\newtheorem{theorem}{Theorem}[section]
\newtheorem{example}{Example}[section] 
\newtheorem{algorithm}{Algorithm}[section]
\newtheorem{definition}{Definition}[section]
\newcommand{\iq}{{\bf i}}
\newcommand{\jq}{{\bf j}}
\newcommand{\kq}{{\bf k}}
\newcommand{\Aq}{{\bf A}}
\newcommand{\Vq}{{\bf V}}
\newcommand{\vq}{{\bf v}}
\newcommand{\Uq}{{\bf U}}
\newcommand{\uq}{{\bf u}}
\newcommand{\Xq}{{\bf X}}
\newcommand{\Fq}{{\bf F}}
\title {\Large{\bf  The Multi-Symplectic Lanczos Algorithm and Its Applications to Color Image Processing\thanks{This paper is supported in part by  the National Natural Science Foundation
of China under grants 11771188,  the Priority Academic Program Development Project (PAPD),
 the Top-notch Academic Programs Project  (No. PPZY2015A013)  of Jiangsu Higher Education
Institutions,  and the Natural Science Foundation of the Jiangsu Higher Educations of China (No.18KJA110001)}
}}
\author{Zhigang Jia\thanks{Corresponding author.  E-mail: zhgjia@jsnu.edu.cn},
Xuan Liu, and
Mei-Xiang Zhao,
 \\
 School of Mathematics and Statistics  \&  Jiangsu Key Laboratory \\ of Education Big Data Science and
Engineering, \\ Jiangsu Normal University,
Xuzhou 221116, China
}
\date{}
\begin{document}
\maketitle

\begin{abstract}
Low-rank approximations of original samples are playing more and more an important role in many recently proposed mathematical models from data science.  A natural and initial requirement is that these representations inherit original structures or properties.  With this aim, we propose a new multi-symplectic  method based on the Lanzcos bidiagonalization to compute the partial singular triplets of JRS-symmetric matrices. These singular triplets can be used to reconstruct optimal low-rank approximations while preserving the intrinsic multi-symmetry.  The augmented Ritz and  harmonic Ritz vectors are used to perform implicit restarting to obtain a satisfactory bidiagonal matrix for calculating the $k$ largest  or  smallest singular triplets, respectively.  We also apply the new multi-symplectic Lanczos  algorithms to  color face recognition and color video compressing and reconstruction.  Numerical experiments indicate their superiority over the state-of-the-art algorithms.
\end{abstract}

{\bf Key words.} 
Multi-symplectic;   Lanczos method;  Low rank approximation; Face recognition; Video compressing

\section{Introduction}\label{s:1}
 The optimal low-rank approximations are the core targets of many recently proposed color image processing models, such as the two dimensional principle component analysis (2DPCA) for color face recognition \cite{jlz17}, the robust PCA for color image inpainting \cite{jns19nla}, and  color video compressing and reconstruction \cite{jns19na}.  For digital images, we can apply the partial singular value decomposition (SVD) based on the  Lanzcos bidiagonalization to reconstruct such low-rank approximations. If the samples have algebraic structures, then the Lanzcos method is expected to be able to preserve these structures which usually reflect the intrinsic relations among different sceneries in the pictures. In this paper, we propose a new multi-symplectic  Lanczos bidiagonalization method to preserve the algebraic multi-symmetry of matrices (which represent color images); and the augmented Ritz and harmonic Ritz vector restarting techniques are implicitly applied  to obtain the  $k$ largest  or  smallest singular triplets, respectively.  This structured Lanczos method will be applied to  the color image processing and its own unique advantages will be indicated by numerical experiments.

 The Lanczos bidiagonalization method \cite{G.H. Golub2013,H. D. Simon2000} has been widely used to compute the partial singular value decomposition of real or complex matrices of large-scale size. Bj\"{o}rck, Grimme and  Van Dooren firstly proposed the implicitly restarted technique based on Lanczos bidiagonalization decomposition (LBD) in \cite{A. Bjorck1994}.   Jia and Niu  \cite{Z. Jia2003} proposed the implicitly restarted LBD with   ``refined shifts''  to calculate  partial largest or smallest singular triplets. In order to increase the accuracy of the calculated  smallest singular triplets, Kokiopoulou, Bekas, and  Gallopoulos \cite{kbg04} proposed the algorithm \texttt{IRLANB}, in which the harmonic Ritz values are used to effectively approximate the smallest singular values.  Later, Baglama, and Reichel \cite{bare05}  proposed the algorithms \texttt{IRLBA(R)} and \texttt{IRLBA(H)}, by which the  $k$ largest  or  smallest singular triplets of  large-scale matrices are obtained from the Ritz vector or harmonic Ritz vector augmented Krylov subspace. In the recent paper \cite{jns19na}, the LBD method was generalized to quaternion matrices and the algorithm \texttt{LANSVDQ}  was developed to compute the $k$ largest singular triplets and was applied to color face recognition,  color video compressing, etc. To the best of our knowledge, there is still no efficient algorithm  to compute  the $k$ smallest singular triplets of quaternion matrices. The multi-symplectic  Lanczos method proposed in this paper can be applied to compute both  the $k$ largest  singular triplets and the $k$ smallest ones  of  large-scale quaternion matrices.

The structure-preserving idea is to keep the (algebraic) symmetries or properties of  continuous or discrete  equations  in the solving process. The aim is to  accurately, stably and efficiently calculate the required solution.  One of the most famous structured matrices in numerous powerful  algorithms is  the (block) Toeplitz matrix, which is generated in the calculation of differential equations from practical applications, 
see, e.g., \cite{Chan1996,chji07,Michael2004,jin10,kns15}.  Another one is the (skew-)Hamiltonian matrix,  which arises in the control theory and on which  a lot of structure-preserving algorithms and perturbation analysis have been developed \cite{benner01,bbmx02,Peter2011,mms16,mmw18}. In this paper,  we treat an important multi-symmetry,  which was firstly proposed in \cite{jwl13}, and then widely studied in quaternion computation  \cite{jwzc18, jns19na, Li2014, Li2016, wei2018} and in color image processing \cite{jlz17,jns19nla, jnw19,ljly19}. This new algebraic symmetry will be preserved  in the proposed multi-symplectic Lanczos method.
The goal is to combine stability with the speed of actual calculations by performing only real operations.

The rest of this paper is structured as follows. In Section \ref{s:prim}, we introduce a new algebraic symmetry  and propose the multi-symplectic transformations. In Section \ref{s:MSL}, we propose a multi-symplectic Lanczos bidiagonalization method for JRS-symmetric matrices and present the  restarted multi-symplectic Lanczos method  to calculate partial largest or smallest singular triplets of JRS-symmetric matrices. In Section \ref{s:cip}, we apply the proposed algorithms to  compute partial singular triplets of quaternion matrices and  apply it to  color image processing. In Section \ref{s:ne}, we present numerical examples to demonstrate the efficiency of the proposed algorithms on computing partial singular triplets. In Section \ref{s:clu}, we summarize the main work of this paper.

\section{Primaries} \label{s:prim}

In this section, we introduce  JRS-symmetric matrices and 
multi-symplectic transformations.

 \begin{definition}[JRS-symmetry \cite{jwl13,jwzc18}] Define three skew-symmetric matrices as   
\begin{subequations}\label{e:JRS}
\begin{align*}
                            &J_n=\left[
                                \begin{array}{cccc}
                                 0 & 0 & -\emph{I}_n & 0 \\
                                  0 & 0 & 0&-\emph{I}_n  \\
                                  \emph{I}_n& 0 & 0 & 0 \\
                                 0 &\emph{I}_n & 0&  0\\
                                \end{array}
                              \right],~
                              R_n=\left[
                                \begin{array}{cccc}
                                 0&-\emph{I}_n & 0 & 0 \\
                                  \emph{I}_n & 0 & 0& 0 \\
                                  0 & 0 & 0 & \emph{I}_n \\
                                 0 & 0& -\emph{I}_n& 0\\
                                \end{array}
                              \right],\\
                               & S_n=\left[
                                \begin{array}{cccc}
                                 0& 0& 0 & -\emph{I}_n \\
                                 0 & 0 &\emph{I}_n  & 0 \\
                                  0 & -\emph{I}_n & 0 &0 \\
                                \emph{I}_n& 0&0& 0\\
                                \end{array}
                              \right].
                              \end{align*}
                              \end{subequations}
 A non-zero matrix $M\in\mathbb{R}^{4m\times 4n}$ is called JRS-symmetric if
\begin{equation}\label{e:msm}
     J_mMJ_n^T= R_mMR_n^T=  S_mMS_n^T=M.
\end{equation}
A matrix  $O\in\mathbb{R}^{4m\times 4n}$ is called multi-symplectic if
  \begin{equation}\label{e:mso}
    OJ_nO^T=J_m,~ OR_nO^T=R_m,~ OS_nO^T=S_m.
 \end{equation}
\end{definition}

A $JRS$-symmetric matrix $M\in\mathbb{R}^{4m\times 4n}$  has the following algebraic structure,
\begin{equation}\label{e:Mstr}
  M:=\left[
  \begin{array}{cccc}
    M^{(0)} & M^{(2)} & M^{(1)} & M^{(3)} \\
    -M^{(2)} & M^{(0)} & M^{(3)} & -M^{(1)} \\
    -M^{(1)} & -M^{(3)} & M^{(0)} & M^{(2)} \\
    -M^{(3)}& M^{(1)} & -M^{(2)} & M^{(0)} \\
  \end{array}
\right], M^{(i)}\in\mathbb{R}^{m\times n}, i=0,1,2,3.
\end{equation}
In fact, a real matrix is $JRS$-symmetric if and only if it is a real counterpart of a quaternion matrix \cite{jwzc18}.  Based on the quaternion representation,  
a color image with the spatial resolution of $m\times n$ pixels  \cite{jns19na} can be represented by an $m\times n$ pure quaternion matrix $\Aq$ in $\mathbb{Q}^{m\times n}$  as follows:
\begin{equation*}
 \Aq_{ij} = R_{ij}\iq + G_{ij}\jq + B_{ij}\kq,\ \  1 \leq i \leq  m, 1 \leq j \leq n,
\end{equation*}
where $R_{ij}$, $G_{ij}$ and $B_{ij}$ are the red, green and blue pixel values at the location $(i,j)$ in the image, respectively, and  $\iq,\jq,\kq$  are three units satisfying $\iq^2 = \jq^2 = \kq^2 = \iq\jq\kq = -1.$
For instance, the color image in Figure \ref{fig:toy} can be stored in the quaternion matrix
$\Aq=R\iq + G\jq + B\kq$,  and thus can be represented by the  JRS-symmetric matrix
\begin{equation*}\label{rc1}
  A^{(\mathcal{S})}:=\left[
  \begin{array}{rrrr}
    0 & G & R & B \\
    -G & 0 & B & -R \\
    -R & -B & 0 & G \\
    -B &R & -G & 0 \\
  \end{array}
\right],
\end{equation*}
where 
$$R=\left[
  \begin{array}{cccc}
 0 & 0 & 1 & 0\\
 0 & 0 & 0 & 1\\
 1 & 0 & 0 & 0\\
 0 & 1 & 0 & 0\\
  \end{array}
\right]\otimes Z,~G=\left[
  \begin{array}{cccc}
 0& 1 & 0 & 0\\
  0 & 0 & 1 &0\\
   0 &0 & 0&1 \\
   1& 0&0&0 \\
  \end{array}
\right] \otimes Z,~B=\left[
  \begin{array}{cccc}
 0& 0 & 0 & 1\\
  1 & 0 & 0 &0\\
   0 &1 & 0&0 \\
   0& 0&1&0 \\
  \end{array}
\right]\otimes Z,$$
represent the  red, green, and blue components, respectively, and $Z$ is  an $n$-by-$n$ matrix with all entries being ones. 
Recall that  Sir W. Hamilton (1805-1865) invented quaternions in 1843 when he attended to extend a complex number to a higher spatial dimension space\cite{Hamilton1969}. 
 In the contemporary era, quaternions have been well known  and widely applied in color image processing \cite{ jns19nla, jnw19, S.C.Pei1997, S.J.Sangwine1996}.     See Section \ref{s:cip} for  more details of the application to color image processing.

 \begin{figure}[!h]
  \begin{center}
  \includegraphics[height=3.7cm, width=3.6cm]{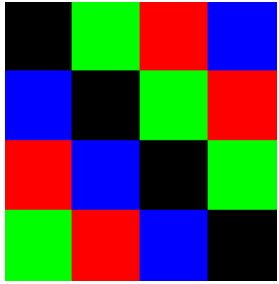}
 \end{center}
 \caption{Color image of size $200\times 200$.}\label{fig:toy}
\end{figure}

The multi-symplectic transformations can preserve the JRS-symmetric structure,
which often associates to the practical relationships between considered elements, 
  during the resolvent process. One example is 
  the orthogonally \emph{$JRS$}-symplectic matrix  $W\in\mathbb{R}^{4n\times 4n}$, proposed in \cite{jwzc18},  having the following form:
\begin{equation*}\label{e:Wstr}
  W:=\left[
  \begin{array}{cccc}
   W^{(0)} & W^{(2)} & W^{(1)} & W^{(3)} \\
    -W^{(2)} & W^{(0)} & W^{(3)} & -W^{(1)} \\
    -W^{(1)} & -W^{(3)} & W^{(0)} & W^{(2)} \\
    -W^{(3)}& W^{(1)} & -W^{(2)} & W^{(0)} \\
  \end{array}
\right], W^{(i)}\in\mathbb{R}^{n\times n}, i=0,1,2,3.
\end{equation*}
Such  a orthogonal and \emph{$JRS$}-symplectic matrix  corresponds to a  unitary quaternion matrix \cite{jwzc18}, and  preserves the $JRS$-symmetric structure in a  similarity transformation.

The following notation will be used frequently.  The superscript $*^{(\mathcal{S})}$ over a matrix $M$, say $M^{(\mathcal{S})}$,   indicates that  $M$ has the algebraic  structure  \eqref{e:Mstr}.  The JRS-symmetric matrix $M^{(\mathcal{S})}$ will be saved by its first  block row  in the practical implementation, i.e., $M^{(\mathcal{S})}:=[M^{(0)},M^{(2)},M^{(1)},M^{(3)}]$.
The block diagonal matrix with diagonal elements, $A_i$'s, is denoted by ${\tt diag}(A_1,A_2,\cdots,A_k)$.

\section{Multi-Symplectic Lanczos Method}\label{s:MSL}
In this section, we present the multi-symplectic Lanczos method for the computation of the $k~(k< n)$ largest and smallest singular triplets of a $4n$-by-$4n$ JRS-symmetric matrices. The core work is to calculate a structured bidiagonalization of the form 
\begin{equation}\label{e:BS}
B_{k}^{(\mathcal{S})}={\tt diag}( B_{k}, B_{k}, B_{k}, B_{k}),
     ~B_{k}=\left[
          \begin{array}{ccccc}
            \alpha_{1} & \beta_{1} & 0 & \cdots & 0 \\
            0 & \alpha_{2} & \beta_{2}  & \ddots & 0 \\
            \vdots & \ddots & \ddots & \ddots& \vdots \\
            \vdots & \ddots & \ddots & \ddots & \beta_{k-1}  \\
            0 & \cdots & \cdots & 0 & \alpha_{k} \\
          \end{array}
        \right]\in\mathbb{R}^{k\times k}, 
        \end{equation}
    rather than a bidiagonal matrix of size $4k\times 4k$ by the classic Lanczos method.  
   
\subsection{Multi-Symplectic Lanczos Bidiagonalization}\label{s:MSLBi}
The multi-symplectic Lanczos bidigonalization method aims to compute  the partial bidiagonalization of a JRS-symmetric matrix $M\in \mathbb{R}^{4m\times 4n}$.

  At first, we propose the  bidigonalization of a JRS-symmetric matrix $M\in \mathbb{R}^{4m\times 4n}$.  
\begin{theorem}[Multi-Symplectic Bidiagonalization Decomposition]\label{t:MSB}
If matrix $M\in\mathbb{R}^{4m\times 4n}$ is \emph{$JRS$}-symmetric. Then there exist two orthogonally multi-symplectic matrices $P\in\mathbb{R}^{4n\times4n}$ and $Q\in\mathbb{R}^{4m\times4m}$ such that
 \begin{equation}\label{rpld}
    M=Q\left[
      \begin{array}{cccc}
        B & 0 & 0 & 0 \\
        0 & B & 0 & 0 \\
        0 & 0 & B & 0 \\
        0 & 0 & 0 & B \\
      \end{array}
    \right]P^T,
     \end{equation}
where
$B\in\mathbb{R}^{m\times n}$ is a real bidiagonal matrix.
\end{theorem}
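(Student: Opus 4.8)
The plan is to transport the problem to quaternion matrices, solve it there by a classical Householder bidiagonalization, and then pull the result back through the real representation. First I would recall from \cite{jwzc18} that the map $\phi$ sending a quaternion matrix to its real counterpart of the form \eqref{e:Mstr} is an injective homomorphism of real algebras: it is $\R$-linear, satisfies $\phi(\Aq\Bq)=\phi(\Aq)\phi(\Bq)$ and $\phi(\Aq^*)=\phi(\Aq)^T$, and its image is precisely the set of JRS-symmetric matrices. In particular a quaternion matrix $\Uq$ is unitary if and only if $\phi(\Uq)$ is orthogonal, and then $\phi(\Uq)$ additionally satisfies \eqref{e:mso}, i.e. it is orthogonally multi-symplectic. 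Since $M$ is JRS-symmetric, $M=\phi(\Aq)$ for a unique $\Aq\in\Qq^{m\times n}$.

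Next I would establish the quaternion bidiagonalization: there exist unitary $\Uq\in\Qq^{m\times m}$ and $\Vq\in\Qq^{n\times n}$ with $\Uq^*\Aq\Vq=B$ for a real bidiagonal $B\in\R^{m\times n}$. This is the Golub--Kahan scheme carried out over $\Qq$: one alternately applies a quaternion Householder reflector on the left to annihilate the entries below the current diagonal position, and one on the right to annihilate the entries to the right of the current superdiagonal position. The only point beyond the real case is to force the surviving diagonal and superdiagonal entries to be real (indeed nonnegative): after each reflection the leading surviving entry is a single quaternion scalar $q$, and multiplying the relevant row (or column) of the accumulated unitary factor by the unit quaternion $\bar q/|q|$ replaces it by $|q|$ without touching the band already created. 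Iterating $\min(m,n)$ times produces $B\in\R^{m\times n}$ bidiagonal.

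Finally I would apply $\phi$ to $\Aq=\Uq B\Vq^*$. Using that $\phi$ is a homomorphism with $\phi(\Vq^*)=\phi(\Vq)^T$, we get
\[
M=\phi(\Aq)=\phi(\Uq)\,\phi(B)\,\phi(\Vq)^T=:Q\,\phi(B)\,P^T,
\]
where $Q:=\phi(\Uq)$ and $P:=\phi(\Vq)$ are orthogonally multi-symplectic by the first step. Since $B$ is real, its quaternion components $B^{(1)},B^{(2)},B^{(3)}$ vanish, so the structure \eqref{e:Mstr} of $\phi(B)$ collapses to ${\tt diag}(B,B,B,B)$, which is exactly \eqref{rpld}.

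The main obstacle is the realness of the middle factor: a quaternion Householder step leaves a genuine quaternion scalar in the pivot position, and one must absorb its phase into a diagonal unitary factor while verifying this does not reintroduce nonzeros into the already-reduced part of $B$. A fully real alternative that fits the paper's ``real arithmetic only'' philosophy is to build orthogonally multi-symplectic block-Householder reflectors directly on $\R^{4m}$ --- each having the JRS-structured form \eqref{e:Mstr} --- and run the Golub--Kahan elimination without invoking quaternions at all; there the bookkeeping of the $4\times4$ block band structure under such reflectors is the crux, and one still has to check each reflector preserves JRS-symmetry.
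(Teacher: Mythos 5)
Your proposal is correct, but it takes a genuinely different route from the paper. The paper proves the theorem by induction on the dimension, staying entirely in the real $4\times4$-block world: for the inductive step it constructs explicit generalized Givens transformations (the $12\times12$ blocked rotations $G(i)$ from \cite{jwl13}, which are orthogonally multi-symplectic by construction) to annihilate the first column below position $(1,1)$ and the first row beyond position $(1,2)$ of the JRS-symmetric matrix, obtaining real pivots $\gamma_{11},\gamma_{12}\geq0$; it then invokes the inductive hypothesis on the $4(m-1)\times4(n-1)$ JRS-symmetric trailing block and embeds the resulting factors back into orthogonally multi-symplectic $4m\times4m$ and $4n\times4n$ matrices. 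You instead factor through the algebra isomorphism $\phi$ onto quaternion matrices, do classical Golub--Kahan bidiagonalization over $\Qq$ with Householder reflectors and diagonal phase absorption to obtain $\Uq^{*}\Aq\Vq=B$ real bidiagonal, and pull back via $\phi$. Both are valid: your argument is conceptually shorter once the standard facts about $\phi$ (algebra homomorphism, $\phi(\Aq^{*})=\phi(\Aq)^T$, unitary $\leftrightarrow$ orthogonal multi-symplectic) and about quaternion Householder reflectors \cite{Li2016} are taken for granted, and it makes explicit that the paper's generalized Givens rotations are nothing but quaternion Givens rotations written in the $4\times4$ real representation. The paper's route avoids any reference to quaternion arithmetic and yields a directly implementable real algorithm, which is in keeping with its real-structure-preserving theme; it also exhibits nonnegativity of the bidiagonal entries as an immediate by-product of the rotation construction, whereas in your version that requires the separate phase-absorption bookkeeping you flag at the end.
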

\begin{proof} We prove the theorem by induction on the dimension. 
Clearly, the theorem holds  for $m=n=1$. Assume that the result is true for  any $4(m-1)\times 4(n-1)$ \emph{$JRS$}-symmetric matrix.

For any $JRS$-symmetric matrix $M\in \mathbb{R}^{4m\times 4n}$, its four distant blocks are partitioned as 
 \begin{equation*}M^{(0)}=\left[\begin{array}{c|ccc}
                                 \omega^{(0)}_{11} &  \omega^{(0)}_{12}  &\Omega^{(0)}_{13} \\ \hline
                                  \omega^{(0)}_{21} &  \omega^{(0)}_{22} &\Omega^{(0)}_{23}\\
                                     \Omega^{(0)}_{31} & \Omega^{(0)}_{32} & \Omega^{(0)}_{33}\\
                             \end{array}
                             \right],\ \ \ \ \ \
M^{(1)}=\left[\begin{array}{c|ccc}
                                \omega^{(1)}_{11} &  \omega^{(1)}_{12}&  \Omega^{(1)}_{13}   \\ \hline
                                  \omega^{(1)}_{21} &  \omega^{(1)}_{22} &  \Omega^{(1)}_{23} \\
                                    \Omega^{(1)}_{31} &  \Omega^{(1)}_{32} &  \Omega^{(1)}_{33} \\
                              \end{array}
                             \right],
 \end{equation*}
 \begin{equation*}M^{(2)}=\left[\begin{array}{c|ccc}
                                  \omega^{(2)}_{11} &  \omega^{(2)}_{12}&  \Omega^{(2)}_{13} \\ \hline
                                  \omega^{(2)}_{21} &  \omega^{(2)}_{22} &  \Omega^{(2)}_{23} \\
                                    \Omega^{(2)}_{31} &  \Omega^{(2)}_{32} &  \Omega^{(2)}_{33} \\
                                \end{array}
                             \right],\ \ \ \ \ \
M^{(3)}=\left[\begin{array}{c|ccc}
                                   \omega^{(3)}_{11} &  \omega^{(3)}_{12}&  \Omega^{(3)}_{13}   \\ \hline
                                  \omega^{(3)}_{21} &  \omega^{(3)}_{22} &  \Omega^{(3)}_{23} \\
                                    \Omega^{(3)}_{31} &  \Omega^{(3)}_{32} &  \Omega^{(3)}_{33} \\
                               \end{array}
                             \right],
 \end{equation*}
 in which $\omega^{(t)}_{ij}\in\mathbb{R}$,   $\Omega^{(t)}_{33} \in \mathbb{R}^{(m-2)\times (n-2)}$, and $\Omega^{(t)}_{i3}, (\Omega^{(t)}_{3j})^T\in\mathbb{R}^{m-2}$. There exist two  muti-symplectic matrices, denoted by $P_1\in\mathbb{R}^{4n\times 4n}$ and $Q_1\in\mathbb{R}^{4m\times 4m}$ such that 
\begin{align*}
\tilde{M}:=Q_1MP_1^T :=\left[
              \begin{array}{cccc}
               \tilde{{M}}^{(0)}& \tilde{{M}}^{(2)}& \tilde{{M}}^{(1)}& \tilde{{M}}^{(3)}\\
               -\tilde{{M}}^{(2)}&  \tilde{{M}}^{(0)}& \tilde{{M}}^{(3)} & -\tilde{{M}}^{(1)} \\
                -\tilde{{M}}^{(1)}& -\tilde{{M}}^{(3)} & \tilde{{M}}^{(0)}& \tilde{{M}}^{(2)} \\
               - \tilde{{M}}^{(3)}&\tilde{{M}}^{(1)} & -\tilde{{M}}^{(2)} &  \tilde{{M}}^{(0)}\\
              \end{array}
            \right]
\end{align*}
 where
\begin{equation*}\tilde{{M}}^{(0)}=\left[\begin{array}{c|cc}
                                 \gamma_{11} &  \gamma_{12}& 0  \\ \hline
                                0&  \tilde{{\omega}}^{(0)}_{22} & \tilde{ {\Omega}}^{(0)}_{23} \\
                                   0& \tilde{{ \Omega}}^{(0)}_{32} &  \tilde{{\Omega}}^{(0)}_{33}
                             \end{array}
                             \right],\ \ \
\tilde{{M}}^{(1)}=\left[\begin{array}{c|cc}
                                 0 &  0& 0   \\ \hline
                                  0 &  \tilde{{\omega}}^{(1)}_{22} &  \tilde{{\Omega}}^{(1)}_{23} \\
                                   0 &  \tilde{{\Omega}}^{(1)}_{32} &   \tilde{{\Omega}}^{(1)}_{33}    
                               \end{array}
                             \right],
 \end{equation*}
\begin{equation*}\tilde{{M}}^{(2)}=\left[\begin{array}{c|cc}
                                 0 &  0& 0  \\ \hline
                                0&  \tilde{{\omega}}^{(2)}_{22} & \tilde{ {\Omega}}^{(2)}_{23} \\
                                   0& \tilde{{ \Omega}}^{(2)}_{32} &  \tilde{{\Omega}}^{(2)}_{33}
                             \end{array}
                             \right],\ \ \
\tilde{{M}}^{(3)}=\left[\begin{array}{c|cc}
                                 0 &  0& 0   \\ \hline
                                  0 &  \tilde{{\omega}}^{(3)}_{22} &  \tilde{{\Omega}}^{(3)}_{23} \\
                                   0 &  \tilde{{\Omega}}^{(3)}_{32} &   \tilde{{\Omega}}^{(3)}_{33}    
                               \end{array}
                             \right].
 \end{equation*}
Here, $ \gamma_{11}, \gamma_{12}\ge 0$, and  $P_1$ and $Q_1$ are products of a series of generalized Givens transformations defined as in \cite{jwl13},
\begin{equation*}\label{e}
G(i):=\left[\begin{smallmatrix}
\emph{I}_{i-1},&           0,& 0,& 0,&           0,& 0,& 0,&           0,& 0,& 0,&            0,& 0\\
0,&  \cos\alpha_0,& 0,& 0,&  \cos\alpha_2,& 0,& 0,&   \cos\alpha_1,& 0,& 0,&   \cos\alpha_3,& 0\\
0,&           0,& \emph{I}_{m-i},& 0,&           0,& 0,& 0,&           0,& 0,& 0,&            0,& 0\\
0,&           0,& 0,& \emph{I}_{i-1},&           0,& 0,& 0,&           0,& 0,& 0,&            0,& 0\\
0,& -\cos\alpha_2,& 0,& 0,&  \cos\alpha_0,& 0,& 0,&  \cos\alpha_3,& 0,& 0,&   -\cos\alpha_1,& 0\\
0,&           0,& 0,& 0,&           0,& \emph{I}_{m-i},& 0,&           0,& 0,& 0,&            0,& 0\\
0,&           0,& 0,& 0,&           0,& 0,& \emph{I}_{i-1},&           0,& 0,& 0,&            0,& 0\\
0,&  -\cos\alpha_1,& 0,& 0,& -\cos\alpha_3,& 0,& 0,&  \cos\alpha_0,& 0,& 0,&   \cos\alpha_2,& 0\\
0,&           0,& 0,& 0,&           0,& 0,& 0,&           0,& \emph{I}_{m-i},& 0,&            0,& 0\\
0,&           0,& 0,& 0,&           0,& 0,& 0,&           0,& 0,& \emph{I}_{i-1},&            0,& 0\\
0,& -\cos\alpha_3,& 0,& 0,&   \cos\alpha_1,& 0,& 0,& -\cos\alpha_2,& 0,& 0,&   \cos\alpha_0,& 0\\
0,&           0,& 0,& 0,&           0,& 0,& 0,&           0,& 0,& 0,&            0,& \emph{I}_{m-i}\\
\end{smallmatrix}\right],
\end{equation*}
where $\alpha_0, \alpha_1,\alpha_2, \alpha_3\in[-\pi/2,\pi/2)$ and $\cos^2\alpha_0+\cos^2\alpha_1+\cos^2\alpha_2+\cos^2\alpha_3=1$.

Denote $M'$ as the submatrix of $\tilde{{M}}$ by deleting the $1,m+1,2m+1,3m+1$ rows and the $1,n+1,2n+1,3n+1$ columns. Clearly,  $M'\in \mathbb{R}^{4(m-1)\times4(n-1)}$ is $JRS$-symmetric. By the introduction assumption, 
there exist two orthogonal $JRS$-symplectic matrices,
$$
  P'=\left[\begin{array}{cccc}
  P'^{(0)} & P'^{(2)} & P'^{(1)}& P'^{(3)} \\
    -P'^{(2)} & P'^{(0)} & P'^{(3)} & -P'^{(1)} \\
    -P'^{(1)}& -P'^{(3)} & P'^{(0)} & P'^{(2)} \\
    -P'^{(3)} & P'^{(1)} & -P'^{(2)} & P'^{(0)} \\
 \end{array}\right]$$
and
$$Q'=\left
  [\begin{array}{cccc}
     Q'^{(0)} & Q'^{(2)} & Q'^{(1)}& Q'^{(3)} \\
    -Q'^{(2)} & Q'^{(0)} & Q'^{(3)} & -Q'^{(1)} \\
    -Q'^{(1)}& -Q'^{(3)} & Q'^{(0)} & Q'^{(2)} \\
    -Q'^{(3)} & Q'^{(1)} & -Q'^{(2)} & Q'^{(0)} \\
  \end{array}\right]
$$
 such that\\
 \begin{equation}\label{subM}
(Q')^{T}M' P'= \left[
      \begin{array}{cccc}
        B' & 0 & 0 & 0 \\
        0 & B' & 0 & 0 \\
        0 & 0 & B'& 0 \\
        0 & 0 & 0 & B' \\
      \end{array}
    \right],
 \end{equation}
in which  $B'\in \mathbb{R}^{(m-1)\times (n-1)}$ is a bidiagonal matrix.
Define 
$$P_2^{(t)}=\left[\begin{array}{cccc}
               1 & 0 \\
                0 & P'^{(t)}\\
              \end{array}\right]\in\mathbb{R}^{n\times n}~\text{and}~Q_2^{(t)}=\left[\begin{array}{cc}
               1 & 0 \\
                0 & Q'^{(t)}\\
              \end{array}\right]\in\mathbb{R}^{m\times m}, ~t=0,1,2,3.$$
 We structure two orthogonal multi-symplectic matrices
\begin{equation*}
P_2:=\left[
             \begin{array}{cccc}
             P_2^{(0)} & P_2^{(2)}&P_2^{(1)} & P_2^{(3)} \\
                -P_2^{(2)}& P_2^{(0)} &P_2^{(3)} & -P_2^{(1)} \\
                -P_2^{(1)}& -P_2^{(3)}&P_2^{(0)} & P_2^{(2)} \\
                -P_2^{(3)}& P_2^{(1)} &-P_2^{(2)} & P_2^{(0)} \\
              \end{array}
            \right]
            \end{equation*}
       and
       \begin{equation*}
       Q_2:=\left[
              \begin{array}{cccc}
               Q_2^{(0)} & Q_2^{(2)}&Q_2^{(1)} & Q_2^{(3)} \\
                -Q_2^{(2)}& Q_2^{(0)} &Q_2^{(3)} & -Q_2^{(1)} \\
                -Q_2^{(1)}& -Q_2^{(3)}&Q_2^{(0)} & Q_2^{(2)} \\
                -Q_2^{(3)}& Q_2^{(1)} &-Q_2^{(2)} & Q_2^{(0)} \\
              \end{array}
            \right].
\end{equation*}
Then by defining $P=P_1P_2$ and $Q=Q_1Q_2$, we have
\begin{equation}\label{l}
   Q^TMP=(Q_1Q_2)^{T}M P_1P_2= \left[
      \begin{array}{cccc}
        B & 0 & 0 & 0 \\
        0 & B& 0 & 0 \\
        0 & 0 & B & 0 \\
        0 & 0 & 0 & B \\
      \end{array}
    \right],
\end{equation}
where 
$$B=\left[\begin{array}{c|c}\gamma_{11}&[\gamma_{12}~0]\\ \hline
                                                0 & B'
\end{array}\right]\in\mathbb{R}^{m\times n}$$
 is a bidiagonal matrix.   $P$ and $Q$ are surely orthogonally multi-symplectic, since they are products of two orthogonally multi-symplectic matrices. 
\end{proof}

Next, we propose the multi-symplectic Lanczos bidigonalization (MLB) method  to compute  the partial bidiagonalization of a JRS-symmetric matrix $M\in \mathbb{R}^{4m\times 4n}$.  
 \begin{theorem}[Partial  Bidiagonalization Decomposition]\label{t:PMSB} Suppose that $M^{(\mathcal{S})}\in \mathbb{R}^{4m\times 4n}$ is a JRS-symmetric matrix   of the form \eqref{e:Mstr} and $1\le k\le \min(m,n)$. There exist two multi-symmplectic matrices $P_k^{(\mathcal{S})}\in\mathbb{R}^{4n\times 4k}$ and $Q_k^{(\mathcal{S})}\in\mathbb{R}^{4m\times 4k}$ with orthogonal columns, such that 
 \begin{equation}\label{rpld.pa}
  M^{(\mathcal{S})}P_{k}^{(\mathcal{S})}=Q_{k}^{(\mathcal{S})}B_{k}^{(\mathcal{S})},~ (M^{(\mathcal{S})})^{T}Q_{k}^{(\mathcal{S})}=P_{k}^{(\mathcal{S})}B_{k}^{(\mathcal{S})}+r_{k}^{(\mathcal{S})}(e_{k}^{(\mathcal{S})})^{T},
\end{equation}
where $B_{k}^{(\mathcal{S})}$ is of the form \eqref{e:BS} with $\alpha_{j}>0$ and $\beta_{j}\geq0 $,    $r_{k}^{(\mathcal{S})}=\texttt{diag}(r_{k},r_{k},r_{k},r_{k})$ $\in \mathbb{R}^{4n\times 4}$ is a ``residual vector'' satisfying $(P_{k}^{(\mathcal{S})})^T r_{k}^{(\mathcal{S})}=0$, and $e_{k}^{(\mathcal{S})}=\texttt{diag}(e_{k},e_{k},e_{k},e_{k})$ $\in \mathbb{R}^{4m\times 4}$ with $e_{k}$ denoting  the $k$th column of the identity matrix. 
\end{theorem}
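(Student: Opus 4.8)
Relations \eqref{rpld.pa} are exactly the $k$-step Lanczos (Golub--Kahan type) bidiagonalization of the quaternion matrix $\mathcal{M}\in\Qq^{m\times n}$ whose real counterpart in the sense of \eqref{e:Mstr} is $M^{(\mathcal{S})}$, written out in multi-symplectic real arithmetic; I would prove the result by induction on $k$, generating the Lanczos vectors and checking that each step is a multi-symplectic operation, so that the accumulated $P_k^{(\mathcal{S})},Q_k^{(\mathcal{S})}$ keep the algebraic form \eqref{e:Mstr} and $B_k^{(\mathcal{S})}$ stays of the block form \eqref{e:BS}. (A shorter route is available via Theorem \ref{t:MSB}: write $M^{(\mathcal{S})}=Q\,\texttt{diag}(B,B,B,B)\,P^{T}$, let $B_k$ be the leading $k\times k$ principal submatrix of the bidiagonal $B$, and let $P_k^{(\mathcal{S})},Q_k^{(\mathcal{S})}$ collect the columns of $P,Q$ matched to $B_k$; since $B$ is bidiagonal and $k\le\min(m,n)$, the only column of $P$ that leaks into $(M^{(\mathcal{S})})^{T}Q_k^{(\mathcal{S})}$ beyond those already in $P_k^{(\mathcal{S})}$ is column $k{+}1$, and $\beta_k$ times it supplies the residual $r_k^{(\mathcal{S})}$. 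I would present this as the formal proof and keep the inductive construction because it is what the MLB algorithm actually performs.)

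Working in $\Qq$: choose a unit $p_1\in\Qq^{n}$ with $\mathcal{M}p_1\neq0$ (possible as $\mathcal{M}\neq0$), put $\alpha_1=\|\mathcal{M}p_1\|_2>0$, $q_1=\mathcal{M}p_1/\alpha_1$ and $\widehat{p}=\mathcal{M}^{*}q_1-\alpha_1 p_1$; since $\langle p_1,\widehat{p}\rangle=\langle\mathcal{M}p_1,q_1\rangle-\alpha_1=0$, setting $\beta_1=\|\widehat{p}\|_2\ge0$, $p_2=\widehat{p}/\beta_1$ establishes the order-$1$ relations. For the passage $k\to k{+}1$ one forms $\widehat{q}=\mathcal{M}p_{k+1}-\beta_k q_k$, $\alpha_{k+1}=\|\widehat{q}\|_2$, $q_{k+1}=\widehat{q}/\alpha_{k+1}$, then $\widehat{p}=\mathcal{M}^{*}q_{k+1}-\alpha_{k+1}p_{k+1}$, $\beta_{k+1}=\|\widehat{p}\|_2$, $p_{k+2}=\widehat{p}/\beta_{k+1}$. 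Two things then have to be verified. First, structure preservation: $\mathcal{M}$ and $\mathcal{M}^{*}$ send quaternion vectors to quaternion vectors, the Gram--Schmidt subtractions stay within $\Qq^{n}$, resp. $\Qq^{m}$, and the normalizations use the \emph{real} scalars $1/\alpha_j,1/\beta_j$; transported through \eqref{e:Mstr}, this says precisely that every appended column-block retains the form \eqref{e:Mstr}, that $P_k^{(\mathcal{S})},Q_k^{(\mathcal{S})}$ have orthonormal columns, and that the scalar data assemble into a single real bidiagonal $B_k$ repeated four times, i.e. $B_k^{(\mathcal{S})}$ of the form \eqref{e:BS}. Here the fact recalled after \eqref{e:Mstr} -- a real matrix is JRS-symmetric iff it is the real counterpart of a quaternion matrix -- together with \eqref{e:mso} does the bookkeeping. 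Second, full orthogonality: $\widehat{q}\perp q_1,\dots,q_k$, not merely $\widehat{q}\perp q_k$.

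The second point is the usual Lanczos short-recurrence phenomenon and is where the only real computation lies. For $j\le k$ one has $\langle q_j,\mathcal{M}p_{k+1}\rangle=\langle\mathcal{M}^{*}q_j,p_{k+1}\rangle$; substituting the order-$k$ relations, namely $\mathcal{M}^{*}q_j=\alpha_j p_j+\beta_j p_{j+1}$ for $j<k$ and $\mathcal{M}^{*}q_k=\alpha_k p_k+\beta_k p_{k+1}$ for $j=k$, and using the orthonormality of $p_1,\dots,p_{k+1}$, gives $\langle q_j,\mathcal{M}p_{k+1}\rangle=0$ for $j<k$ and $=\beta_k$ for $j=k$, hence $\langle q_j,\widehat{q}\rangle=0$ for every $j\le k$. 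The mirror computation with $\mathcal{M}\leftrightarrow\mathcal{M}^{*}$ and $p\leftrightarrow q$ yields $\widehat{p}\perp p_1,\dots,p_{k+1}$. Combining the two points closes the induction with $\alpha_{k+1}>0$ and $\beta_{k+1}\ge0$, the structured residual $r_{k+1}^{(\mathcal{S})}$ encoding $\beta_{k+1}p_{k+2}$ and orthogonal to all columns of $P_{k+1}^{(\mathcal{S})}$.

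The main point to get right, rather than a deep obstacle, is that the construction must \emph{simultaneously} preserve the multi-symplectic structure and deliver full (not merely local) orthogonality; once the correspondence between quaternion operations on $\mathcal{M}$ and multi-symplectic real operations on $M^{(\mathcal{S})}$ (Theorem \ref{t:MSB} and the algebra of \eqref{e:msm}--\eqref{e:mso}) is set up carefully, both become mechanical. The only remaining subtlety is a possible early breakdown, $\alpha_j=0$ or $\beta_j=0$ for some $j\le k$, which happens exactly when a reducing subspace of $\mathcal{M}\mathcal{M}^{*}$ (resp. $\mathcal{M}^{*}\mathcal{M}$) has been exhausted; since only existence is claimed, one restarts the stalled recurrence from any structured unit block orthogonal to those already built, which disturbs neither \eqref{rpld.pa} nor the requirement $\alpha_j>0$.
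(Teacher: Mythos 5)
Your proposal is correct, and it actually supplies more than the paper does: the paper's proof of this theorem is a single sentence (``similar way to that of Theorem~\ref{t:MSB}''), which points to the Givens-based induction on dimension and silently defers the real content to Algorithm~1. You give two concrete routes. The inductive Lanczos short-recurrence is precisely what Algorithm~1 (the MSL bidiagonalization) performs, and your verification of \emph{global} orthogonality of $\widehat q$ against all earlier $q_j$ via the three-term relations $\mathcal{M}^{*}q_j=\alpha_j p_j+\beta_j p_{j+1}$ is the nontrivial step that the paper never writes out. Your parenthetical truncation route --- take the full factorization $M^{(\mathcal{S})}=Q\,\mathrm{diag}(B,B,B,B)\,P^{T}$ from Theorem~\ref{t:MSB}, keep the first $k$ columns of each diagonal block of $P$ and $Q$, and observe that only column $k{+}1$ of $P$ leaks into $(M^{(\mathcal{S})})^{T}Q_k^{(\mathcal{S})}$, supplying $r_k^{(\mathcal{S})}=\beta_k p_{k+1}^{(\mathcal{S})}$ --- is a cleaner existence argument and is probably what the paper's one-liner intends. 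One small but worthwhile observation: both of your derivations produce $(M^{(\mathcal{S})})^{T}Q_k^{(\mathcal{S})}=P_k^{(\mathcal{S})}(B_k^{(\mathcal{S})})^{T}+r_k^{(\mathcal{S})}(e_k^{(\mathcal{S})})^{T}$ with a \emph{transpose} on $B_k^{(\mathcal{S})}$. The statement~\eqref{rpld.pa} as printed omits it, but the transposed form is the one consistent with the derivation of~\eqref{ATAPLT} and with the rest of Section~\ref{s:MSL}, so you have proved the corrected statement (and the dimension of $e_k^{(\mathcal{S})}$ should read $\mathbb{R}^{4k\times 4}$, not $\mathbb{R}^{4m\times 4}$). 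Your remark about early breakdown and the necessary caveat on $\alpha_j>0$ is also right; the paper simply does not address it.
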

\begin{proof}
With  the structure-preserving transformations in hand, we can complete the proof in the similar way to that of Theorem \ref{t:MSB}. 
\end{proof}

The pseudo-code of the MLB method is proposed in Algorithm 1, in which $\|\cdot\|$ denotes the $Frobenius$ $norm$.
To save  storage, we use the notation $M^{(\mathcal{S})}$, $P_k^{(\mathcal{S})}$ and $Q_k^{(\mathcal{S})}$,  but only update and save their first block rows. 

Notice that we need the  reorthogonalization with preserving multi-symplectic structure in lines 5 and 10  since the orthogonality of computed columns is often lost during computation. If without breakdown, we compute matrices $P_k^{(\mathcal{S})}$, $Q_k^{(\mathcal{S})}$ and $B_k^{(\mathcal{S})}$, satisfying Theorem \ref{t:PMSB} .
The output matrix $B_k$ is a bidiagonal matrix as in \eqref{e:BS},   $\alpha_{j}>0,\beta_{j}\ge 0, 1 \leq j \leq k$. 

Due to the limitation of computing speed and memory, the value of $k$ should not be too large  in practical calculation. 
However,  too small $k$ can not always guarantee the enough accuracy and the convergence of the multi-symplectic Lanczos bidiagonalization algorithm. A useful technique is restarting, which is applied to make the approximate singular triplets converge at the preset accuracy. At present, there are many available restarted strategies.  The implicitly restarting technology proposed by Sorensen \cite{Sorensen} for eigenvalues is one of the most successful restarting technologies. We will propose the details of applying this technique in Section \ref{s:augritz}. 

\begin{algorithm} \label{code:SPLB}
{{\bf Algorithm 1.} The Multi-Symplectic Lanczos (MSL) Bidiagonalization }
\begin{enumerate}
              \item[Input:\ ] $M^{(\mathcal{S})}\in \mathbb{R}^{m \times 4n}$: \ a $JRS$-symmetric matrix,
              \item[] $p_{1}^{(\mathcal{S})} \in \mathbb{R}^{n\times 4}$:\ ``initial vector'' satisfies $\|p_{1}^{(\mathcal{S})}\|=1$,
              \item[] $k$:\ number of bidiagonalization steps.
              \item[Output:] $P_{k}^{(\mathcal{S})} \in\mathbb{R}^{n\times 4k}$:\ matrix with orthonormal columns,
              \item[] $Q_{k}^{(\mathcal{S})}\in\mathbb{R}^{m\times 4k}$:\ matrix with orthonormal columns,
              \item[] $B_{k}\in \mathbb{R}^{k\times k}$:\ upper bidiagonal matrix with entries $\alpha_{j},\beta_{j}$,
              \item[] $r_{k}^{(\mathcal{S})} \in \mathbb{R}^{n \times 4}$:\ residual vector.
\item $P_{1}^{(\mathcal{S})}:=p_{1}^{(\mathcal{S})};q_{1}^{(\mathcal{S})}:=Mp_{1}^{(\mathcal{S})}$;
\item $\alpha_{1}:=\parallel q_{1}^{(\mathcal{S})} \parallel;  q_{1}^{(\mathcal{S})}:=q_{1}^{(\mathcal{S})}/\alpha_{1}; Q_{1}^{(\mathcal{S})}:=q_{1}^{(\mathcal{S})};$
 \item    for {$j=1:k$}
\item \quad $r_{j}^{(\mathcal{S})}:=(M^{(\mathcal{S})})^{T}q_{j}^{(\mathcal{S})}-\alpha_{j}p_{j}^{(\mathcal{S})};$
\item \quad  Reorthogonalization: $r_{j}^{(\mathcal{S})}:=r_{j}^{(\mathcal{S})}-P_{j}^{(\mathcal{S})}((P_{j}^{(\mathcal{S})})^{T}r_{j}^{(\mathcal{S})});$
 \item \quad if  {$j<k,$}
\item \quad\quad $\beta_{j}:=\parallel r_{j}^{(\mathcal{S})} \parallel; \ p_{j+1}^{(\mathcal{S})}:=r_{j}^{(\mathcal{S})}/\beta_{j}$; 
\item  \quad\quad $P_{j+1}^{(\mathcal{S})}:$=$[P_{j}^{(\mathcal{S})}(:,1:j),p_{j+1}^{(\mathcal{S})}(:,1),P_{j}^{(\mathcal{S})}(:,j+1:2j),$ $p_{j+1}^{(\mathcal{S})}(: ,2)$, $P_{j}^{(\mathcal{S})}(:,2j+1:3j),$  $p_{j+1}^{(\mathcal{S})}(:,3),P_{j}^{(\mathcal{S})}(:,3j+1:4j),p_{j+1}^{(\mathcal{S})}(:,4)];$
\item \quad\quad $q_{j+1}^{(\mathcal{S})}:=M^{(\mathcal{S})}p_{j+1}^{(\mathcal{S})}-\beta_{j}q_{j}^{(\mathcal{S})}$;
\item \quad\quad Reorthogonalization:\ $q_{j+1}^{(\mathcal{S})}:=q_{j+1}^{(\mathcal{S})}-Q_{j}^{(\mathcal{S})}(Q_{j}^{(\mathcal{S})})^{T}q_{j+1}^{(\mathcal{S})}$;
\item \quad\quad  $\alpha_{j+1}:=\|q_{j+1}^{(\mathcal{S})}\|;\ q_{j+1}^{(\mathcal{S})}:=q_{j+1}^{(\mathcal{S})}/\alpha_{j+1};$\ 
\item \quad\quad $Q_{j+1}^{(\mathcal{S})}:=[Q_{j}^{(\mathcal{S})}(:,1:j),$ $q_{j+1}^{(\mathcal{S})}(:,1),$ $Q_{j}^{(\mathcal{S})}(:,j+1:2j),$ $q_{j+1}^{(\mathcal{S})}(:,2),$  $Q_{j}^{(\mathcal{S})}(:,2j+1:3j),$ $q_{j+1}^{(\mathcal{S})}(:,3),$ $Q_{j}^{(\mathcal{S})}(:,3j+1:4j),q_{j+1}^{(\mathcal{S})}(:,4)]; $
\item \quad end
\item end
\end{enumerate}
\end{algorithm}

\subsection{Computation of $k$  Largest Singular Triplets}\label{s:augritz}
Let the partial Lanczos bidiagonalization \eqref{rpld.pa} be available, and assume that we are interested in determining the $t$ largest singular triplets of $M\in\mathbb{R}^{4m\times 4n}$, where $t\leq k<\min(m,n)$.  
Let $\{\sigma_{j},u_{j},v_{j}\}$, $ j=1,\dots, k $,  with 
\begin{equation*}\label{svd.Bk.a}
  \sigma_{1}\geq \sigma_{2}\geq \cdots \geq \sigma_{k} \geq 0,
\end{equation*}
be the singular triplets of $B_{k}$,
i.e., 
\begin{equation}\label{svd.Bk}
  B_{k}v_{j}=u_{j}\sigma_{j}, \ B_{k}^{T}u_{j}=v_{j}\sigma_{j}.
\end{equation}
To obtain the singular triplets of $M^{(\mathcal{S})}$, we need firstly construct three diagonal or block diagonal  matrices as follows:
\begin{equation}\label{appro.rA.a}
  \sigma_{j}^{(\mathcal{S})}=\texttt{diag}(\sigma_{j},\sigma_{j},\sigma_{j},\sigma_{j}),
  ~ u_{j}^{(\mathcal{S})}=\texttt{diag}(u_{j},u_{j},u_{j},u_{j}),~
   v_{j}^{(\mathcal{S})}=\texttt{diag}(v_{j},v_{j},v_{j},v_{j}).
\end{equation}
The right and left singular vectors of $M^{(\mathcal{S})}$ are generated by computing  
\begin{equation}\label{appro.rA.b}
  \tilde{u}_{j}^{(\mathcal{S})}=Q_{k}^{(\mathcal{S})}u_{j}^{(\mathcal{S})}, ~
 \tilde{v}_{j}^{(\mathcal{S})}=P_{k}^{(\mathcal{S})}v_{j}^{(\mathcal{S})}.
\end{equation}
Then we finally obtain the Lanczos bidiagonalization of $M^{(\mathcal{S})}$, 
\begin{equation}\label{svd.rA}
  M^{(\mathcal{S})}\tilde{v}_{j}^{(\mathcal{S})}= \tilde{u}_{j}^{(\mathcal{S})}\sigma_{j}^{(\mathcal{S})}, \ (M^{(\mathcal{S})})^{T}\tilde{u}_{j}^{(\mathcal{S})}=\tilde{v}_{j}^{(\mathcal{S})}\sigma_{j}^{(\mathcal{S})}+r_{k}^{(\mathcal{S})}(e_{k}^{(\mathcal{S})})^{T}u_{j}^{(\mathcal{S})},\  1\leq j\leq k.
\end{equation}
So that  $\{{\tilde{\sigma}_{j}^{(\mathcal{S})},\tilde{u}_{j}^{(\mathcal{S})},\tilde{v}_{j}^{(\mathcal{S})}}\}$ can be accepted as an approximate singular triplet of $M^{(\mathcal{S})}$ if the Frobenius norm of $r_{k}^{(\mathcal{S})}(e_{k}^{(\mathcal{S})})^{T}u_{j}^{(\mathcal{S})}$ is sufficiently small.

From the partial Lanczos bidigonalization decomposition of  $M^{(\mathcal{S})}$, we can exactly  deduce the partial Lanczos tridiagonalization of $(M^{(\mathcal{S})})^{T}M^{(\mathcal{S})}$.  Indeed,  multiplying the first equation of \eqref{rpld.pa} by $(M^{(\mathcal{S})})^{T}$ from the left-hand side and applying the second equation of  \eqref{rpld.pa} yields
\begin{equation}\label{ATAPLT}
  (M^{(\mathcal{S})})^{T}M^{(\mathcal{S})}P_{k}^{(\mathcal{S})}
  =P_{k}^{(\mathcal{S})}(B_{k}^{(\mathcal{S})})^{T}B_{k}^{(\mathcal{S})}+\alpha_{k}r_{k}^{(\mathcal{S})}(e_{k}^{(\mathcal{S})})^{T}.
\end{equation}
The columns of $P_{k}^{(\mathcal{S})}$ are separated into four independent groups which satisfy the three-term recurrence relationship. This can be seen from  
\begin{equation*}\label{rTk}
  T_{k}^{(\mathcal{S})}:=(B_{k}^{(\mathcal{S})})^{T}B_{k}^{(\mathcal{S})}\\
  =\texttt{diag}(T_{k},T_{k},T_{k},T_{k}),
\end{equation*}
where
\begin{equation*}\label{Tk}
  T_{k}:=B_{k}^{T}B_{k}=\left[
          \begin{array}{ccccc}
            \alpha_{1}^{2} & \beta_{1}\alpha_{1} & 0 & \cdots & 0 \\
            \beta_{1}\alpha_{1} & \alpha_{2}^{2}+\beta_{1}^{2} & \beta_{2}\alpha_{3} & \cdots & 0 \\
            0 & \ddots & \ddots & \ddots & 0 \\
            0 & 0 & \ddots & \alpha_{k-1}^{2}+\beta_{k-2}^{2} & \beta_{k-2}\alpha_{k} \\
            0 & 0 & \cdots & \beta_{k-2}\alpha_{k} & \alpha_{k}^{2}+\beta_{k-1}^{2}  \\
          \end{array}
        \right]\in \mathbb{R}^{k\times k}.
\end{equation*}
Define $p_{j}^{(\mathcal{S})}:=P_{k}^{(\mathcal{S})}e_{j}^{(\mathcal{S})}$, $j=1,\ldots,k$. 
Then \{$p_{1}^{(\mathcal{S})}$,\ $p_{2}^{(\mathcal{S})}$,\ \ldots,\ $p_{k}^{(\mathcal{S})}$\} acts as an orthonormal basis of the block Krylov subspace
\begin{equation*}\label{KATA}
   \mathcal{K}((M^{(\mathcal{S})})^{T}M^{(\mathcal{S})},p_{1}^{(\mathcal{S})} )=\texttt{span}\{p_{1}^{(\mathcal{S})} ,(M^{(\mathcal{S})})^{T}M^{(\mathcal{S})}p_{1}^{(\mathcal{S})},\cdots,((M^{(\mathcal{S})})^{T}M^{(\mathcal{S})})^{k-1}p_{1}^{(\mathcal{S})} \}.
\end{equation*}
Similarly, multiplying the second equation of  \eqref{rpld.pa} by $M^{(\mathcal{S})}$ from the left-hand side and applying the first equation of  \eqref{rpld.pa} yield
\begin{equation*}\label{AATPLT}
    M^{(\mathcal{S})}(M^{(\mathcal{S})})^{T}Q_{k}^{(\mathcal{S})}=Q_{k}^{(\mathcal{S})}B_{k}^{(\mathcal{S})}(B_{k}^{(\mathcal{S})})^{T}+M^{(\mathcal{S})}r_{k}^{(\mathcal{S})}(e_{k}^{(\mathcal{S})})^{T}.
\end{equation*}
Define $q_{j}^{(\mathcal{S})}:=Q_{k}^{(\mathcal{S})}e_{j}^{(\mathcal{S})}$, $j=1,\ldots,k$.
Then \{$q_{1}^{(\mathcal{S})}$,\ $q_{2}^{(\mathcal{S})}$,\ \ldots,\ $q_{k}^{(\mathcal{S})}$\} is an orthonormal basis of the block Krylov subspace
\begin{equation*}\label{KAAT}
  \mathcal{K}(M^{(\mathcal{S})}(M^{(\mathcal{S})})^{T},q_{1}^{(\mathcal{S})})=\texttt{span}\{q_{1}^{(\mathcal{S})} ,M^{(\mathcal{S})}(M^{(\mathcal{S})})^{T}q_{1}^{(\mathcal{S})},\cdots,(M^{(\mathcal{S})}(M^{(\mathcal{S})})^{T})^{k-1}q_{1}^{(\mathcal{S})} \}.
\end{equation*}
Defined as in \eqref{appro.rA.b},   $\tilde{v}_{j}^{(\mathcal{S})}$   is also called a ``Ritz vector'' of $(M^{(\mathcal{S})})^{T}M^{(\mathcal{S})}$ associated with the ``Ritz value'' $(\tilde{\sigma}_{j}^{(\mathcal{S})})^{2}$.
In fact,  multiplying \eqref{ATAPLT} by $v_{j}^{(\mathcal{S})}$ from the right-hand side  yields
\begin{equation*}\label{Ritz.1}
  (M^{(\mathcal{S})})^{T}M^{(\mathcal{S})}\tilde{v}_{j}^{(\mathcal{S})}-\tilde{v}_{j}^{(\mathcal{S})}(\sigma_{j}^{(\mathcal{S})})^{2}=\alpha_{k}r_{k}^{(\mathcal{S})}(e_{k}^{(\mathcal{S})})^{T}v_{j}^{(\mathcal{S})}, \ 1\leq j\leq k.
\end{equation*}

Based on above analysis,  we can present the restarted Lanczos bidiagonalization method  with structure preservation.
Suppose we have computed the Ritz vectors $\tilde{v}_{j}^{(\mathcal{S})}$, $1\leq j\leq t$,  associated with the first $t$ largest Ritz values. These Ritz vectors are gathered into a JRS-symmetric matrix
\begin{equation*}
  \tilde{V}_{t}^{(\mathcal{S})}:=\left[
                           \begin{array}{rrrr}
                             \tilde{V}_{t}^{(0)} & \tilde{V}_{t} ^{(2)} & \tilde{V}_{t} ^{(1)} & \tilde{V}_{t} ^{(3)} \\
                             -\tilde{V}_{t}^{(2)} & \tilde{V}_{t} ^{(0)} &\tilde{V}_{t} ^{(3)} & -\tilde{V}_{t}^{(1)} \\
                             -\tilde{V}_{t} ^{(1)} & -\tilde{V}_{t}^{(3)} & \tilde{V}_{t} ^{(0)} & \tilde{V}_{t} ^{(2)} \\
                             -\tilde{V}_{t} ^{(3)} & \tilde{V}_{t} ^{(1)} & -\tilde{V}_{t}^{(2)} & \tilde{V}_{t} ^{(0)} \\
                           \end{array}
                         \right]\in \mathbb{R}^{4n\times 4t}.
\end{equation*}
Suppose that $\beta_{k}>0$ in line $7$ of Algorithm 1, then  the $(k+1)$th block column vector of  $P_{k+1}^{(\mathcal{S})}$ is
\begin{equation}\label{e:pk+1}
  p_{k+1}^{(\mathcal{S})}=r_{k}^{(\mathcal{S})}/\beta_{k}.
\end{equation} 
Matrix $\tilde{V}_{t}^{(\mathcal{S})}$ is enlarged to 
\begin{equation*}
  \tilde{V}_{t+1}^{(\mathcal{S})}:=\left[
                                       \begin{array}{rrrr}
                              \tilde{V}_{t+1} ^{(0)} & \tilde{V}_{t+1} ^{(2)} & \tilde{V}_{t+1} ^{(1)} & \tilde{V}_{t+1} ^{(3)} \\
                            - \tilde{V}_{t+1} ^{(2)} & \tilde{V}_{t+1} ^{(0)} & \tilde{V}_{t+1} ^{(3)} & - \tilde{V}_{t+1} ^{(1)} \\
                             - \tilde{V}_{t+1} ^{(1)} & - \tilde{V}_{t+1} ^{(3)} & \tilde{V}_{t+1} ^{(0)}& \tilde{V}_{t+1} ^{(2)} \\
                             - \tilde{V}_{t+1} ^{(3)} & \tilde{V}_{t+1} ^{(1)} & - \tilde{V}_{t+1} ^{(2)} & \tilde{V}_{t+1} ^{(0)}\\
                                       \end{array}
                                     \right]\in \mathbb{R}^{4n\times 4(t+1)},
\end{equation*}
where
$$ \tilde{V}_{t+1} ^{(0)}=[\tilde{V}_{t} ^{(0)},p_{k+1}^{(\mathcal{S})}(1:n,1)],~
 \tilde{V}_{t+1} ^{(2)}=[\tilde{V}_{t}^{(2)},p_{k+1}^{(\mathcal{S})}(1:n,2)],$$
$$ \tilde{V}_{t+1}^{(1)}=[\tilde{V}_{t} ^{(1)},p_{k+1}^{(\mathcal{S})}(1:n,3)],~
\tilde{V}_{t+1} ^{(3)}=[\tilde{V}_{t}^{(3)},p_{k+1}^{(\mathcal{S})}(1:n,4)].$$
According to \eqref{e:pk+1}, the last column of each block of $ \tilde{V}_{t+1}^{(\mathcal{S})}$ is  a ``vector''  parallel to the residual error $r_{k}^{(\mathcal{S})}$.
Restarting the Lanczos process with $p_{k+1}^{(\mathcal{S})}$ as the new initial vector and utilizing  $(\ref{appro.rA.a})$-$(\ref{appro.rA.b})$ and $(\ref{svd.rA})$,  we obtain $M^{(\mathcal{S})}{ \tilde{V}_{t+1}}^{(\mathcal{S})}:=$ with blocks as 
$$(M^{(\mathcal{S})}\tilde{V}_{t+1}^{(\mathcal{S})})^{(0)}=[ (\tilde{u}_{1}^{(\mathcal{S})}\tilde{\sigma}_{1}^{(\mathcal{S})})(1:m,1),\cdots, (\tilde{u}_{t}^{(\mathcal{S})}\tilde{\sigma}_{t}^{(\mathcal{S})})(1:m,1),(M^{(\mathcal{S})}p_{k+1}^{(\mathcal{S})})(1:m,1)],$$
$$(M^{(\mathcal{S})}\tilde{V}_{t+1}^{(\mathcal{S})})^{(2)}=[ (\tilde{u}_{1}^{(\mathcal{S})}\tilde{\sigma}_{1}^{(\mathcal{S})})(1:m,2),\cdots, (\tilde{u}_{t}^{(\mathcal{S})}\tilde{\sigma}_{t}^{(\mathcal{S})})(1:m,2),(M^{(\mathcal{S})}p_{k+1}^{(\mathcal{S})})(1:m,2)],$$
$$(M^{(\mathcal{S})}\tilde{V}_{t+1}^{(\mathcal{S})})^{(1)}=[ (\tilde{u}_{1}^{(\mathcal{S})}\tilde{\sigma}_{1}^{(\mathcal{S})})(1:m,3),\cdots, (\tilde{u}_{t}^{(\mathcal{S})}\tilde{\sigma}_{t}^{(\mathcal{S})})(1:m,3),(M^{(\mathcal{S})}p_{k+1}^{(\mathcal{S})})(1:m,3)],$$
$$(M^{(\mathcal{S})}\tilde{V}_{t+1}^{(\mathcal{S})})^{(3)}=[ (\tilde{u}_{1}^{(\mathcal{S})}\tilde{\sigma}_{1}^{(\mathcal{S})})(1:m,4),\cdots, (\tilde{u}_{t}^{(\mathcal{S})}\tilde{\sigma}_{t}^{(\mathcal{S})})(1:m,4),(M^{(\mathcal{S})}p_{k+1}^{(\mathcal{S})})(1:m,4)].$$

Now we orthogonalize $M^{(\mathcal{S})}p_{k+1}^{(\mathcal{S})}$ to  each $\tilde{u}_{j}^{(\mathcal{S})}$ and get 
\begin{equation}\label{Ritz.5}
  \tilde{r}_{t}^{(\mathcal{S})}=M^{(\mathcal{S})}p_{k+1}^{(\mathcal{S})}-\sum_{j=1}^{t}(\tilde{u}_{j}^{(\mathcal{S})})^{T}M^{(\mathcal{S})}p_{k+1}^{(\mathcal{S})} \tilde{u}_{j}^{(\mathcal{S})}.
\end{equation}
Since
 \begin{equation}\label{Ritz.6}
  (\tilde{u}_{j}^{(\mathcal{S})})^{T}M^{(\mathcal{S})}p_{k+1}^{(\mathcal{S})}=\beta_{k}(e_{k}^{(\mathcal{S})})^{T}u_{j}^{(\mathcal{S})},
\end{equation}
is  diagonal,
we can define $\tilde{\rho}_{j}^{(\mathcal{S})}:=(\tilde{u}_{j}^{(\mathcal{S})})^{T}M^{(\mathcal{S})}p_{k+1}^{(\mathcal{S})}:=\texttt{diag}(\tilde{\rho}_{j},\tilde{\rho}_{j},\tilde{\rho}_{j},\tilde{\rho}_{j})$.
The remainder vector $\tilde{r}_{t}^{(\mathcal{S})}$ is always assumed to be nonzero.  Otherwise, the iteration is terminated.
We normalize $\tilde{r}_{t}^{(\mathcal{S})}$ and add it to $\tilde{U}_{t}^{(\mathcal{S})}$ as the last column, which generates an enlarged matrix 
\begin{equation*}\label{Ritz.7}
  \tilde{U}_{t+1}^{(\mathcal{S})}:=\left[
                                     \begin{array}{rrrr}
                                       \tilde{U}_{t+1}^{(0)} & \tilde{U}_{t+1} ^{(2)} & \tilde{U}_{t+1} ^{(1)} & \tilde{U}_{t+1} ^{(3)} \\

                                       - \tilde{U}_{t+1}^{(2)} &  \tilde{U}_{t+1}^{(0)} &  \tilde{U}_{t+1}^{(3)} & - \tilde{U}_{t+1}^{(1)} \\

                                       - \tilde{U}_{t+1}^{(1)} & - \tilde{U}_{t+1}^{(3)} &  \tilde{U}_{t+1}^{(0)} &  \tilde{U}_{t+1}^{(2)} \\

                                       - \tilde{U}_{t+1}^{(3)} &  \tilde{U}_{t+1}^{(1)} & - \tilde{U}_{t+1}^{(2)} &  \tilde{U}_{t+1}^{(0)} \\
                                     \end{array}
                                   \right]\in \mathbb{R}^{4m\times 4(t+1)},
\end{equation*}
where
$$ \tilde{U}_{t+1}^{(0)}=[\tilde{u}_{1}^{(\mathcal{S})} (1:m;1),\cdots,\tilde{u}_{t}^{(\mathcal{S})}(1:m;1),(\tilde{r}_{t}^{(\mathcal{S})}/\|\tilde{r}_{t}^{(\mathcal{S})}\|)(1:m;1)],$$
$$ \tilde{U}_{t+1}^{(2)}=[\tilde{u}_{1}^{(\mathcal{S})}(1:m;2),\cdots,\tilde{u}_{t}^{(\mathcal{S})}(1:m;2),(\tilde{r}_{t}^{(\mathcal{S})}/\|\tilde{r}_{t}^{(\mathcal{S})}\|)(1:m;2)],$$
$$ \tilde{U}_{t+1}^{(1)}=[\tilde{u}_{1}^{(\mathcal{S})}(1:m;3),\cdots,\tilde{u}_{t}^{(\mathcal{S})}(1:m;3),(\tilde{r}_{t}^{(\mathcal{S})}/\|\tilde{r}_{t}^{(\mathcal{S})}\|)(1:m;3)],$$
$$ \tilde{U}_{t+1}^{(3)}=[\tilde{u}_{1}^{(\mathcal{S})}(1:m;4),\cdots,\tilde{u}_{t}^{(\mathcal{S})}(1:m;4),(\tilde{r}_{t}^{(\mathcal{S})}/\|\tilde{r}_{t}^{(\mathcal{S})}\|)(1:m;4)].$$
Define a $4(t+1)$-by-$4(t+1)$ matrix
\begin{equation}\label{Ritz.8}
  \tilde{B}_{t+1}^{(\mathcal{S})}:=\texttt{diag}( \tilde{B}_{t+1} , \tilde{B}_{t+1} , \tilde{B}_{t+1} , \tilde{B}_{t+1} )
\end{equation}
with
\begin{equation*}\label{Ritz.9}
  \tilde{B}_{t+1}:=\left[
                     \begin{array}{cccc}
                       \tilde{\sigma}_{1} &   & 0 & \tilde{\rho}_{1}  \\
                         & \ddots &   &\vdots\\
                         &   & \tilde{\sigma}_{t} & \tilde{\rho}_{t} \\
                     0 &   &  & \tilde{\alpha}_{t+1} \\
                     \end{array}
                   \right]\in \mathbb{R}^{(t+1)\times (t+1)}.
\end{equation*}
Then 
\begin{equation}\label{Ritz.10}
  M^{(\mathcal{S})}{ \tilde{V}_{t+1}}^{(\mathcal{S})}={\tilde{U}_{t+1}}^{(\mathcal{S})} \tilde{B}_{t+1}^{(\mathcal{S})}.
\end{equation}
Next we try to express $(M^{(\mathcal{S})})^{T}\tilde{U}_{t+1}^{(\mathcal{S})}$ in terms of ${ \tilde{V}_{t+1}}^{(\mathcal{S})}$ and $(\tilde{B}_{t+1}^{(\mathcal{S})})^{T}$.
According to $(\ref{svd.rA})$ and $(\ref{Ritz.6})$,
\begin{equation}\label{Ritz.11}
  (M^{(\mathcal{S})})^{T}\tilde{u}_{j}^{(\mathcal{S})}=\tilde{v}_{j}^{(\mathcal{S})}\tilde{\sigma}_{j}^{(\mathcal{S})}+r_{k}^{(\mathcal{S})}(e_{k}^{(\mathcal{S})})^{T}u_{j}^{(B_{k})}=\tilde{v}_{j}^{(\mathcal{S})}\tilde{\sigma}_{j}^{(\mathcal{S})}+p_{k+1}^{(\mathcal{S})}\tilde{\rho}_{j}^{(\mathcal{S})}, \  1\leq j\leq t.
\end{equation}
The last block column, $(M^{(\mathcal{S})})^{T}(\tilde{r}_{t}^{(\mathcal{S})}/\|\tilde{r}_{t}^{(\mathcal{S})}\|)$,  is orthogonal to the Ritz vectors $\tilde{v}_{j}^{(\mathcal{S})}$, i.e.,
 \begin{equation*}\label{Ritz.12}
  (\tilde{v}_{j}^{(\mathcal{S})})^{T}(M^{(\mathcal{S})})^{T}(\tilde{r}_{t}^{(\mathcal{S})}/\|\tilde{r}_{t}^{(\mathcal{S})}\|)=
  (\tilde{u}_{j}^{(\mathcal{S})})^{T}\tilde{\sigma}_{j}^{(\mathcal{S})}(\tilde{r}_{t}^{(\mathcal{S})}/\|\tilde{r}_{t}^{(\mathcal{S})}\|)=0, \ 1\leq j\leq t.
 \end{equation*}
So it can be expressed as 
\begin{equation}\label{Ritz.13}
  (M^{(\mathcal{S})})^{T}(\tilde{r}_{t}^{(\mathcal{S})}/\|\tilde{r}_{t}^{(\mathcal{S})}\|)=\tilde{\delta} p_{k+1}^{(\mathcal{S})}+\tilde{f}_{t+1}^{(\mathcal{S})},
\end{equation}
where
$\tilde{f}_{t+1}^{(\mathcal{S})}\in \mathbb{R}^{4n\times 4}$ is orthogonal to the vectors $\tilde{v}_{j}^{(\mathcal{S})}$, $1\leq j\leq t$, as well as to $p_{k+1}^{(\mathcal{S})}$ .
Using formula (\ref{Ritz.5}), we have
\begin{equation*}\label{Ritz.14}
  (p_{k+1}^{(\mathcal{S})})^{T} (M^{(\mathcal{S})})^{T}(\tilde{r}_{t}^{(\mathcal{S})}/\|\tilde{r}_{t}^{(\mathcal{S})}\|)=\|\tilde{r}_{t}^{(\mathcal{S})}\|.
\end{equation*}
Hence, $\tilde{\delta}=\|\tilde{r}_{t}^{(\mathcal{S})}\|$.
Combining (\ref{Ritz.11}) with (\ref{Ritz.13}) induces  the following expression
\begin{equation}\label{Ritz.15}
  (M^{(\mathcal{S})})^{T}\tilde{U}_{t+1}^{(\mathcal{S})}= \tilde{V}_{t+1}^{(\mathcal{S})}(\tilde{B}_{t+1}^{(\mathcal{S})})^{T}+\tilde{f}_{t+1}^{(\mathcal{S})}(e_{t+1}^{T})^{(\mathcal{S})}.
\end{equation}
We remark that $\tilde{f}_{t+1}^{(\mathcal{S})}$ can be computed by $(\ref{Ritz.13})$.

 If necessary, we restart the Lanczos bidiagonalization with the enlarged column as the new initial vector.
Suppose that  $\tilde{f}_{t+1}^{(\mathcal{S})}\neq 0$.  Let $\tilde{\beta}_{t+1}:=\|\tilde{f}_{t+1}^{(\mathcal{S})}\|$
and $\tilde{p}_{t+2}^{(\mathcal{S})}:=\tilde{f}_{t+1}^{(\mathcal{S})}/\tilde{\beta}_{t+1}$.
Then we can enlarge $ \tilde{V}_{t+1}^{(\mathcal{S})}$ into
\begin{equation*}\label{Ritz.16}
  \tilde{V}_{t+2}^{(\mathcal{S})}:=\left[
                                     \begin{array}{rrrr}
                                       \tilde{V}_{t+2}^{(0)} & \tilde{V}_{t+2} ^{(2)} & \tilde{V}_{t+2} ^{(1)} & \tilde{V}_{t+2} ^{(3)} \\
                                       -\tilde{V}_{t+2} ^{(2)} & \tilde{V}_{t+2}^{(0)} & \tilde{V}_{t+2}^{(3)} & -\tilde{V}_{t+2}^{(1)} \\
                                       -\tilde{V}_{t+2}^{(1)} & -\tilde{V}_{t+2}^{(3)} & \tilde{V}_{t+2}^{(0)} & \tilde{V}_{t+2}^{(2)} \\
                                       -\tilde{V}_{t+2}^{(3)} & \tilde{V}_{t+2}^{(1)} & -\tilde{V}_{t+2}^{(2)} & \tilde{V}_{t+2}^{(0)} \\
                                     \end{array}
                                   \right]\in \mathbb{R}^{4n\times4(t+2)},
\end{equation*}
where
\begin{align*}
\tilde{V}_{t+2}^{(0)}&=[ \tilde{V}_{t+1}^{(\mathcal{S})}(1:n,1:t+1),\tilde{p}_{t+2}^{(\mathcal{S})}(1:n,1)],\\
\tilde{V}_{t+2}^{(2)}&=[ \tilde{V}_{t+1}^{(\mathcal{S})}(1:n,t+2:2(t+1)),\tilde{p}_{t+2}^{(\mathcal{S})}(1:n,2)],\\
\tilde{V}_{t+2}^{(1)}&=[ \tilde{V}_{t+1}^{(\mathcal{S})}(1:n,2(t+1)+1:3(t+1)),\tilde{p}_{t+2}^{(\mathcal{S})}(1:n,3)],\\ 
\tilde{V}_{t+2}^{(3)}&=[ \tilde{V}_{t+1}^{(\mathcal{S})}(1:n,3(t+1)+1:4(t+1)),\tilde{p}_{t+2}^{(\mathcal{S})}(1:n,4)].
\end{align*}
Let
\begin{equation}\label{Ritz.17}
  \tilde{\alpha}_{t+2}\tilde{q}_{t+2}^{(\mathcal{S})}:=(I_{4m}-\tilde{U}_{t+1}^{(\mathcal{S})}(\tilde{U}_{t+1}^{(\mathcal{S})})^{T})M\tilde{p}_{t+2}^{(\mathcal{S})},
\end{equation}
where $\tilde{\alpha}_{t+2}>0$ is a scaling factor, such that  $\tilde{q}_{t+2}^{(\mathcal{S})}$ is of unit length; the Frobenius norm is $1$.
Then equation $(\ref{Ritz.15})$ yields
\begin{equation}\label{Ritz.19}
\begin{split} \tilde{\alpha}_{t+2}\tilde{q}_{t+2}^{(\mathcal{S})}
&=M^{(\mathcal{S})}\tilde{p}_{t+2}^{(\mathcal{S})}-\tilde{U}_{t+1}^{(\mathcal{S})}((M^{(\mathcal{S})})^{T}\tilde{U}_{t+1}^{(\mathcal{S})})^{T}\tilde{p}_{t+2}^{(\mathcal{S})}\\
&=M^{(\mathcal{S})}\tilde{p}_{t+2}^{(\mathcal{S})}-\tilde{U}_{t+1}^{(\mathcal{S})}( \tilde{V}_{t+1}^{(\mathcal{S})}(\tilde{B}_{t+1}^{(\mathcal{S})})^{T}+\tilde{\beta}_{t+1}\tilde{p}_{t+2}^{(\mathcal{S})}(e_{t+1}^{(\mathcal{S})})^{T})^{T}\tilde{p}_{t+2}^{(\mathcal{S})}\\
&=M^{(\mathcal{S})}\tilde{p}_{t+2}^{(\mathcal{S})}-\tilde{U}_{t+1}^{(\mathcal{S})}(\tilde{B}_{t+1}^{(\mathcal{S})}( \tilde{V}_{t+1}^{(\mathcal{S})})^{T}+\tilde{\beta}_{t+1}e_{t+1}^{(\mathcal{S})}(\tilde{p}_{t+2}^{(\mathcal{S})})^{T})\tilde{p}_{t+2}^{(\mathcal{S})}\\
&=M^{(\mathcal{S})}\tilde{p}_{t+2}^{(\mathcal{S})}-\tilde{\beta}_{t+1}\tilde{U}_{t+1}^{(\mathcal{S})}e_{t+1}^{(\mathcal{S})}\\
&=M^{(\mathcal{S})}\tilde{p}_{t+2}^{(\mathcal{S})}-\tilde{\beta}_{t+1}\tilde{U}_{t+1}^{(\mathcal{S})}.\\
\end{split}
\end{equation}
In similar way, we construct
\begin{equation*}\label{Ritz.20}
  \tilde{U}_{t+2}^{(\mathcal{S})}:=\left[
                                     \begin{array}{rrrr}
                                       \tilde{U}_{t+2} ^{(0)} & \tilde{U}_{t+2}^{(2)} & \tilde{U}_{t+2}^{(1)} & \tilde{U}_{t+2}^{(3)} \\
                                       -\tilde{U}_{t+2}^{(2)} & \tilde{U}_{t+2}^{(0)} & \tilde{U}_{t+2}^{(3)} & -\tilde{U}_{t+2}^{(1)} \\
                                       -\tilde{U}_{t+2}^{(1)} & -\tilde{U}_{t+2}^{(3)} & \tilde{U}_{t+2}^{(0)} & \tilde{U}_{t+2}^{(2)} \\
                                       -\tilde{U}_{t+2}^{(3)} & \tilde{U}_{t+2}^{(1)} & -\tilde{U}_{t+2}^{(2)} & \tilde{U}_{t+2}^{(0)} \\
                                     \end{array}
                                   \right]\in \mathbb{R}^{4m\times4(t+2)}
\end{equation*}
with 
\begin{align*}
\tilde{U}_{t+2}^{(0)}&=[\tilde{U}_{t+1}^{(\mathcal{S})}(1:m,1:t+1),\tilde{q}_{t+2}^{(\mathcal{S})}(1:m,1)],\\
\tilde{U}_{t+2}^{(2)}&=[\tilde{U}_{t+1}^{(\mathcal{S})}(1:m,t+2:2(t+1)),\tilde{q}_{t+2}^{(\mathcal{S})}(1:m,2)],\\
\tilde{U}_{t+2}^{(1)}&=[\tilde{U}_{t+1}^{(\mathcal{S})}(1:m,2(t+1)+1:3(t+1)),\tilde{q}_{t+2}^{(\mathcal{S})}(1:m,3)],\\
\tilde{U}_{t+2}^{(3)}&=[\tilde{U}_{t+1}^{(\mathcal{S})}(1:m,3(t+1)+1:4(t+1)),\tilde{q}_{t+2}^{(\mathcal{S})}(1:m,4)].
\end{align*}
It now follows from (\ref{Ritz.10}) and (\ref{Ritz.19}) that
\begin{equation}\label{Ritz.23}
  M^{(\mathcal{S})}{\tilde{V}_{t+2}}^{(\mathcal{S})}={\tilde{U}_{t+2}}^{(\mathcal{S})} \tilde{B}_{t+2}^{(\mathcal{S})},
\end{equation}
where
\begin{equation*}\label{Ritz.21}
  \tilde{B}_{t+2}^{(\mathcal{S})}:
  =\texttt{diag}( \tilde{B}_{t+2}, \tilde{B}_{t+2}, \tilde{B}_{t+2}, \tilde{B}_{t+2}),~
   \tilde{B}_{t+2}:=\left[
                     \begin{array}{ccccc}
                       \tilde{\sigma}_{1} &   & 0 & \tilde{\rho}_{1} & 0 \\
                         & \ddots &   & \vdots & \vdots \\
                         &   & \tilde{\sigma}_{t} & \tilde{\rho}_{t} & 0 \\
                         &   &   & \tilde{\alpha}_{t+1} & \tilde{\beta}_{t+1} \\
                       0 &   &   &   & \tilde{\alpha}_{t+2} \\
                     \end{array}
                   \right].
\end{equation*}
Let
\begin{equation}\label{Ritz.24}
\begin{split}
  \tilde{\beta}_{t+2}\tilde{p}_{t+3}^{(\mathcal{S})}:=(I_{4n}-\tilde{V}_{t+2}^{(\mathcal{S})}( \tilde{V}_{t+2}^{(\mathcal{S})})^{T})M^{T}\tilde{q}_{t+2}^{(\mathcal{S})},
  \end{split}
\end{equation}
where $\tilde{\beta}_{t+2}>0$ is a scaling factor such that  $\tilde{p}_{t+3}^{(\mathcal{S})}$ is of unit length. 
 Substituting $(\ref{Ritz.23})$ into $(\ref{Ritz.24})$ yields
\begin{equation}\label{Ritz.25}
  \begin{split}
   \tilde{\beta}_{t+2}\tilde{p}_{t+3}^{(\mathcal{S})}&=(M^{(\mathcal{S})})^{T}\tilde{q}_{t+2}^{(\mathcal{S})}-\tilde{V}_{t+2}^{(\mathcal{S})}(M\tilde{V}_{t+2}^{(\mathcal{S})})^{T}\tilde{q}_{t+2}^{(\mathcal{S})}\\
                                                &=(M^{(\mathcal{S})})^{T}\tilde{q}_{t+2}^{(\mathcal{S})}-\tilde{V}_{t+2}^{(\mathcal{S})}(\tilde{B}_{t+2}^{(\mathcal{S})})^{T}(\tilde{U}_{t+2}^{(\mathcal{S})})^{T}\tilde{q}_{t+2}^{(\mathcal{S})}\\
                                                &=(M^{(\mathcal{S})})^{T}\tilde{q}_{t+2}^{(\mathcal{S})}-\tilde{V}_{t+2}^{(\mathcal{S})}(\tilde{B}_{t+2}^{(\mathcal{S})})^{T}e_{t+2}^{(\mathcal{S})}\\
                                                &=(M^{(\mathcal{S})})^{T}\tilde{q}_{t+2}^{(\mathcal{S})}-\tilde{\alpha}_{t+2}\tilde{p}_{t+2}^{(\mathcal{S})}.\\
  \end{split}
\end{equation}
Thus, 
\begin{equation*}\label{Ritz.26}
   (M^{(\mathcal{S})})^{T}\tilde{U}_{t+2}^{(\mathcal{S})}=\tilde{V}_{t+2}^{(\mathcal{S})}(\tilde{B}_{t+2}^{(\mathcal{S})})^{T}+\tilde{\beta}_{t+2}\tilde{p}_{t+3}^{(\mathcal{S})}(e_{t+2}^{(\mathcal{S})})^{T}.
\end{equation*}
After  $k-t$ similar steps, we obtain the decompositions
\begin{equation*}\label{Ritz.27}
    M^{(\mathcal{S})}{\tilde{V}_{k}}^{(\mathcal{S})}={\tilde{U}_{k}}^{(\mathcal{S})} \tilde{B}_{k}^{(\mathcal{S})},\\
 (M^{(\mathcal{S})})^{T}\tilde{U}_{k}^{(\mathcal{S})}=\tilde{V}_{k}^{(\mathcal{S})}(\tilde{B}_{k}^{(\mathcal{S})})^{T}+\tilde{\beta}_{k}\tilde{p}_{k+1}^{(\mathcal{S})}(e_{k}^{(\mathcal{S})})^{T},
\end{equation*}
where $\tilde{V}_{k}^{(\mathcal{S})}$ and $\tilde{U}_{k}^{(\mathcal{S})}$ have orthonormal columns, and $\tilde{B}_{k}^{(\mathcal{S})}=\texttt{diag}(\tilde{B}_{k},\tilde{B}_{k},\tilde{B}_{k},\tilde{B}_{k})$
with 
$$\tilde{B}_{k}=\left[
                     \begin{array}{cccccc}
                       \tilde{\sigma}_{1} &   & 0 & \tilde{\rho}_{1} &  & 0\\
                         & \ddots &   & \vdots &  &  \\
                         &   & \tilde{\sigma}_{t} & \tilde{\rho}_{t} &   &  \\
                         &   &   & \tilde{\alpha}_{t+1} & \tilde{\beta}_{t+1} &  \\
                         &   &   &   & \ddots  & \\
                         &   &   &   &    \ddots &\tilde{\beta}_{k-1} \\
                       0 &   &   &   &   & \tilde{\alpha}_{k}\\
                     \end{array}
                   \right].$$
At the end  of each cycle,  we calculate the singular value decomposition of $\tilde{B}_{k}$ to get  the approximations of $t$ largest singular triplets of $M^{(\mathcal{S})}$.

\subsection{Computation of $k$ Smallest Singular Triplets}\label{augHritz}
The shifting by harmonic Ritz values can be implemented via augmentation by harmonic Ritz vectors. In this section,  we turn to computing the $t$ smallest singular triplets of a nonsingular JRS-symmetric matrix $M^{(\mathcal{S})}$ by  augmentation with harmonic Ritz vectors.

Suppose that  the partial Lanczos bidiagonalization $\eqref{rpld.pa}$ of $M^{(\mathcal{S})}$ have been  available, and all the diagonal and superdiagonal entries of $B_{k}$, as well as $\beta_{k}$ given by $\beta_{k}:=\parallel r_{k}\parallel$, are nonvanishing. 
\begin{definition}
A value $\hat{\theta}_{j}^{(\mathcal{S})}$ is a harmonic Ritz value of a matrix $M^{(\mathcal{S})}$ with respect to some linear subspace $\mathcal{W}_{k}^{(\mathcal{S})}$ if $(\hat{\theta}_{j}^{(\mathcal{S})})^{-1}$ is a Ritz value of $(M^{(\mathcal{S})})^{-1}$ with respect to $\mathcal{W}_{k}^{(\mathcal{S})}$.   The approximate eigenvectors of $M^{(\mathcal{S})}$ associated with harmonic Ritz values are called  harmonic Ritz vectors.
\end{definition}

 The ``harmonic Ritz values'' of $(M^{(\mathcal{S})})^{T}M^{(\mathcal{S})}$ 
  are the generalized eigenvalues of 
\begin{equation}\label{hr.2}
    \big(\big((B_{k}^{(\mathcal{S})})^{T}B_{k}^{(\mathcal{S})}\big)^2+\alpha_{k}^{2}\beta_{k}^{2}e_{k}^{(\mathcal{S})}(e_{k}^{(\mathcal{S})})^{T}\big)\hat{\omega}_{j}^{(\mathcal{S})}=(B_{k}^{(\mathcal{S})})^{T}(B_{k}^{(\mathcal{S})})\hat{\omega}_{j}^{(\mathcal{S})}\hat{\theta}_{j}^{(\mathcal{S})}.
\end{equation}
Define 
\begin{equation}\label{hr.3}
  \omega_{j}^{(\mathcal{S})}:=B_{k}^{(\mathcal{S})}\hat{\omega}_{j}^{(\mathcal{S})}=\texttt{diag}( B_{k}\hat{\omega}_{j}, B_{k}\hat{\omega}_{j}, B_{k}\hat{\omega}_{j}, B_{k}\hat{\omega}_{j}):=\texttt{diag}( \omega_{j}, \omega_{j}, \omega_{j}, \omega_{j}).
\end{equation}
Then equation \eqref{hr.2} can be expressed as 
\begin{equation}\label{hr.2.1}
    [B_{k}^{(\mathcal{S})}(B_{k}^{(\mathcal{S})})^{T}+\beta_{k}^{2}e_{k}^{(\mathcal{S})}(e_{k}^{(\mathcal{S})})^{T}]\omega_{j}^{(\mathcal{S})}=\omega_{j}^{(\mathcal{S})}\hat{\theta}_{j}^{(\mathcal{S})},
\end{equation}
since $B_{k}^{(\mathcal{S})}$  is invertible. 
The structured equation \eqref{hr.2.1} is equivalent to a reduced  form,
\begin{equation}\label{hr.4}
  (B_{k}B_{k}^{T}+\beta_{k}^{2}e_{k}e_{k}^{T})\omega_{j}=\omega_{j}\hat{\theta}_{j},~\text{i.e.},~ (B_{k,k+1}B_{k,k+1}^{T})\omega_{j}=\omega_{j}\hat{\theta}_{j}
\end{equation}
where  $B_{k,k+1}$ is the leading $k\times (k+1)$ submatrix of $B_{k+1}$. 
Here one can choose the eigenvectors, $\omega_{j}$'s, to be orthonormal. That means that the eigenpairs $\{\hat{\theta}_{j}^{(\mathcal{S})},\hat{\omega}_{j}^{(\mathcal{S})}\}$  can be computed without  forming the matrix $(B_{k}^{(\mathcal{S})})^{T}B_{k}^{(\mathcal{S})}$ explicitly.
We refer to \cite{morgan91,ppv95} and also to \cite{bare05,kbg04} for  the analysis of the eigenvalue problem \eqref{hr.4}.

Suppose that we have computed the singular triplets of matrix $B_{k,k+1}$, denoted by  $(\sigma_{j},u_{j},v_{j})$, where $1\leq j\leq k$,  $0< \sigma_{1}\leq \sigma_{2}\leq \cdots \leq \sigma_{k}$, $u_{j}\in \mathbb{R}^{k+1}$ and $v_{j}\in \mathbb{R}^{k}$.
The $t$ smallest singular triplets of $B_{k,k+1}$ determine the matrices
\begin{equation*}\label{hr.8}
U_{t}:=[u_{1},u_{2},\cdots,u_{t}],~
V_{t}:=[v_{1},v_{2},\cdots,v_{t}],~
\Sigma_{t}:=\texttt{diag}(\sigma_{1},\sigma_{2},\cdots,\sigma_{t}).
\end{equation*}
Clearly,   $(\sigma_{j}^{2},u_{j})$  is the eigenpair of $B_{k,k+1}B_{k,k+1}^{T}$. 
With \eqref{hr.2} and \eqref{hr.3} in mind,  we indeed  obtain $t$ generalized eigenpairs of the equation $(\ref{hr.2})$.
 In other words,   the ``harmonic Ritz values'' of $(M^{(\mathcal{S})})^{T}M^{(\mathcal{S})}$  can be computed by the singular value decomposition of a  bidiagonal matrix $B_{k,k+1}$.  The obtained harmonic Ritz value  and associated harmonic Ritz vector  are exactly
\begin{subequations}\label{hr.10}\begin{align}
&\hat{\theta}_{j}^{(\mathcal{S})}={\tt diag}(\sigma_{j}^{2},\sigma_{j}^{2},\sigma_{j}^{2},\sigma_{j}^{2}),&\\
&  \hat{v}_{j}^{(\mathcal{S})}:=P_{k}^{(\mathcal{S})}\hat{\omega}_{j}^{(\mathcal{S})}=P_{k}^{(\mathcal{S})}　\texttt{diag}(B_{k}^{-1}u_{j},B_{k}^{-1}u_{j},B_{k}^{-1}u_{j},B_{k}^{-1}u_{j}). &
  \end{align}
\end{subequations}
The residual error associated with different harmonic Ritz pair $\{\hat{\theta}_{j}^{(\mathcal{S})},  \hat{v}_{j}^{(\mathcal{S})}\}$ is
$$
\begin{aligned}
&(M^{(\mathcal{S})})^{T}M^{(\mathcal{S})}\hat{v}_{j}^{(\mathcal{S})}-\hat{v}_{j}^{(\mathcal{S})}\hat{\theta}_{j}^{(\mathcal{S})}\\
&\overset{(\ref{ATAPLT})}{=}[P_{k}^{(\mathcal{S})}(B_{k}^{(\mathcal{S})})^{T}B_{k}^{(\mathcal{S})}+\alpha_{k}r_{k}^{(\mathcal{S})}(e_{k}^{(\mathcal{S})})^{T}]\hat{\omega}_{j}^{(\mathcal{S})}-P_{k}^{(\mathcal{S})}\hat{\omega}_{j}^{(\mathcal{S})}\hat{\theta}_{j}^{(\mathcal{S})}\\
&=P_{k}^{(\mathcal{S})}(B_{k}^{(\mathcal{S})})^{-1}[B_{k}^{(\mathcal{S})}(B_{k}^{(\mathcal{S})})^{T}\omega_{j}^{(\mathcal{S})}-\omega_{j}^{(\mathcal{S})}\hat{\theta}_{j}^{(\mathcal{S})}]+r_{k}^{(\mathcal{S})}(e_{k}^{(\mathcal{S})})^{T}\omega_{j}^{(\mathcal{S})}\\
&\overset{(\ref{hr.2.1})}{=}P_{k}^{(\mathcal{S})}(B_{k}^{(\mathcal{S})})^{-1}[-\beta_{k}^{2}e_{k}^{(\mathcal{S})}(e_{k}^{(\mathcal{S})})^{T}]\omega_{j}^{(\mathcal{S})}+r_{k}^{(\mathcal{S})}(e_{k}^{(\mathcal{S})})^{T}\omega_{j}^{(\mathcal{S})}\\
&=[r_{k}^{(\mathcal{S})}-\beta_{k}^{2}P_{k}^{(\mathcal{S})}(B_{k}^{(\mathcal{S})})^{-1}e_{k}^{(\mathcal{S})}](e_{k}^{(\mathcal{S})})^{T}\omega_{j}^{(\mathcal{S})}.
\end{aligned}
$$
It is convenient to define the scaled residual vector as 
\begin{equation}\label{hr.11}
  \hat{r}_{k}^{(\mathcal{S})}:=p_{k+1}^{(\mathcal{S})}-\beta_{k}P_{k}^{(\mathcal{S})}(B_{k}^{(\mathcal{S})})^{-1}e_{k}^{(\mathcal{S})},
\end{equation}
where $p_{k+1}^{(\mathcal{S})}:=r_{k}^{(\mathcal{S})}/\beta_{k}$ according to Algorithm 1.

Now we present the decomposition of  $M$  under  the restarted Lanczos bidiagonalization with augmentation by harmonic Ritz vectors. 
Define
\begin{equation*}\label{V}
   \hat{V}^{(\mathcal{S})}:=\left[
                                     \begin{array}{rrrr}
                                       \hat{V}^{(0)} &\hat{V}^{(2)} & \hat{V}^{(1)} & \hat{V}^{(3)} \\
                                       -\hat{V}^{(2)} & \hat{V}^{(0)} & \hat{V}^{(3)} & -\hat{V}^{(1)} \\
                                       -\hat{V}^{(1)} & -\hat{V}^{(3)} & \hat{V}^{(0)} & \hat{V}^{(2)} \\
                                       -\hat{V}^{(3)} & \hat{V}^{(1)} & -\hat{V}^{(2)} & \hat{V}^{(0)} \\
                                     \end{array}
                                   \right]\in \mathbb{R}^{4n\times4(t+1)},
\end{equation*}
and
 $C^{(\mathcal{S})}:=\texttt{diag}(C,C,C,C), 
D^{(\mathcal{S})}:=\texttt{diag}(D,D,D,D)\in \mathbb{R}^{4(t+1)\times 4(t+1)}$,
where
$$  \hat{V}^{(0)}=[\hat{v}_{1}^{(\mathcal{S})}(1:n,1),\cdots,\hat{v}_{t}^{(\mathcal{S})}(1:n,1),\hat{r}_{k}^{(\mathcal{S})}(1:n,1)],$$
 $$  \hat{V}^{(2)}=[\hat{v}_{1}^{(\mathcal{S})}(1:n,2),\cdots,\hat{v}_{t}^{(\mathcal{S})}(1:n,2),\hat{r}_{k}^{(\mathcal{S})}(1:n,2)],$$
 $$   \hat{V}^{(1)}=[\hat{v}_{1}^{(\mathcal{S})}(1:n,3),\cdots,\hat{v}_{t}^{(\mathcal{S})}(1:n,3),\hat{r}_{k}^{(\mathcal{S})}(1:n,3)],$$
 $$    \hat{V}^{(3)}=[\hat{v}_{1}^{(\mathcal{S})}(1:n,4),\cdots,\hat{v}_{t}^{(\mathcal{S})}(1:n,4),\hat{r}_{k}^{(\mathcal{S})}(1:n,4)],$$
 $$C=\left[
          \begin{array}{cc}
             B_{k}^{-1}U_{t}\Sigma_{t}& -\beta_{k} B_{k}^{-1}e_{k}\\
            0 & 1 \\
          \end{array}
        \right],\ D=\left[  \begin{array}{ccccc}
                       \sigma_{1} &   & 0 & 0  \\
                         & \ddots &  \vdots & \vdots  \\
                         &   & \sigma_{t} &0\\
                       0  &  \cdots & 0  & 1 \\
                     \end{array}
                   \right].$$
\begin{theorem}
Let
\begin{equation}\label{hr.26}
\hat{B}_{t+1}^{(\mathcal{S})}:={\tt diag}(\hat{D}_{t+1},\hat{D}_{t+1},\hat{D}_{t+1},\hat{D}_{t+1}) (R_{t+1}^{(\mathcal{S})})^{-1},~\hat{D}_{t+1}=\left[
                     \begin{array}{cccc}
                       {\sigma}_{1} &   & 0 & \hat{\gamma}_{1}  \\
                         & \ddots &   &\vdots\\
                         &   & {\sigma}_{t} &\hat{\gamma}_{t}  \\
                     0 &   &  & \hat{\alpha}_{t+1} \\
                     \end{array}
                   \right],
\end{equation}
and 
 \begin{equation*}\label{hr.40}
  \hat{B}_{k}^{(\mathcal{S})}=\left[
                                   \begin{array}{cccc}
                                     \hat{B}_{k} & 0 & 0 & 0 \\
                                     0 & \hat{B}_{k} & 0 & 0 \\
                                     0 & 0 & \hat{B}_{k} & 0 \\
                                     0 & 0 & 0 & \hat{B}_{k} \\
                                   \end{array}
                                 \right],~
\hat{B}_{k}=\left[
               \begin{array}{cccccc}
                 \hat{B}_{t+1} & \hat{\beta}_{t+1} &   &   &   & 0 \\
                   & \hat{\alpha}_{t+2} & \hat{\beta}_{t+2} &   &   &   \\
                   &   & \hat{\alpha}_{t+3} & \hat{\beta}_{t+3} &   &   \\
                   &   &   &   & \ddots &   \\
                   &   &   &   & \ddots & \hat{\beta}_{k-1} \\
                 0 &   &   &   &   & \hat{\alpha}_{k} \\
               \end{array}
             \right].
\end{equation*}
 Then 
\begin{equation}\label{hr.39}
 M\hat{P}_{k}^{(\mathcal{S})}=\hat{Q}_{k}^{(\mathcal{S})}\hat{B}_{k}^{(\mathcal{S})},\  \
 M^{T}\hat{Q}_{k}^{(\mathcal{S})}=\hat{P}_{k}^{(\mathcal{S})}(\hat{B}_{k}^{(\mathcal{S})})^{T}+(\breve{r}_{k}^{(\mathcal{S})})(e_{t+1}^{(\mathcal{S})})^{T},
\end{equation}
 and the residual vector $\breve{r}_{k}^{(\mathcal{S})}$ is orthogonal to the columns of $\hat{P}_{k}^{(\mathcal{S})}$.
\end{theorem}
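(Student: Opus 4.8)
The plan is to mirror the construction of Section~\ref{s:augritz}, using harmonic Ritz quantities in place of Ritz quantities, and to perform every reduction with orthogonal multi-symplectic transformations, so that the JRS-symmetric shape \eqref{e:Mstr} of all intermediate matrices is guaranteed by Theorems~\ref{t:MSB}--\ref{t:PMSB}.

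First I would extend the partial Lanczos bidiagonalization \eqref{rpld.pa} by one step of Algorithm 1: since $p_{k+1}^{(\mathcal{S})}=r_{k}^{(\mathcal{S})}/\beta_{k}$, this produces $M^{(\mathcal{S})}P_{k+1}^{(\mathcal{S})}=Q_{k+1}^{(\mathcal{S})}B_{k+1}^{(\mathcal{S})}$ together with $(M^{(\mathcal{S})})^{T}Q_{k}^{(\mathcal{S})}=P_{k+1}^{(\mathcal{S})}(B_{k,k+1}^{(\mathcal{S})})^{T}$, where $B_{k+1}^{(\mathcal{S})}$ and $B_{k,k+1}^{(\mathcal{S})}$ are the block-diagonal matrices built from $B_{k+1}=\left[\begin{smallmatrix}B_{k}&\beta_{k}e_{k}\\ 0&\alpha_{k+1}\end{smallmatrix}\right]$ and from its leading $k\times(k+1)$ submatrix. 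The key elementary identity is $B_{k+1}C=\left[\begin{smallmatrix}U_{t}\Sigma_{t}&0\\ 0&\alpha_{k+1}\end{smallmatrix}\right]$, obtained by one block multiplication in which the off-diagonal block $-\beta_{k}B_{k}B_{k}^{-1}e_{k}+\beta_{k}e_{k}$ cancels. Next I would check columnwise, from \eqref{hr.10} and \eqref{hr.11}, that $\hat V^{(\mathcal{S})}=P_{k+1}^{(\mathcal{S})}C^{(\mathcal{S})}$; combined with the identity above this gives $M^{(\mathcal{S})}\hat V^{(\mathcal{S})}=\hat Q_{t+1}^{(\mathcal{S})}D^{(\mathcal{S})}$, where $\hat Q_{t+1}^{(\mathcal{S})}$ is the JRS-symmetric matrix assembled from the columns of $Q_{k}^{(\mathcal{S})}U_{t}$ and from $q_{k+1}^{(\mathcal{S})}$, and its columns are orthonormal because $U_{t}$ has orthonormal columns and $q_{k+1}^{(\mathcal{S})}\perp Q_{k}^{(\mathcal{S})}$.

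I would then apply a structure-preserving QR factorization: since $C^{(\mathcal{S})}$ is block-diagonal and $P_{k+1}^{(\mathcal{S})}$ is orthogonal multi-symplectic, a blockwise thin QR yields $\hat V^{(\mathcal{S})}=\hat P_{t+1}^{(\mathcal{S})}R_{t+1}^{(\mathcal{S})}$ with $\hat P_{t+1}^{(\mathcal{S})}$ orthogonal multi-symplectic with orthonormal columns and $R_{t+1}^{(\mathcal{S})}=\texttt{diag}(R_{t+1},R_{t+1},R_{t+1},R_{t+1})$ upper triangular; substituting gives the first relation of \eqref{hr.39} for the leading $t+1$ block columns, namely $M^{(\mathcal{S})}\hat P_{t+1}^{(\mathcal{S})}=\hat Q_{t+1}^{(\mathcal{S})}D^{(\mathcal{S})}(R_{t+1}^{(\mathcal{S})})^{-1}=:\hat Q_{t+1}^{(\mathcal{S})}\hat B_{t+1}^{(\mathcal{S})}$ with $\hat B_{t+1}^{(\mathcal{S})}$ of the stated form \eqref{hr.26}, the ``arrow'' pattern arising because only the last column of $R_{t+1}$ couples the residual direction to the first $t$ harmonic-Ritz directions. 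For the companion relation, I would expand $(M^{(\mathcal{S})})^{T}\hat Q_{t+1}^{(\mathcal{S})}$ using $(M^{(\mathcal{S})})^{T}Q_{k}^{(\mathcal{S})}=P_{k+1}^{(\mathcal{S})}(B_{k,k+1}^{(\mathcal{S})})^{T}$ and $(M^{(\mathcal{S})})^{T}q_{k+1}^{(\mathcal{S})}=\alpha_{k+1}p_{k+1}^{(\mathcal{S})}+\hat f_{t+1}^{(\mathcal{S})}$, where $\hat f_{t+1}^{(\mathcal{S})}$ is orthogonal to $p_{1}^{(\mathcal{S})},\dots,p_{k+1}^{(\mathcal{S})}$; invoking the singular-value relations of $B_{k,k+1}$ and the Sherman--Morrison identity relating $(B_{k}B_{k}^{T})^{-1}$ and $(B_{k,k+1}B_{k,k+1}^{T})^{-1}$ (equivalently, the harmonic-Ritz residual identity derived before the statement from \eqref{ATAPLT} and \eqref{hr.2.1}), each of the first $t$ columns of $(M^{(\mathcal{S})})^{T}\hat Q_{t+1}^{(\mathcal{S})}$ lands in $\texttt{span}(\hat P_{t+1}^{(\mathcal{S})})$, while the last column contributes only a term along $\hat f_{t+1}^{(\mathcal{S})}$, giving $(M^{(\mathcal{S})})^{T}\hat Q_{t+1}^{(\mathcal{S})}=\hat P_{t+1}^{(\mathcal{S})}(\hat B_{t+1}^{(\mathcal{S})})^{T}+\hat f_{t+1}^{(\mathcal{S})}(e_{t+1}^{(\mathcal{S})})^{T}$ with $\hat f_{t+1}^{(\mathcal{S})}\perp\hat P_{t+1}^{(\mathcal{S})}$. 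Finally I would run $k-t-1$ further ordinary steps of the loop of Algorithm 1 starting from this length-$(t+1)$ partial bidiagonalization, with $\hat f_{t+1}^{(\mathcal{S})}/\|\hat f_{t+1}^{(\mathcal{S})}\|$ as the new block vector; by Theorem~\ref{t:PMSB} the JRS structure persists, the appended entries are $\hat\beta_{t+1},\hat\alpha_{t+2},\dots,\hat\alpha_{k}$, the new block columns of $\hat P_{k}^{(\mathcal{S})},\hat Q_{k}^{(\mathcal{S})}$ are orthonormal, and the reorthogonalizations in lines~5 and~10 keep $\breve r_{k}^{(\mathcal{S})}\perp\hat P_{k}^{(\mathcal{S})}$, so the resulting $\hat B_{k}^{(\mathcal{S})}$ has exactly the stated block-diagonal form with leading block $\hat B_{t+1}$ and a bidiagonal tail, which is \eqref{hr.39}.

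The main obstacle is the collapse of $(M^{(\mathcal{S})})^{T}\hat Q_{t+1}^{(\mathcal{S})}$: a priori the vectors $(M^{(\mathcal{S})})^{T}q_{j}^{(\mathcal{S})}$ for $1\le j\le t$ could carry components along $p_{k+1}^{(\mathcal{S})}$ and along $P_{k}^{(\mathcal{S})}(B_{k}^{(\mathcal{S})})^{-1}e_{k}^{(\mathcal{S})}$ \emph{separately}, which would destroy the rank-one residual form $(\cdot)(e_{t+1}^{(\mathcal{S})})^{T}$ in \eqref{hr.39}. They stay inside $\texttt{span}(\hat P_{t+1}^{(\mathcal{S})})$ only because the scaled residual $\hat r_{k}^{(\mathcal{S})}$ in \eqref{hr.11} was defined with exactly the combination $p_{k+1}^{(\mathcal{S})}-\beta_{k}P_{k}^{(\mathcal{S})}(B_{k}^{(\mathcal{S})})^{-1}e_{k}^{(\mathcal{S})}$; the cleanest way to see it is to substitute the harmonic-Ritz residual identity established just before the theorem into the expansion of $(M^{(\mathcal{S})})^{T}\hat Q_{t+1}^{(\mathcal{S})}$ and read off the coefficients. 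The remaining verifications — that the structure-preserving QR and the subsequent Lanczos steps genuinely keep all matrices of the form \eqref{e:Mstr}, and that $\breve r_{k}^{(\mathcal{S})}$ is orthogonal to $\hat P_{k}^{(\mathcal{S})}$ — are routine consequences of Theorems~\ref{t:MSB}--\ref{t:PMSB} and of the explicit orthogonalizations in Algorithm 1.
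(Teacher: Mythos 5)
Your overall strategy mirrors the paper's: relate the harmonic-Ritz block $\hat V^{(\mathcal{S})}$ to $P_{k+1}^{(\mathcal{S})}C^{(\mathcal{S})}$, take a structure-preserving QR factorization of the small factor, define $\hat P_{t+1}^{(\mathcal{S})}$ from the Q-factor, derive both sides of the new partial bidiagonalization for $t+1$ block columns, and continue with $k-t-1$ ordinary Lanczos steps. The paper's proof does exactly this via (hr.12)--(hr.38). Your use of the one-step extension to $B_{k+1}$ and the clean identity $B_{k+1}C={\tt diag}(U_t\Sigma_t,\alpha_{k+1})$ is an equivalent (arguably tidier) way of organizing the same computation that the paper carries out by explicit Gram--Schmidt of $M p_{k+1}^{(\mathcal{S})}-\beta_k q_k^{(\mathcal{S})}$ against $\hat Q_t^{(\mathcal{S})}$.

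However, there is a concrete bookkeeping error that propagates through your argument. You assert $\hat V^{(\mathcal{S})}=P_{k+1}^{(\mathcal{S})}C^{(\mathcal{S})}$, but the correct identity — and the paper's (hr.12) — is $\hat V^{(\mathcal{S})}D^{(\mathcal{S})}=P_{k+1}^{(\mathcal{S})}C^{(\mathcal{S})}$: column $j$ of $P_{k+1}^{(\mathcal{S})}C^{(\mathcal{S})}$ is $\hat v_j^{(\mathcal{S})}\sigma_j$, not $\hat v_j^{(\mathcal{S})}$, for $1\le j\le t$. Consistently with that slip, you then claim $M^{(\mathcal{S})}\hat V^{(\mathcal{S})}=\hat Q_{t+1}^{(\mathcal{S})}D^{(\mathcal{S})}$, but your own identity $B_{k+1}C={\tt diag}(U_t\Sigma_t,\alpha_{k+1})$ gives last column $\alpha_{k+1}q_{k+1}^{(\mathcal{S})}$, whereas $\hat Q_{t+1}^{(\mathcal{S})}D^{(\mathcal{S})}$ has last column $q_{k+1}^{(\mathcal{S})}\cdot 1$; these disagree because $D$'s bottom-right entry is $1$, not $\alpha_{k+1}$. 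As a result, the $\hat B_{t+1}^{(\mathcal{S})}$ you arrive at is $D^{(\mathcal{S})}(R_{t+1}^{(\mathcal{S})})^{-1}$, which is not the matrix defined in (hr.26): that definition uses $\hat D_{t+1}$, whose last diagonal entry is $\hat\alpha_{t+1}$ (equal here to $\alpha_{k+1}$), not $1$. The fix is to keep the $D$ factor throughout: $\hat V^{(\mathcal{S})}D^{(\mathcal{S})}=\hat P_{t+1}^{(\mathcal{S})}R_{t+1}^{(\mathcal{S})}$ and $M^{(\mathcal{S})}\hat V^{(\mathcal{S})}D^{(\mathcal{S})}=\hat Q_{t+1}^{(\mathcal{S})}{\tt diag}(\Sigma_t,\hat\alpha_{t+1})^{(\mathcal{S})}$, after which $M^{(\mathcal{S})}\hat P_{t+1}^{(\mathcal{S})}=\hat Q_{t+1}^{(\mathcal{S})}\hat D_{t+1}^{(\mathcal{S})}(R_{t+1}^{(\mathcal{S})})^{-1}$ matches (hr.26). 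The remainder of your plan (the transpose relation via (hr.31)-type identities and the continuation by Algorithm 1) is sound modulo this correction.
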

\begin{proof}

Equations (\ref{hr.3}), (\ref{hr.10}), and (\ref{hr.11}) yield
\begin{equation}\label{hr.12}
  \hat{V}^{(\mathcal{S})}D^{(\mathcal{S})}=P_{k+1}^{(\mathcal{S})}C^{(\mathcal{S})}.
\end{equation}
Let  $C=Q_{t+1}R_{t+1}$ be the QR-factorization of $C$. Then we obtain the  $QR$-factorization
\begin{equation}\label{hr.13}
C^{(\mathcal{S})}=Q_{t+1}^{(\mathcal{S})}R_{t+1}^{(\mathcal{S})},
\end{equation}
where $Q_{t+1}^{(\mathcal{S})}=\texttt{diag}(Q_{t+1},Q_{t+1},Q_{t+1},Q_{t+1})$, $R_{t+1}^{(\mathcal{S})}=\texttt{diag}(R_{t+1},R_{t+1},R_{t+1},R_{t+1})$.

Define  a $4n \times 4(t+1)$ matrix
\begin{equation}\label{hr.15}
  \hat{P}_{t+1}^{(\mathcal{S})}:= P_{k+1}^{(\mathcal{S})}Q_{t+1}^{(\mathcal{S})}.
\end{equation}
According to $Q_{t+1}=CR_{t+1}^{-1}$ from \eqref{hr.13},
$$M\hat{P}_{t+1}^{(\mathcal{S})}:=\left[
                                           \begin{array}{rrrr}
                                             (M\hat{P}_{t+1} )^{(0)}&  (M\hat{P}_{t+1} )^{(2)} &  (M\hat{P}_{t+1} )^{(1)} &  (M\hat{P}_{t+1} )^{(3)}\\
                                              -(M\hat{P}_{t+1} )^{(2)}& (M\hat{P}_{t+1} )^{(0)} &  (M\hat{P}_{t+1} )^{(3)}&  -(M\hat{P}_{t+1} )^{(1)}\\
                                              -(M\hat{P}_{t+1} )^{(1)}&  -(M\hat{P}_{t+1} )^{(3)} &  (M\hat{P}_{t+1} )^{(0)}&  (M\hat{P}_{t+1} )^{(2)}\\
                                              -(M\hat{P}_{t+1} )^{(3)} &  (M\hat{P}_{t+1} )^{(1)} & -(M\hat{P}_{t+1} )^{(2)}&  (M\hat{P}_{t+1} )^{(0)}\\
                                           \end{array}
                                         \right], 
$$
where
\begin{align*}
(M\hat{P}_{t+1})^{(0)}&=[(M{P}_{k}^{(\mathcal{S})})(1:m,1:k),(Mp_{k+1}^{(\mathcal{S})})(1:m,1)]CR_{t+1}^{-1},\\
(M\hat{P}_{t+1})^{(2)}&=[(M{P}_{k}^{(\mathcal{S})})(1:m,k+1:2k),(Mp_{k+1}^{(\mathcal{S})})(1:m,2)]CR_{t+1}^{-1},\\
(M\hat{P}_{t+1})^{(1)}&=[(M{P}_{k}^{(\mathcal{S})})(1:m,2k+1:3k),(Mp_{k+1}^{(\mathcal{S})})(1:m,3)]CR_{t+1}^{-1},\\
(M\hat{P}_{t+1})^{(3)}&=[(M{P}_{k}^{(\mathcal{S})})(1:m,3k+1:4k),(Mp_{k+1}^{(\mathcal{S})})(1:m,4)]CR_{t+1}^{-1}.
\end{align*}
Using equations $\eqref{rpld.pa}$ and $\eqref{hr.13}$ to simplify above equations, we have
\begin{align*}
(M\hat{P}_{t+1})^{(0)}
&=[(Q_{k}^{(\mathcal{S})}U_{t}^{(\mathcal{S})}\Sigma_{t}^{(\mathcal{S})})(1:m,1:k),(Mp_{k+1}^{(\mathcal{S})}-\beta_{k}q_{k}^{(\mathcal{S})})(1:m,1)]R_{t+1}^{-1},\\
(M\hat{P}_{t+1})^{(2)}
&=[(Q_{k}^{(\mathcal{S})}U_{t}^{(\mathcal{S})}\Sigma_{t}^{(\mathcal{S})})(1:m,k+1:2k),(Mp_{k+1}^{(\mathcal{S})}-\beta_{k}q_{k}^{(\mathcal{S})})(1:m,2)]R_{t+1}^{-1},\\
(M\hat{P}_{t+1})^{(1)}
&=[(Q_{k}^{(\mathcal{S})}U_{t}^{(\mathcal{S})}\Sigma_{t}^{(\mathcal{S})})(1:m,2k+1:3k),(Mp_{k+1}^{(\mathcal{S})}-\beta_{k}q_{k}^{(\mathcal{S})})(1:m,3)]R_{t+1}^{-1}\\
(M\hat{P}_{t+1})^{(3)}
&=[(Q_{k}^{(\mathcal{S})}U_{t}^{(\mathcal{S})}\Sigma_{t}^{(\mathcal{S})})(1:m,3k+1:4k),(Mp_{k+1}^{(\mathcal{S})}-\beta_{k}q_{k}^{(\mathcal{S})})(1:m,4)]R_{t+1}^{-1}.
\end{align*}
Here $U_{t}^{(\mathcal{S})}:=\texttt{diag}(U_{t},U_{t},U_{t},U_{t})$,  $\Sigma_{t}^{(\mathcal{S})}:=\texttt{diag}(\Sigma_{t},\Sigma_{t},\Sigma_{t},\Sigma_{t})$.

The columns of $\hat{Q}_{t}^{(\mathcal{S})}:={Q}_{k}^{(\mathcal{S})}U_{t}^{(\mathcal{S})}$ are orthonormal.
 We define 
\begin{equation}\label{hr.21}
  \hat{c}_{t}=[\hat{\gamma}_{1},\hat{\gamma}_{2},\cdots,\hat{\gamma}_{t}]^{T},\ \hat{c}_{t}^{(\mathcal{S})}=\texttt{diag}(\hat{c}_{t},\hat{c}_{t},\hat{c}_{t},\hat{c}_{t}),
\end{equation}
\begin{equation*}
 \hat{c}_{t}^{(\mathcal{S})} :=(\hat{Q}_{t}^{(\mathcal{S})})^{T}(-\beta_{k}q_{k}^{(\mathcal{S})} +Mp_{k+1}^{(\mathcal{S})} ).
\end{equation*}
The vector 
\begin{equation}\label{hr.22}
 \hat{\alpha}_{t+1}\hat{q}_{t+1}^{(\mathcal{S})} :=-\beta_{k}q_{k}^{(\mathcal{S})}+Mp_{k+1}^{(\mathcal{S})} -\hat{Q}_{t}^{(\mathcal{S})} \hat{c}_{t}^{(\mathcal{S})}
\end{equation}
is orthogonal to the columns of $\hat{Q}_{t}^{(\mathcal{S})} $, and the scaling factor $\hat{\alpha}_{t+1}>0$ is chosen so that $\hat{q}_{t+1}^{(\mathcal{S})} $ is unitary. 
It follows that
\begin{equation}\label{hr.24}
  M\hat{P}_{t+1}^{(\mathcal{S})}=\hat{Q}_{t+1}^{(\mathcal{S})}\hat{B}_{t+1}^{(\mathcal{S})},
\end{equation}
where $\hat{Q}_{t+1}^{(\mathcal{S})}$ is JRS-symmetric and generated by
\begin{align*}
(\hat{Q}_{t+1}^{(\mathcal{S})})^{(0)}&=[\hat{Q}_{t}^{(\mathcal{S})}(1:m,1:t), \ \hat{q}_{t+1}^{(\mathcal{S})}(1:m,1)],\\
(\hat{Q}_{t+1}^{(\mathcal{S})})^{(2)}&=[\hat{Q}_{t}^{(\mathcal{S})}(1:m,t+1:2t), \ \hat{q}_{t+1}^{(\mathcal{S})}(1:m,2)],\\
(\hat{Q}_{t+1}^{(\mathcal{S})})^{(1)}&=[\hat{Q}_{t}^{(\mathcal{S})}(1:m,2t+1:3t), \ \hat{q}_{t+1}^{(\mathcal{S})}(1:m,3)],\\
(\hat{Q}_{t+1}^{(\mathcal{S})})^{(3)}&=[\hat{Q}_{t}^{(\mathcal{S})}(1:m,3t+1:4t), \ \hat{q}_{t+1}^{(\mathcal{S})}(1:m,4)].
\end{align*}
We see that $\hat{B}_{t+1}^{(\mathcal{S})}$ is the product of two upper triangular matrices, one of which has nonzero entries only on the diagonal and in the last column.  In particular, its  diagonal blocks  are upper triangular. 

Similar to  to $(\ref{Ritz.15})$, we can derive an analogue  decomposition
\begin{equation}\label{hr.28}
  M^{T}{\hat{Q}}_{t}^{(\mathcal{S})}=M^{T}{Q}_{t}^{(\mathcal{S})}U_{t}^{(\mathcal{S})}=P_{k+1}^{(\mathcal{S})}(B_{k,k+1}^{(\mathcal{S})})^{T}U_{t}^{(\mathcal{S})}=P_{k+1}^{(\mathcal{S})}(V_{t})^{(\mathcal{S})}\Sigma_{t}^{(\mathcal{S})}.
\end{equation}
Multiplying
\begin{equation*}\label{hr.29}
  B_{k,k+1}V_{t}=[B_{k},\beta_{k}e_{k}]V_{t}=U_{t}\Sigma_{t}
\end{equation*}
 by $B_{k}^{-1}$ from the left-hand side, we get
\begin{equation}\label{hr.30}
  [I_{k},\beta_{k}B_{k}^{-1}e_{k}]V_{t}=B_{k}^{-1}U_{t}\Sigma_{t}.
\end{equation}
So that 
\begin{equation}\label{hr.31}
  V_{t}=\left[
           \begin{array}{cc}
             B_{k}^{-1}U_{t}\Sigma_{t} & -\beta_{k}B_{k}^{-1}e_{k} \\
             0 & 1 \\
           \end{array}
         \right]\left[
                        \begin{array}{c}
                          I_{t} \\
                          e_{k+1}^{T}V_{t} \\
                        \end{array}
                      \right].
\end{equation}
Substituting  \eqref{hr.31} into \eqref{hr.28} and  applying \eqref{hr.13}  yield
\begin{equation}\label{hr.34}
  M^{T}{\hat{Q}}_{t}^{(\mathcal{S})}=\hat{P}_{t+1}^{(\mathcal{S})}(\hat{B}_{t,t+1}^{(\mathcal{S})})^{T},
\end{equation}
where $\hat{B}_{t,t+1}$ is the leading $t\times(t+1)$ submatrix of the upper triangular matrix $\hat{B}_{t+1}$ in $(\ref{hr.24})$.

Considering the last block column of each block of $M^{T}{\hat{Q}}_{t+1}^{(\mathcal{S})}$. Multiplying $M^{T}\hat{q}_{t+1}^{(\mathcal{S})}$ by $(\hat{P}_{t+1}^{(\mathcal{S})})^{T}$ from the left-hand side, \eqref{hr.24} shows that
\begin{equation}\label{hr.35}
  (\hat{P}_{t+1}^{(\mathcal{S})})^{T}M^{T}\hat{q}_{t+1}^{(\mathcal{S})}=(\hat{B}_{t+1}^{(\mathcal{S})})^{T}(\hat{Q}_{t+1}^{(\mathcal{S})})^{T}\hat{q}_{t+1}^{(\mathcal{S})}=(\hat{B}_{t+1}^{(\mathcal{S})})^{T}e_{t+1}^{(\mathcal{S})}=\hat{\alpha}_{t+1}e_{t+1}^{(\mathcal{S})},
\end{equation}
where $\hat{\alpha}_{t+1}$ denotes the last diagonal entry of each block of $\hat{B}_{t+1}^{(\mathcal{S})}$. Thus,
\begin{equation}\label{hr.36}
  M^{T}\hat{q}_{t+1}^{(\mathcal{S})}=\hat{\alpha}_{t+1}\hat{p}_{t+1}^{(\mathcal{S})}+\breve{r}_{t+1}^{(\mathcal{S})},
\end{equation}
where $\breve{r}_{t+1}^{(\mathcal{S})}=\texttt{diag}(\breve{r}_{t+1},\breve{r}_{t+1},\breve{r}_{t+1},\breve{r}_{t+1})$ and $(\hat{P}_{t+1}^{(\mathcal{S})})^{T}\breve{r}_{t+1}^{(\mathcal{S})}=0$. Combining $(\ref{hr.34})$ and $(\ref{hr.36})$ yields
\begin{equation}\label{hr.38}
   M^{T}\hat{Q}_{t+1}^{(\mathcal{S})}= \hat{P}_{t+1}^{(\mathcal{S})}(\hat{B}_{t+1}^{(\mathcal{S})})^{T}+\breve{r}^{(\mathcal{S})}_{t+1}(e_{t+1}^{(\mathcal{S})})^{T}.
\end{equation}

Applying the decompositions $(\ref{hr.24})$ and $(\ref{hr.38})$, we can proceed  to compute the decompositions
\eqref{hr.39}.
\end{proof}

\section{Applications to Color Image Processing}\label{s:cip}
In the color image processing, one of the most important targets is   to compute the optimal low-rank approximation to a color image. Based on the quaternion representation of color image,   such approximation can be reconstructed by a few  of  dominant  singular triplets of a quaternion matrix. 
In this section, we apply the multi-symplectic Lanczos method to compute $k$ largest or smallest singular  triplets of quaternion matrices, which directly leads to its applications to color image processing.

As in \cite{jns19nla,jns19na, jnw19, S.C.Pei1997, S.J.Sangwine1996}, we represent a color image with the spatial resolution of $m\times n$ pixels   by an $m\times n$ pure quaternion matrix,
\begin{equation*}
 \Aq =A^{(0)}+ A^{(1)}\iq + A^{(2)}\jq + A^{(3)}\kq,
\end{equation*}
where $A^{(0)}=( a^{(0)}_{ij})$, $ A^{(1)}=( a^{(1)}_{ij}),~  A^{(2)}=( a^{(2)}_{ij}),~ A^{(3)}=( a^{(3)}_{ij})\in\mathbb{R}^{m\times n} $,  and  $ a^{(1)}_{ij}$, $ a^{(2)}_{ij}$ and $ a^{(3)}_{ij}$ are  respectively the red, green and blue pixel values at the location $(i,j)$ in the image,  $a^{(0)}_{ij}\equiv 0$.
The real counterpart  of $\Aq$ is
\begin{equation*}\label{rc}
  A^{(\mathcal{S})}:=\left[
  \begin{array}{rrrr}
    A^{(0)} & A^{(2)} & A^{(1)} &  A^{(3)} \\
    -A^{(2)} & A^{(0)} &  A^{(3)} & -A^{(1)} \\
    -A^{(1)} & - A^{(3)} & A^{(0)} & A^{(2)} \\
    - A^{(3)} &A^{(1)} & -A^{(2)} & A^{(0)} \\
  \end{array}
\right].
\end{equation*}
Clearly, $A^{(\mathcal{S})}$ is JRS-symmetric. We can apply the  the multi-symplectic Lanczos method on $A^{(\mathcal{S})}$ (see  Section \ref{s:MSL}) to obtain the low-rank approximations of the color image represented by $\Aq$.

In order to get the$k$ largest or smallest singular triplets of the quaternion matrix $\Aq$, we project its real counterpart matrix $A^{(\mathcal{S})}$ into low dimensional space by using multi-symplectic Lanczos bidiagonalization
\begin{equation*}\label{rpld.pa2}
  A^{(\mathcal{S})}P^{(\mathcal{S})}_{k}=Q^{(\mathcal{S})}_{k}B^{(\mathcal{S})}_{k},
\
   (A^{(\mathcal{S})})^{T}Q^{(\mathcal{S})}_{k}=P^{(\mathcal{S})}_{k}(B^{(\mathcal{S})}_{k})^{T}+r^{(\mathcal{S})}_{k}(e_{k}^{(\mathcal{S})})^{T}.
\end{equation*}
 Then we obtain the approximate triplets of $A^{(\mathcal{S})}$ by computing the SVD of $B_{k}$ and applying matrix $P^{(\mathcal{S})}_{k}$ and $Q^{(\mathcal{S})}_{k}$.
Combining (\ref{svd.Bk}) with \eqref{rpld.pa}, we get
\begin{equation}\label{svd.rA2}
   A^{(\mathcal{S})}\tilde{v}_{j}^{(\mathcal{S})}= \tilde{u}_{j}^{(\mathcal{S})}\tilde{\sigma}_{j}^{(\mathcal{S})}, \ (A^{(\mathcal{S})})^{T}\tilde{u}_{j}^{(\mathcal{S})}=\tilde{v}_{j}^{(\mathcal{S})}\tilde{\sigma}_{j}^{(\mathcal{S})}+r_{k}^{(\mathcal{S})}(e_{k}^{(\mathcal{S})})^{T}u_{j}^{(\mathcal{S})}, 1\leq j\leq k.
\end{equation}
The equations in (\ref{svd.rA2}) suggest that an approximate singular triplet $\{{\tilde{\sigma}_{j}^{(\mathcal{S})},\tilde{u}_{j}^{(\mathcal{S})},\tilde{v}_{j}^{(\mathcal{S})}}\}$ is accepted as a singular triplet of $A^{(\mathcal{S})}$ if $r_{k}^{(\mathcal{S})}(e_{k}^{(\mathcal{S})})^{T}u_{j}^{(\mathcal{S})}$ is sufficiently small.
Let $\tilde{\uq}_{j}$ and $\tilde{\vq}_{j}$ be the quaternion vectors with real counterparts $\tilde{u}_{j}^{(\mathcal{S})}$ and $\tilde{v}_{j}^{(\mathcal{S})}$, then $(\tilde{\sigma}_{j},\tilde{\uq}_{j}, \tilde{\vq}_{j} )$ is an approximated singular triplet of quaternion matrix $\Aq$. 
With denoting  
 $$\Uq_k=[\tilde{\uq}_{1},\cdots,\tilde{\uq}_{k} ],~S_k=\texttt{diag}(\tilde{\sigma}_{1},\cdots,\tilde{\sigma}_{k}), ~\Vq_k=[\tilde{\vq}_{1},\cdots,\tilde{\vq}_{k} ],$$
 we obtain a low-rank approximation of $\Aq$ by 
 \begin{equation}\label{e:Ak}
  \Aq_{k}=\Uq_{k}S_{k}\Vq_{k}^{*}.
\end{equation}

The low-rank approximations, defined by \eqref{e:Ak}, have distinct physical meanings in color image processing. The  approximation,  generated by taking  large  singular triplets,  reflects the low frequency information of the color image; meanwhile, we reconstruct the high  frequency information  by taking  small  singular triplets. For instance, the left graph in Figure \ref{fig:toyexample} is a color image with an additional  Gaussian noise; 
the middle graph in Figure \ref{fig:toyexample} denotes the low frequency information, reconstructed by using $k$ largest  singular triplets;  and the right graph in Figure \ref{fig:toyexample} denotes the high  frequency information, reconstructed by using $k$ smallest  singular triplets. 
With this advantage in mind, we can apply the proposed  multi-symplectic Lanczos method to solve the practical problems  from color image processing, such as color face recognition, color video compressing and reconstruction, and many others.

 \begin{figure}[!h]
  \begin{center}
\includegraphics[height=3.7cm, width=3.6cm]{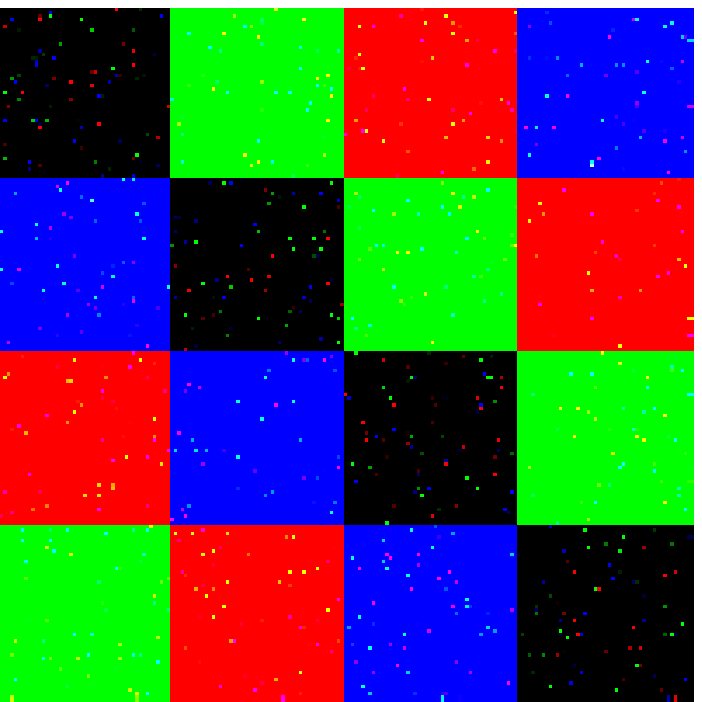}
\includegraphics[height=3.7cm, width=3.6cm]{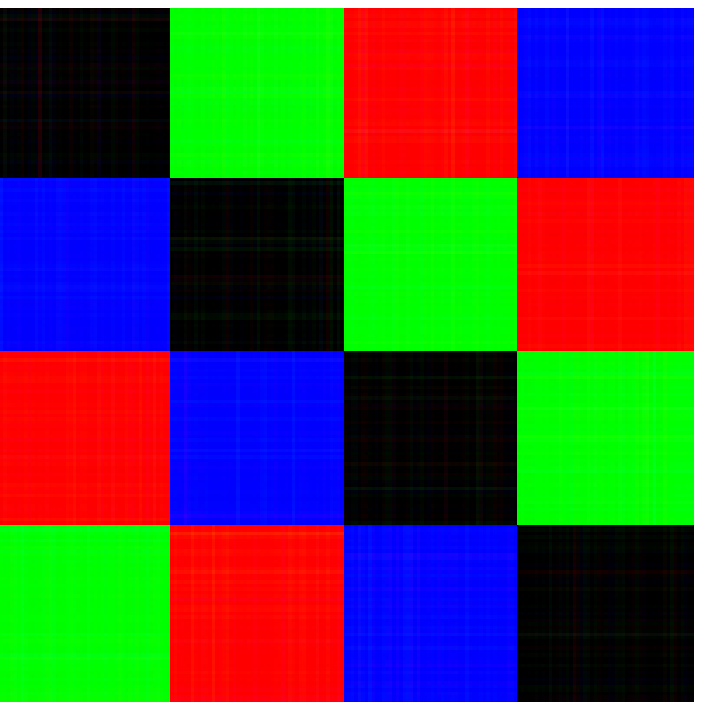}
\includegraphics[height=3.7cm, width=3.6cm]{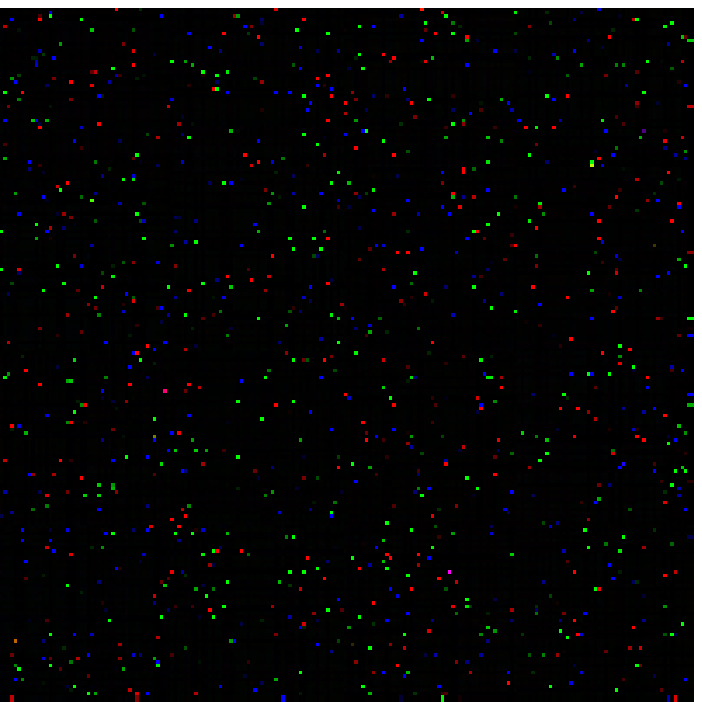}
 \end{center}
 \caption{Low-rank approximations of color image.  A noised color image $\Aq$ of size $200\times 200$ (left), the reconstruction  by using   $k$ largest singular triplets (middle), and the reconstruction by using $k$ smallest singular triplets (right).}\label{fig:toyexample}
\end{figure}

\section{Numerical Experiments}\label{s:ne}
In this section, we compare our algorithms with  five state-of-the-art algorithms  by several numerical examples.  These  algorithms are listed as follows.
\begin{itemize}
\item \texttt{irlba(R)}--implicitly restarted Lanczos bidiagonalization method (Ritz vector) \cite{bare05}.
\item \texttt{irlba(H)}---implicitly restarted Lanczos bidiagonalization method (harmonic vector) \cite{bare05}.
\item \texttt{irlbaMS(R)}--implicitly restarted  multi-symplectic Lanczos bidiagonalization method (Ritz vector) in Section $\ref{s:augritz}$.
\item  \texttt{irlbaMS(H)}--implicitly restarted  multi-symplectic Lanczos bidiagonalization method (harmonic vector) in Section $\ref{augHritz}$.
\item \texttt{lansvdQ}--the Lanczos method for partial quaternion singular value \cite{jns19na}.
\item \texttt{eigQ}--the quaternion eigenvalue decomposition \cite{jwl13}.
\item \texttt{svdQ}--the quaternion singular value decomposition \cite{Li2014,jns19na}.
\end{itemize}   
\bigskip

 All the experiments were performed in Matlab on a personal computer with 3.20
GHz Intel Core i5-3470 processor and 8 GB memory. 
According to equation \eqref{svd.rA}, our numerical method outputs $\{\sigma_{j}^{(\mathcal{S})}, \tilde{u}_{j}^{(\mathcal{S})}, \tilde{v}_{j}^{(\mathcal{S})}\}$ as a singular triplet of $M^{(\mathcal{S})}$ if 
$$
 \beta_{k} |(e_{k}^{(\mathcal{S})})^{T}u_{j}^{(\mathcal{S})}|\leq \delta \|M^{(\mathcal{S})}\|
$$
for a user-specified value of $\delta$, where we have used $\beta_{k}=\|r_{k}^{(S)}\|$. The quantity $\|M^{(\mathcal{S})}\|$  is easily approximated by the singular value $\sigma_{1}^{(B_{k})}$ of largest magnitude of the bidiagonal matrix $B_{k}$. The computation of $\sigma_{1}$ is inexpensive because the matrix $B_{k}$ is small.
The residuals are calculated by
\begin{equation*}
  \text{Residual}=\|M^{(\mathcal{S})} V_k^{(\mathcal{S})}-U_k^{(\mathcal{S})}\Sigma_k^{(\mathcal{S})}\|_F,
\end{equation*}
where $U_k^{(\mathcal{S})}, \Sigma_k^{(\mathcal{S})}$ and $V_k^{(\mathcal{S})}$  are  formed by $k$ calculated singular triplets.  
The  parameters are defined as follows:
\vskip6pt

\begin{tabular}{|l|l|l|}
  \hline
   Notation &Meaning & Default  Value \\ \hline
 $k$ &Number of desired singular triplets & $10$ \\
  $maxit$ & Maximum number of restarts& $2000$ \\
  $\delta$ & Tolerance value& 1.0E-10 \\
  $m_b$ & Size of the Lanczos bidiagonal matrix $B_{k}$ &$\max(2k,40)$\\
  \hline
\end{tabular}

\vskip18pt
\begin{example}[Sparse and JRS-Symmetric Matrix]\label{sparse}
 In this example,  let  the large-scale  matrix $M^{(\mathcal{S})} \in\mathbb{R}^{4n\times 4n}$   be of the form \eqref{e:Mstr} and let  four sparse matrices $M^{(0)}$, $M^{(1)}$, $M^{(2)}$ and $M^{(3)}$  be  the order-$n$ principle submatrices  of  $\texttt{bcspwr10}$, $\texttt{af23560}$, $\texttt{rw5151}$ and  $\texttt{rdb5000}$ (from Matrix Market\footnote{https://math.nist.gov/MatrixMarket}).
 Setting $n=3000$ and $m_b=40$, we compute  the $k$ largest and smallest singular triplets of $M^{(\mathcal{S})}$ by algorithms \texttt{irlba(R)}, \texttt{irlba(H)}, \texttt{irlbaMS}(R) and \texttt{irlbaMS}(H). The CPU times and accuracies are listed in Table \ref{Ex4_max_diff} and \ref{Ex4_min_diff}, respectively. The notation  $n.c.$ means that the algorithm does not converge in $2000$ restarts.

From these numerical results, we can see that the multi-symplectic algorithms, \texttt{irlbaMS(R)} and \texttt{irlbaMS(H)}, are fast and more accuracy than the standard methods,  \texttt{irlba(R)} and \texttt{irlba(H)}.  Obviously, \texttt{irlbaMS(H)} performs better than \texttt{irlbaMS(R)}  on computing the partial smallest singular triplets, while the later  performs better on the calculating the partial  largest singular triplets. 
In the case of computing the $k$ smallest singular triplets,  both \texttt{irlba(R)} and \texttt{irlba(H)} do not converge after $2000$ restarts  for $k=1$ and $5$, and need more than  $1300$ restarts to converge when $k=10$. In Figure \ref{fig:EX2_convergence}, we draw the convergence curves  of the first $10$ singular triplets, i.e., $\beta_{k} |(e_{k}^{(S)})^{T}u_{j}^{(S)}|$. These convergence curves indicate that the multi-symplectic algorithms converge   fast and stably, which indicate the advantages of  the structure-preserving transformations.

\begin{table}[!h]
\centering
\caption{Calculating the first $k$ largest  singular triplets}
\vskip6pt

\begin{tabular}{|c|c|c|c|c|}
  \hline
 $k$ &$Algorithm$  & $Iter$ & $CPU\ time$ & $Residual$  \\
  \hline
                   &$\texttt{irlba(R)}$ &1& 2.0543& 2.5685e-12 \\
                   &$\texttt{irlba(H)}$ & 1& 2.0722 &1.6620e-12 \\
  $k=1$
                   &$\texttt{irlbaMS(R)}$ &2&\textbf{0.5568}& \textbf{5.6299e-13}\\
                   &$\texttt{irlbaMS(H)}$ &2 &0.6155&6.2253e-13 \\
 \hline
                    &$\texttt{irlba(R)}$ &2&3.1450 & 2.6519e-12 \\
                       &$\texttt{irlba(H)}$&2 &3.1055& 2.6195e-12 \\
  $k=5$
                      &$\texttt{irlbaMS(R)}$ &5& \textbf{1.0656} & \textbf{2.4038e-12}\\
                     &$\texttt{irlbaMS(H)}$ &5&1.3251& 2.8619e-12\\
  \hline

                      &$\texttt{irlba(R)}$ &3&  4.5286& 2.6184e-12\\
                       &$\texttt{irlba(H)}$ &3&4.4223&2.4965e-12 \\
  $k=10$
                      &$\texttt{irlbaMS(R)}$ &8&\textbf{1.6678} & \textbf{2.4815e-12}\\
                     &$\texttt{irlbaMS(H)}$ &8&1.7613& 4.8367e-12\\
  \hline
               &$\texttt{irlba(R)}$ &13& 5.5461& 1.5729e-08 \\
               &$\texttt{irlba(H)}$ &12& 5.6529& 1.6638e-08\\
  $k=20$
               &$\texttt{irlbaMS(R)}$ &28& \textbf{2.3643} & \textbf{7.9533e-12}\\
               &$\texttt{irlbaMS(H)}$ &27& 2.5107 & 1.3867e-11\\
  \hline
\end{tabular}

\label{Ex4_max_diff}
\end{table}

\begin{table}[!h]
\centering
\caption{Calculating the first $k$ smallest singular triplets}

\vskip6pt

\begin{tabular}{|c|c|c|c|c|}
  \hline
 $k$ &$Algorithm$  & $Iter$ & $CPU\ time$ & $Residual$  \\
  \hline
                   &$\texttt{irlba(R)}$ &n.c.&- & - \\
                   &$\texttt{irlba(H)}$ & n.c.& - &-\\
  $k=1$
                   &$\texttt{irlbaMS(R)}$ &101&\textbf{14.7813}&6.9991e-12\\
                   &$\texttt{irlbaMS(H)}$ &101&15.9442&\textbf{2.7430e-12} \\
 \hline
                    &$\texttt{irlba(R)}$ &n.c.&-& - \\
                       &$\texttt{irlba(H)}$&n.c. &-& - \\
  $k=5$
                      &$\texttt{irlbaMS(R)}$ &56&\textbf{ 8.4573} & 4.1946e-12\\
                     &$\texttt{irlbaMS(H)}$ &57& 8.7713& \textbf{3.7882e-12} \\
  \hline

                    &$\texttt{irlba(R)}$ &1867&2.2191e+03&2.7272e-02\\
                   &$\texttt{irlba(H)}$ &1357&1.6563e+03&  1.5109e-07  \\
  $k=10$
                      &$\texttt{irlbaMS(R)}$ &50 &\textbf{6.4715} & 8.1583e-12\\
                     &$\texttt{irlbaMS(H)}$ &48&6.5680& \textbf{3.5577e-12}\\
  \hline
               &$\texttt{irlba(R)}$ &814& 2.9945e+02& 1.1936e+01 \\
               &$\texttt{irlba(H)}$ &929&3.5441e+02& 6.3242e-05\\
  $k=20$
               &$\texttt{irlbaMS(R)}$ &154&\textbf{10.6774}& 6.4510e-11\\
               &$\texttt{irlbaMS(H)}$ &145&11.3145 &\textbf{4.6488e-11}\\
  \hline
\end{tabular}

\label{Ex4_min_diff}
\end{table}

 \begin{figure}[!h]
  \begin{center}
\includegraphics[height=5cm, width=6cm]{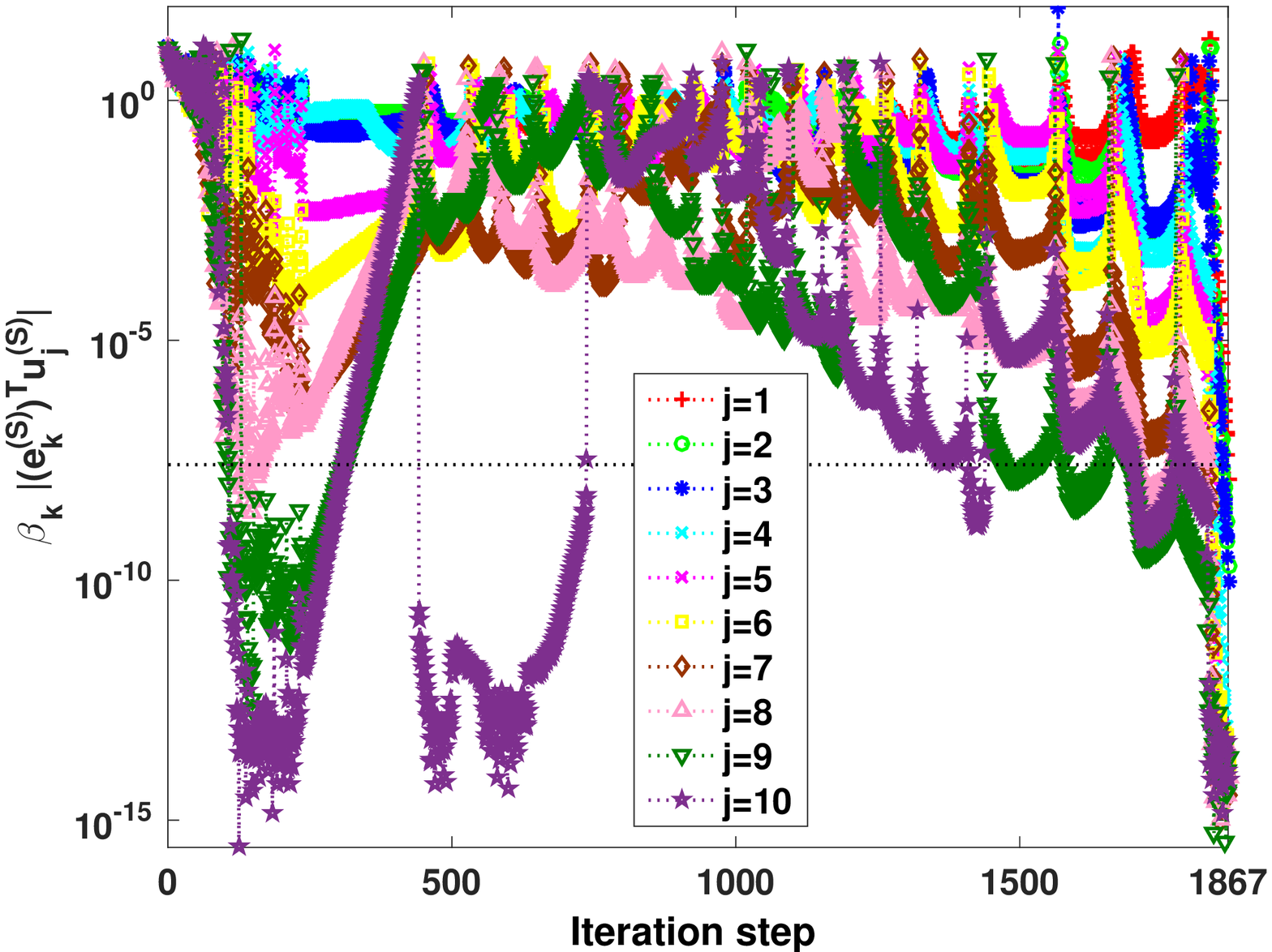}
\includegraphics[height=5cm, width=6cm]{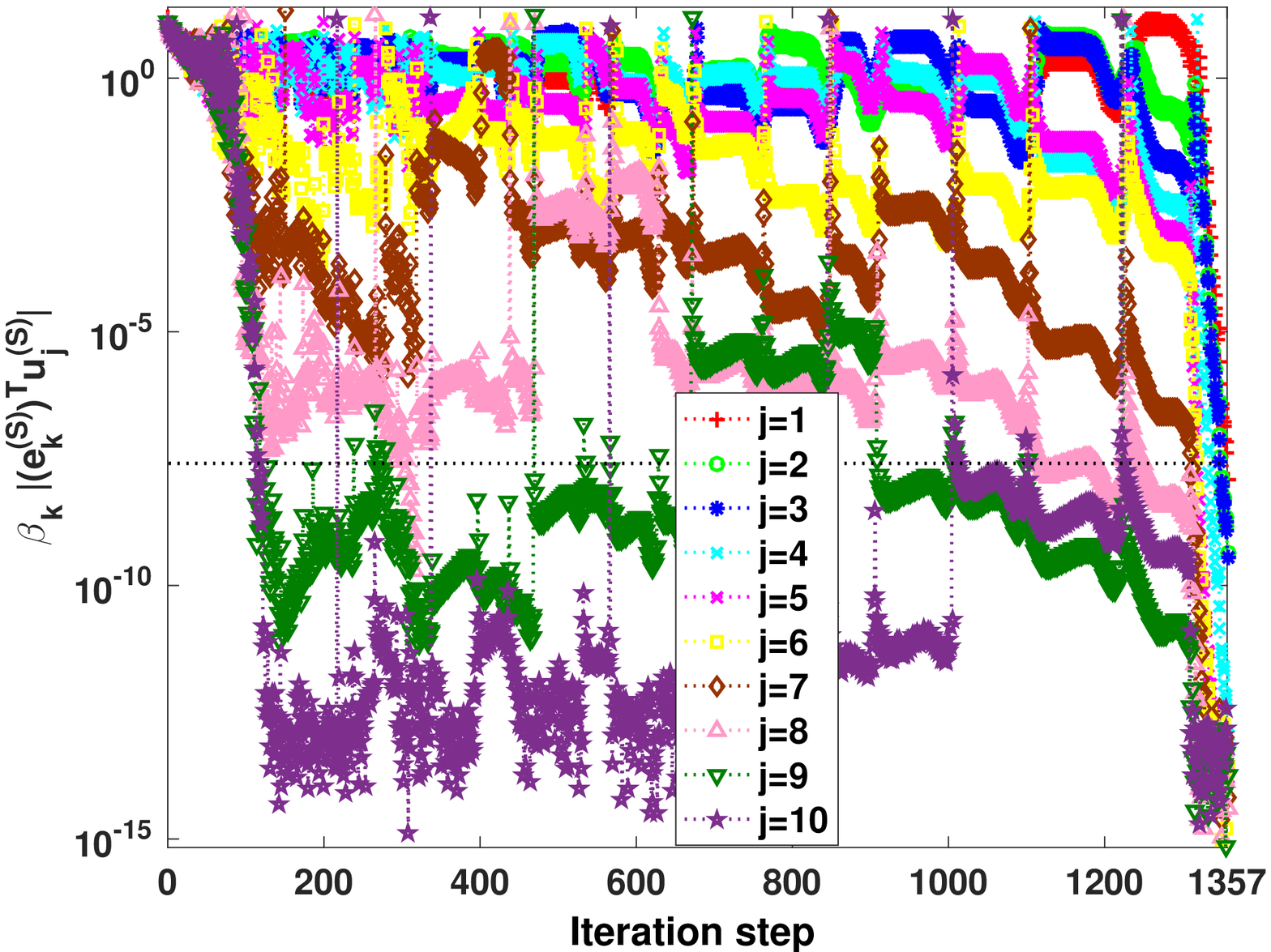}
\includegraphics[height=5cm, width=6cm]{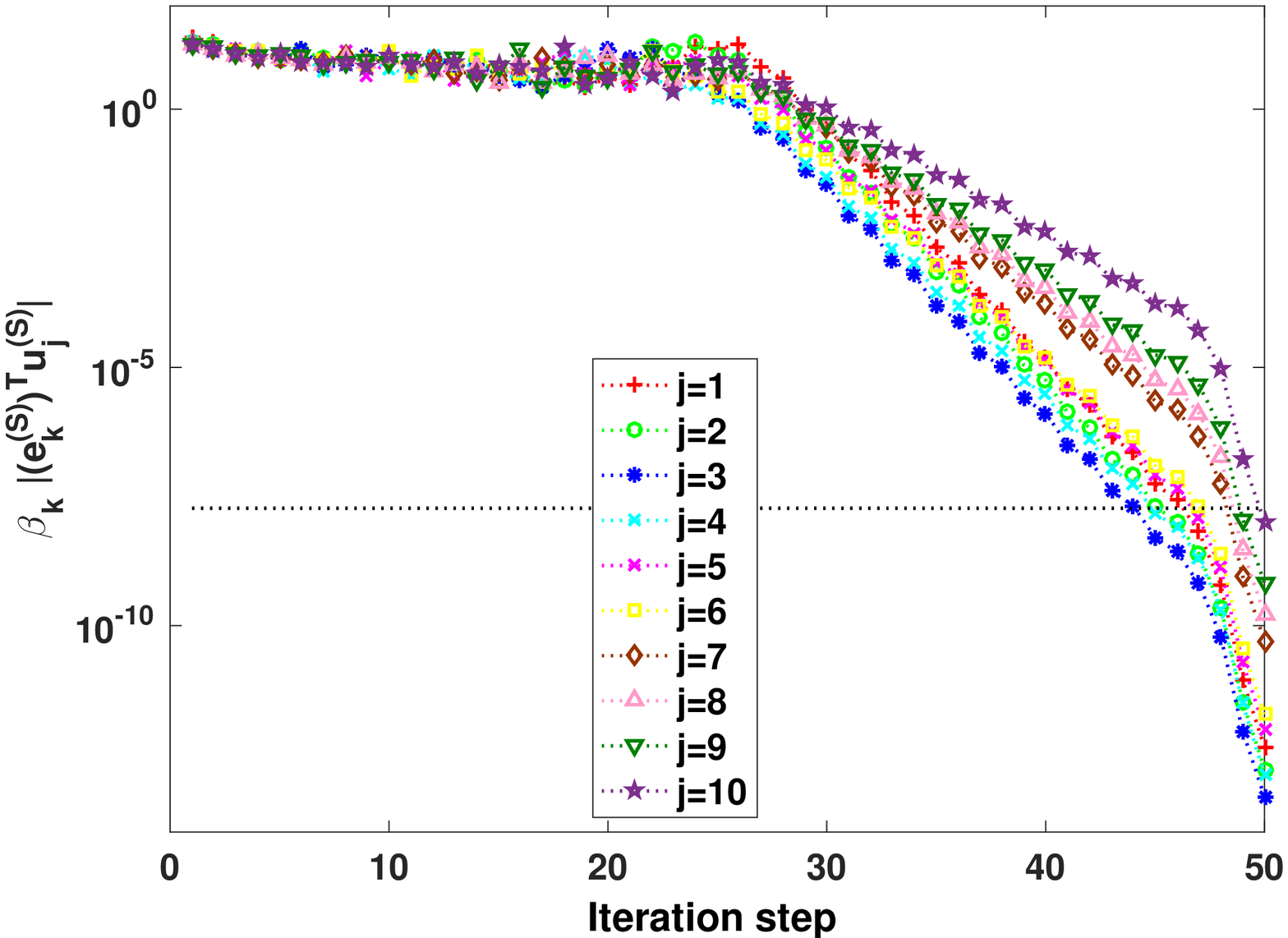}
\includegraphics[height=5cm, width=6cm]{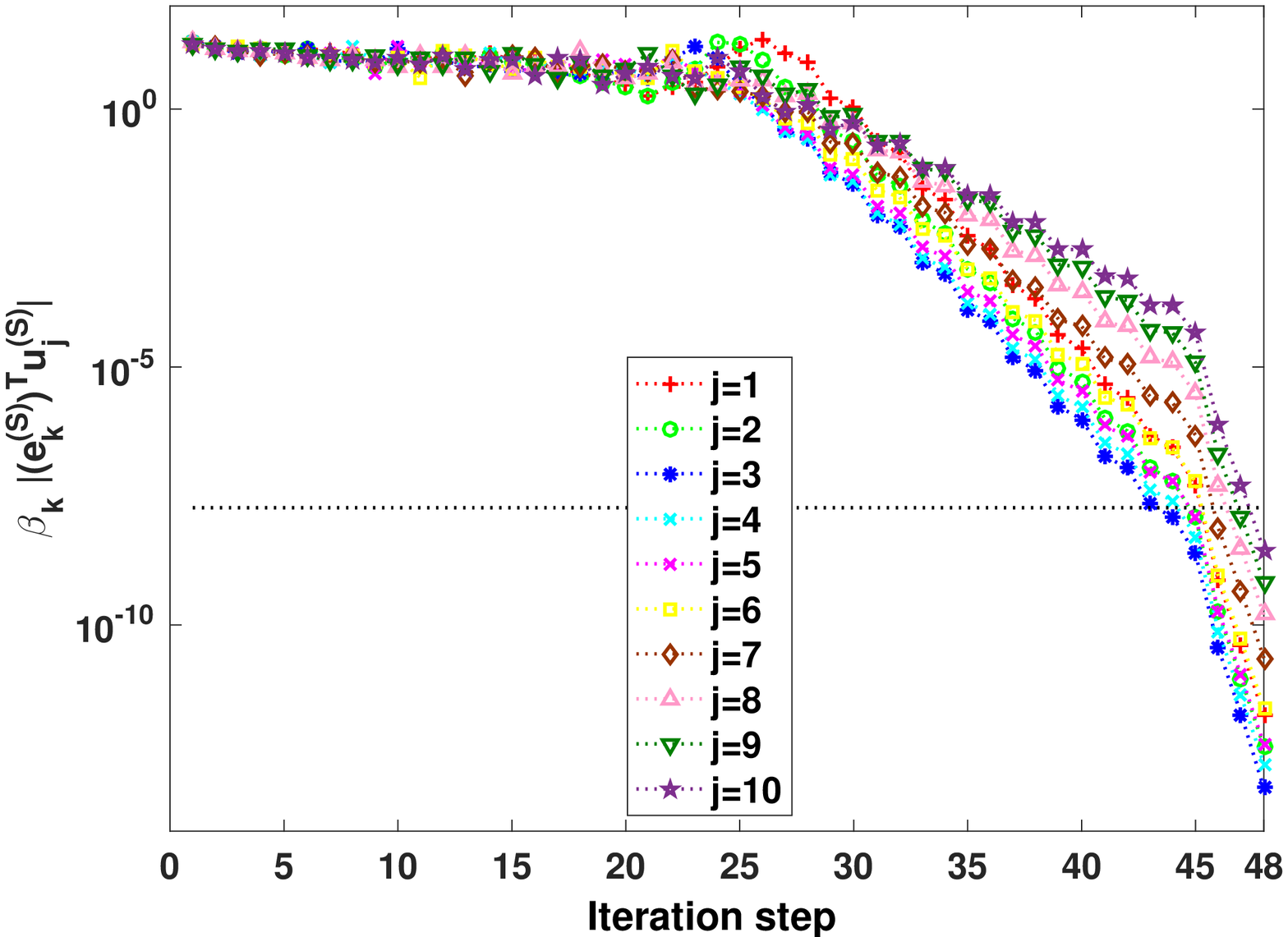}
 \end{center}
 \caption{The convergence curves of the first $10$ largest singular triplets by   \texttt{irlba(R)} (top left), \texttt{irlba(H)} (top right) , \texttt{irlbaMS}(R) (bottom left) and \texttt{irlbaMS}(H) (bottom right)}\label{fig:EX2_convergence}
\end{figure}

\end{example}

\vskip18pt
\begin{example}[Dense Quaternion Matrices]
In this example, the quaternion matrix $\Aq\in \mathbb{Q}^{m\times n}$ is randomly generated as in \cite{jns19na}. 
The algorithms \texttt{irlbaMS(R)}, \texttt{irlbaMS(H)}, \texttt{irlba(R)}, \texttt{irlba(H)}, \texttt{svdQ}, and \texttt{lansvdQ} are applied to compute the $k$ largest or smallest singular triplets of $\Aq$. The parameters  are set as follows. $m=2000,~n=200:200:2000$ and $m_b=40$.

In Figure \ref{fig:EX3_large_cpu2residual2},  we compute the first $k$ ($=1$ or $10$) largest  singular triplets of $\Aq$. In both cases,  \texttt{irlbaMS(R)} and \texttt{irlbaMS(H)} are  faster  than  \texttt{irlba(R)},  \texttt{irlba(H)} and \texttt{svdQ};  \texttt{lansvdQ} is the fastest one.  
When $k=1$, all five algorithms have comparable   residual norms; when $k$ increases, say $k=10$,  the residues of \texttt{lansvdQ} are larger than  those of other methods. 
In Figure \ref{fig:EX3_small_cpu2residual2}, we compute the first $k$ ($=1$ or $10$) smallest singular triplets.  \texttt{irlbaMS(R)} and \texttt{irlbaMS(H)} are indicated to be  faster and more stable  than \texttt{irlba(R)},  \texttt{irlba(H)} and \texttt{svdQ} in general.    
  We can conclude from Figure \ref{fig:EX3_large_cpu2residual2} and Figure \ref{fig:EX3_small_cpu2residual2} that  the multi-symplectic methods, \texttt{irlbaMS(R)} and \texttt{irlbaMS(H)},  perform better than the traditional methods,  \texttt{irlba(R)} and \texttt{irlba(H)}. 

 \begin{figure}[!h]
  \begin{center}
\includegraphics[height=5cm, width=6cm]{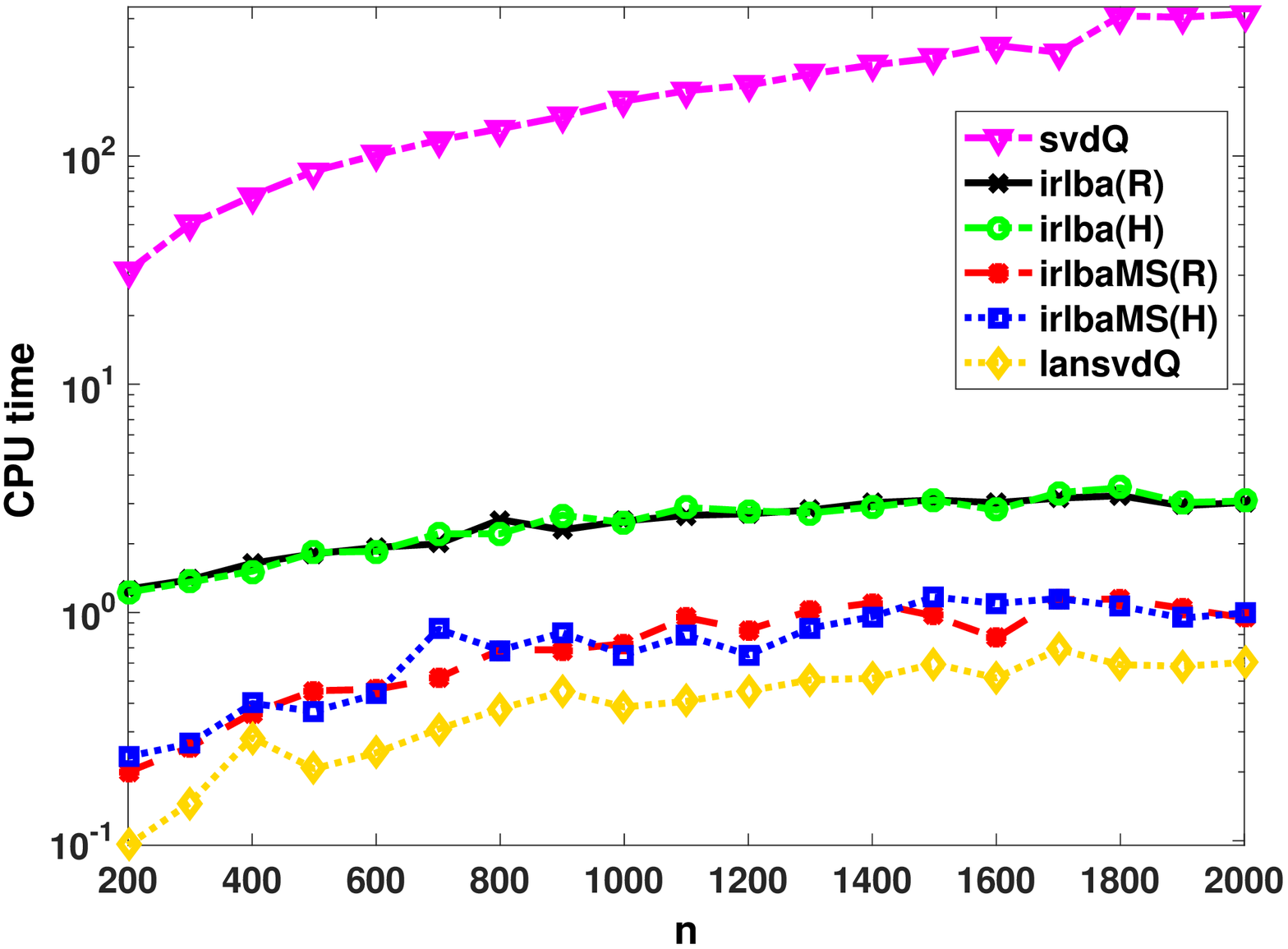}
\includegraphics[height=5cm, width=6cm]{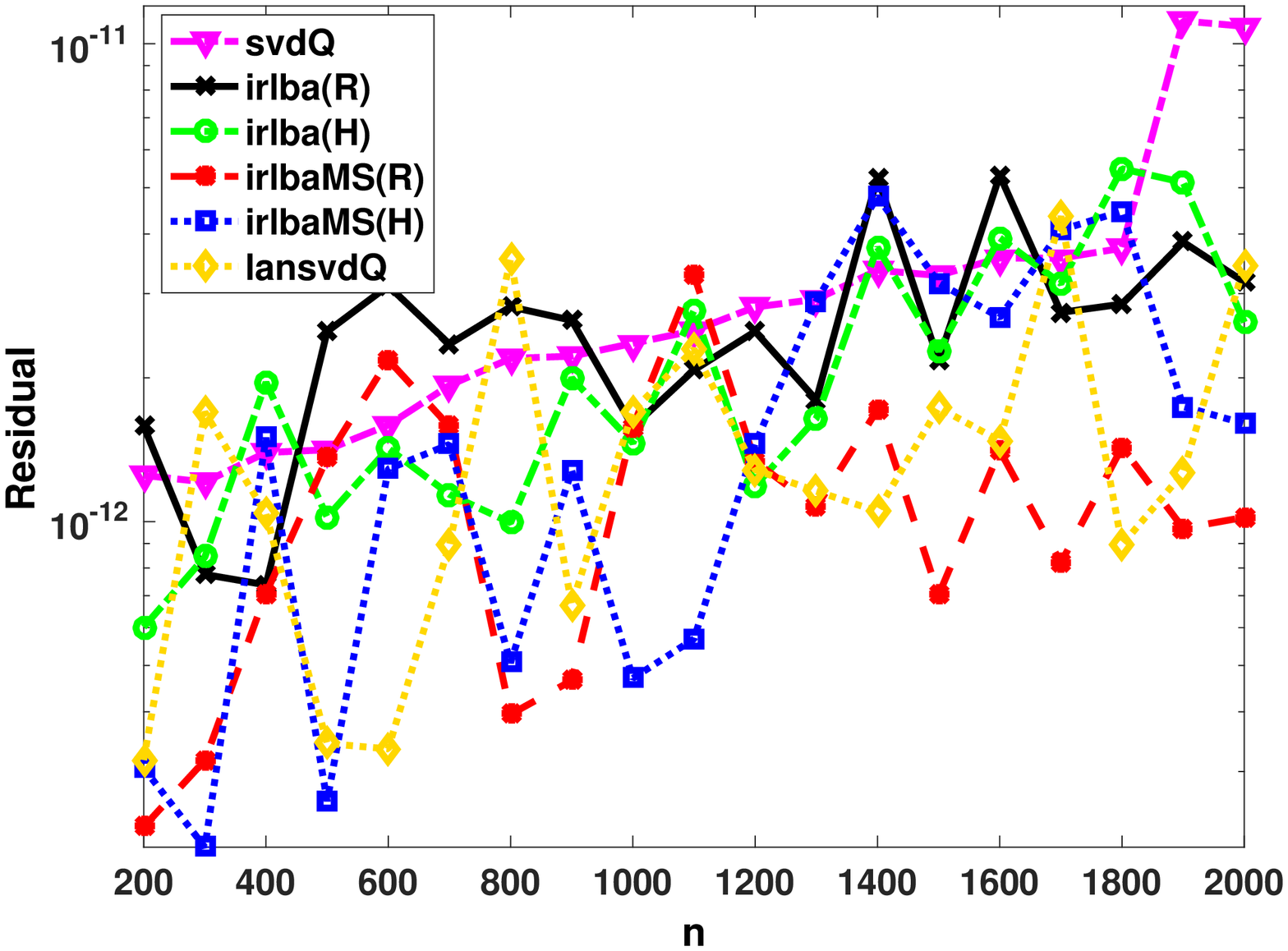}
\includegraphics[height=5cm, width=6cm]{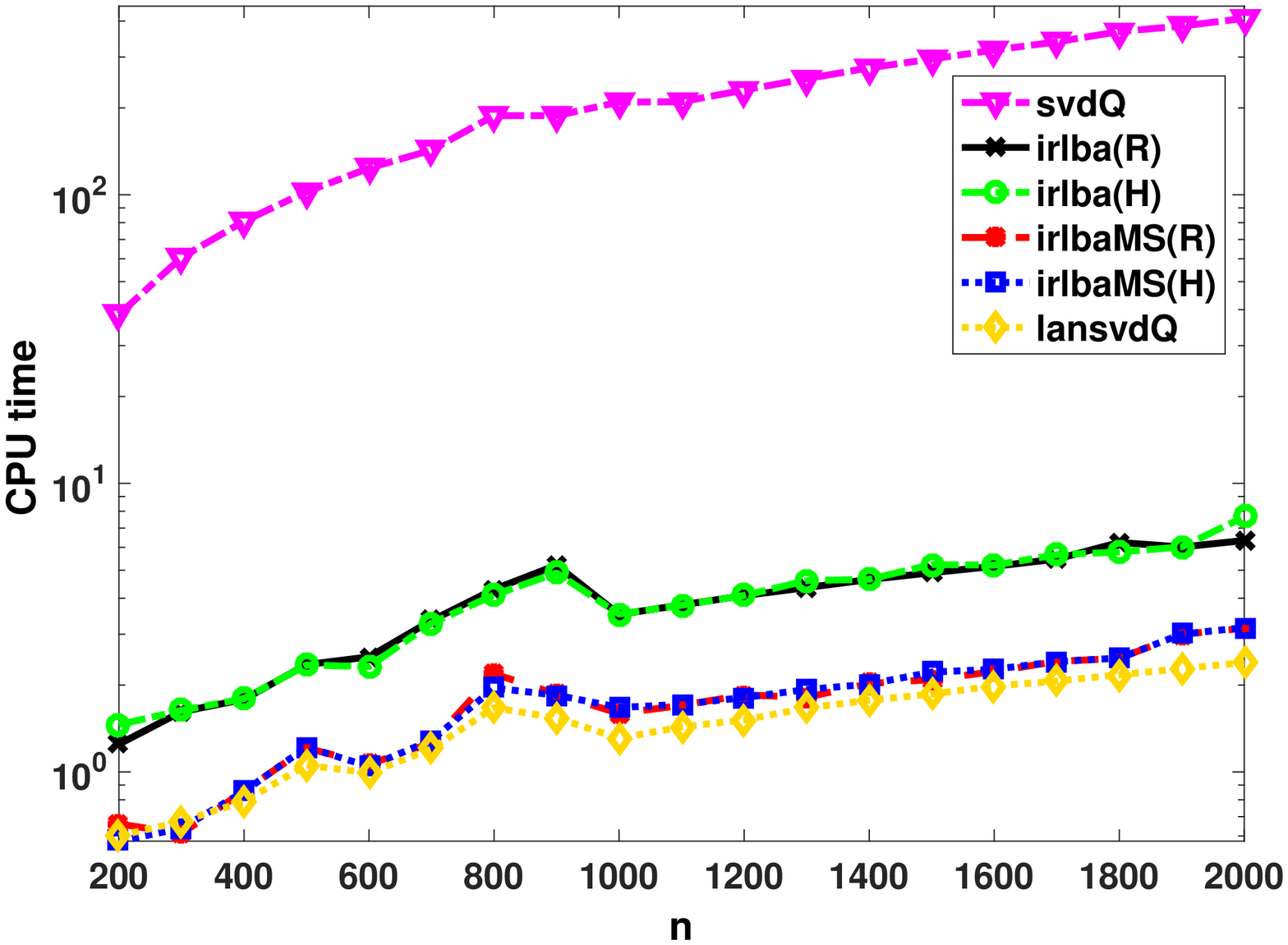}
\includegraphics[height=5cm, width=6cm]{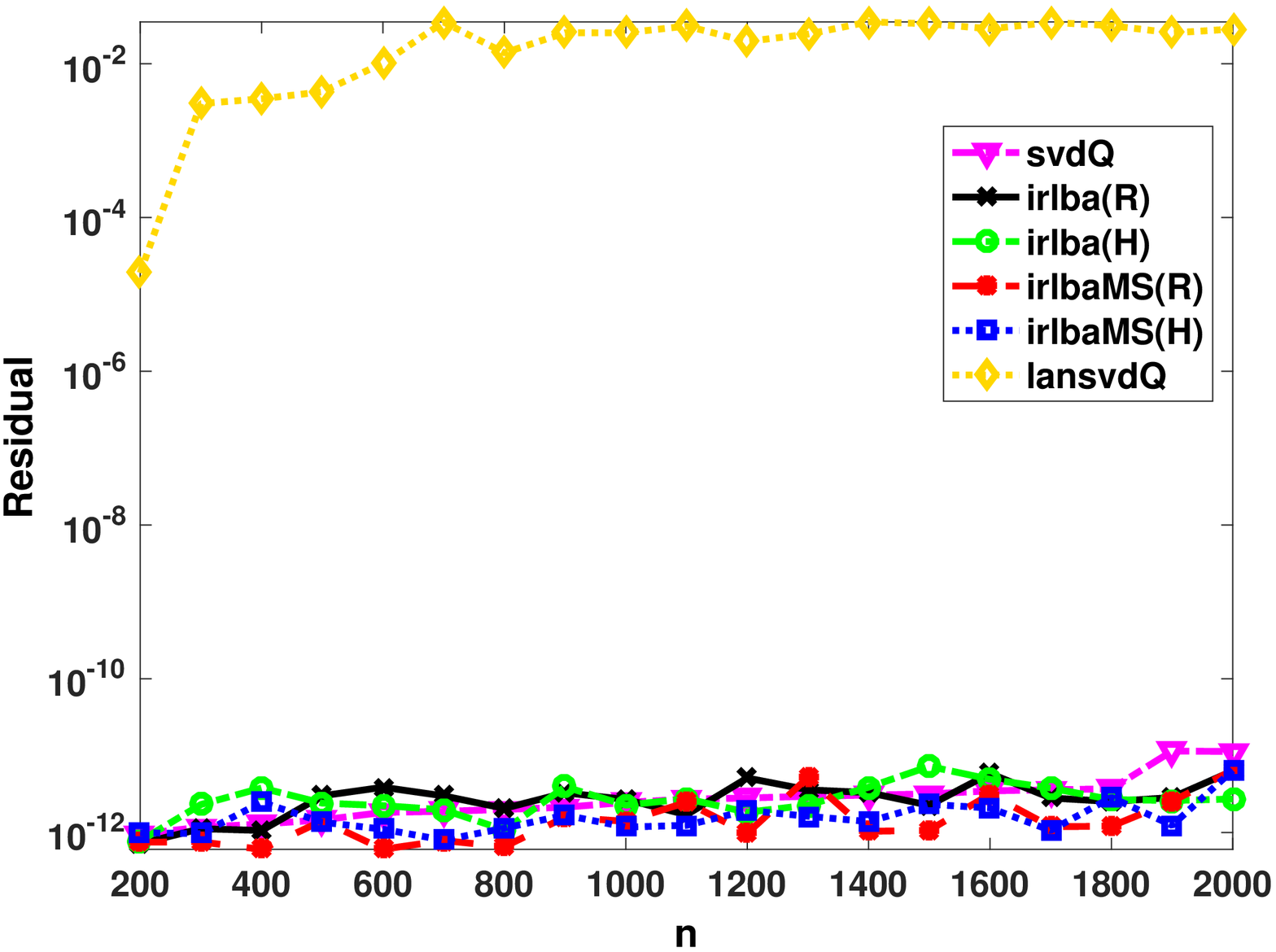}
 \end{center}
 \caption{CPU times (left column) and residuals (right column) on calculating  the first $k$ largest singular triplets. Parameters setting: $m_b=40$ and $k=1$(top row) or $10$ (bottom row).}\label{fig:EX3_large_cpu2residual2}
\end{figure}

 \begin{figure}[!h]
\begin{center}
\includegraphics[height=5cm, width=6cm]{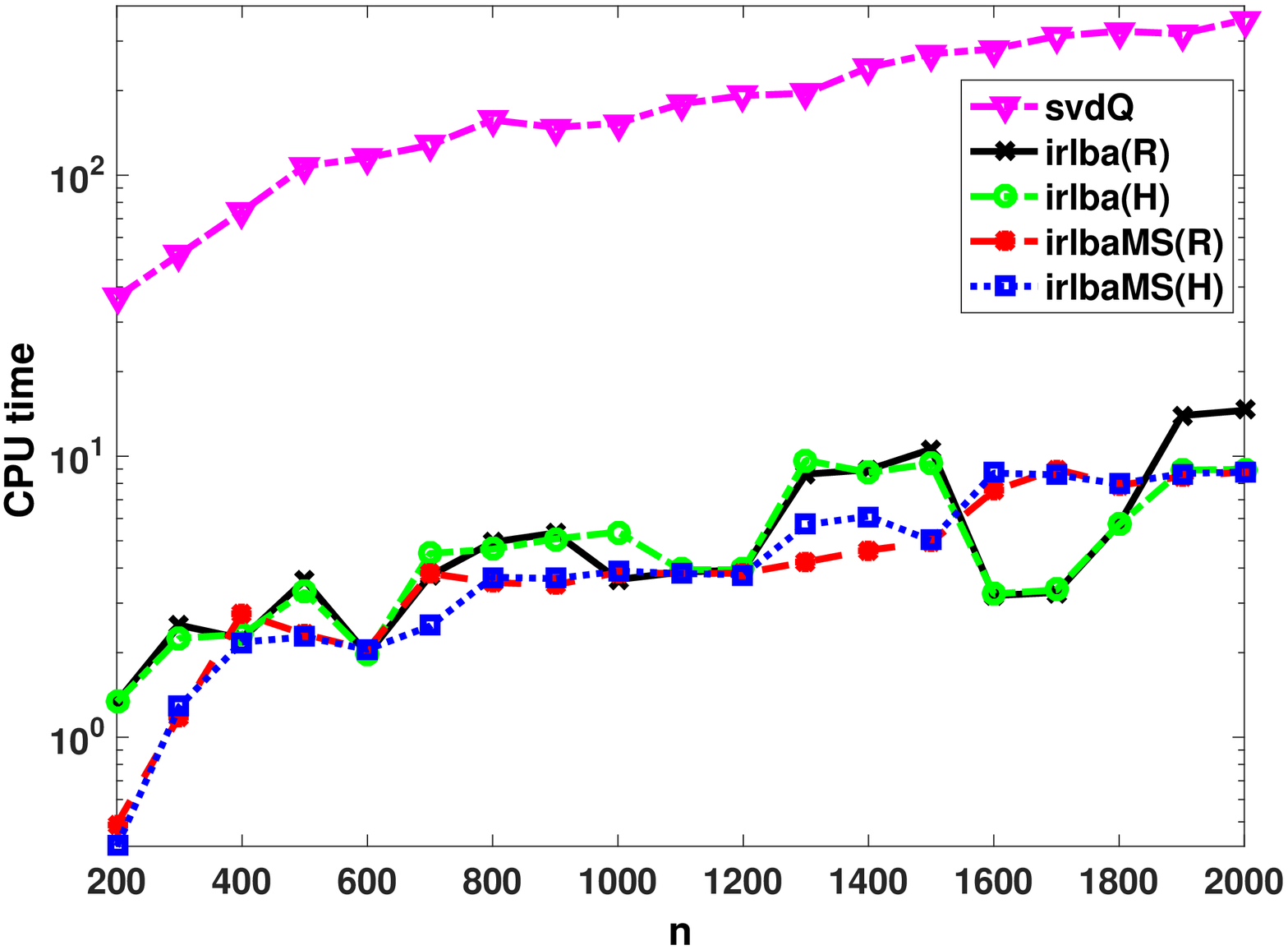}
\includegraphics[height=5cm, width=6cm]{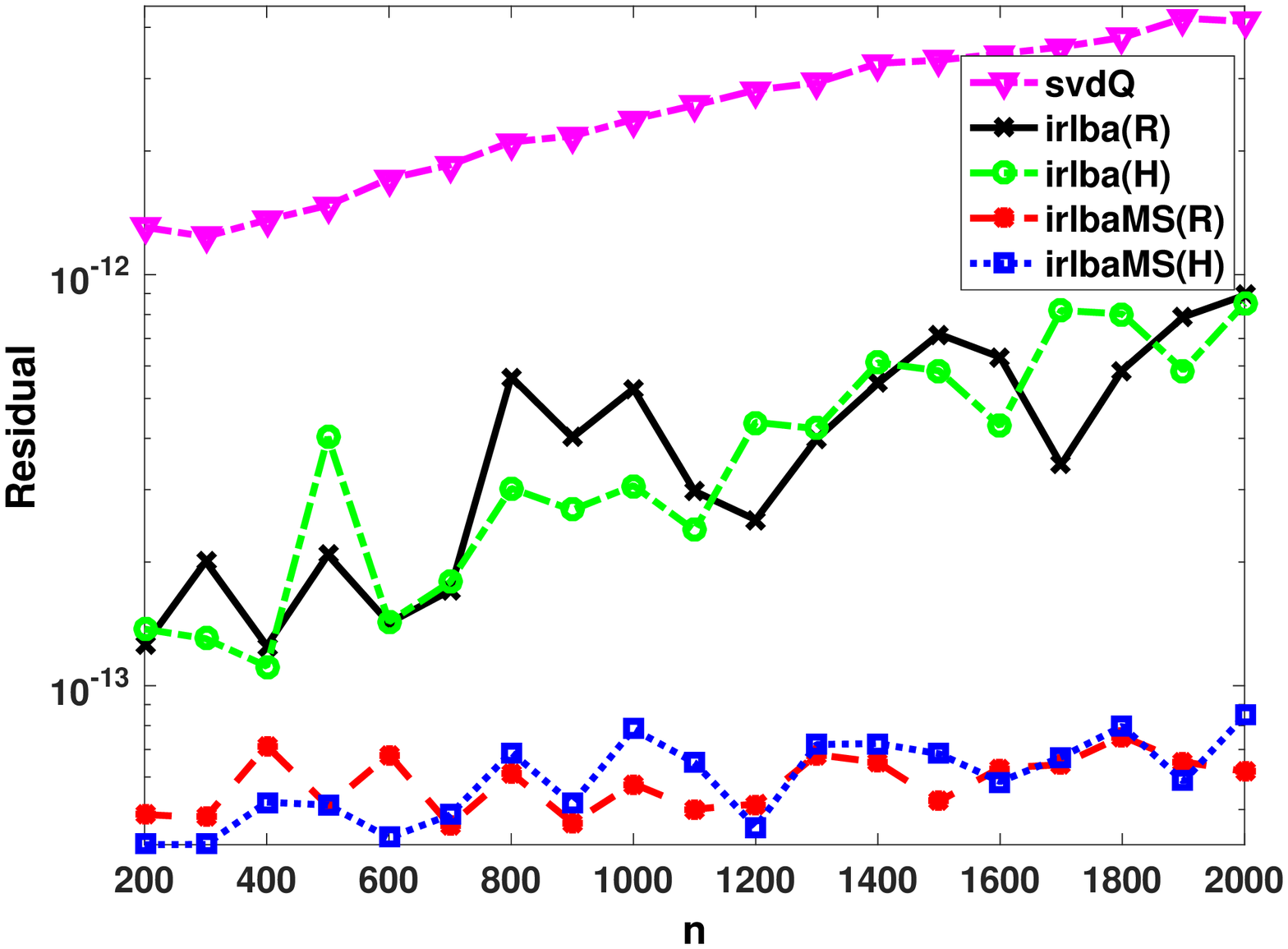}
\includegraphics[height=5cm, width=6cm]{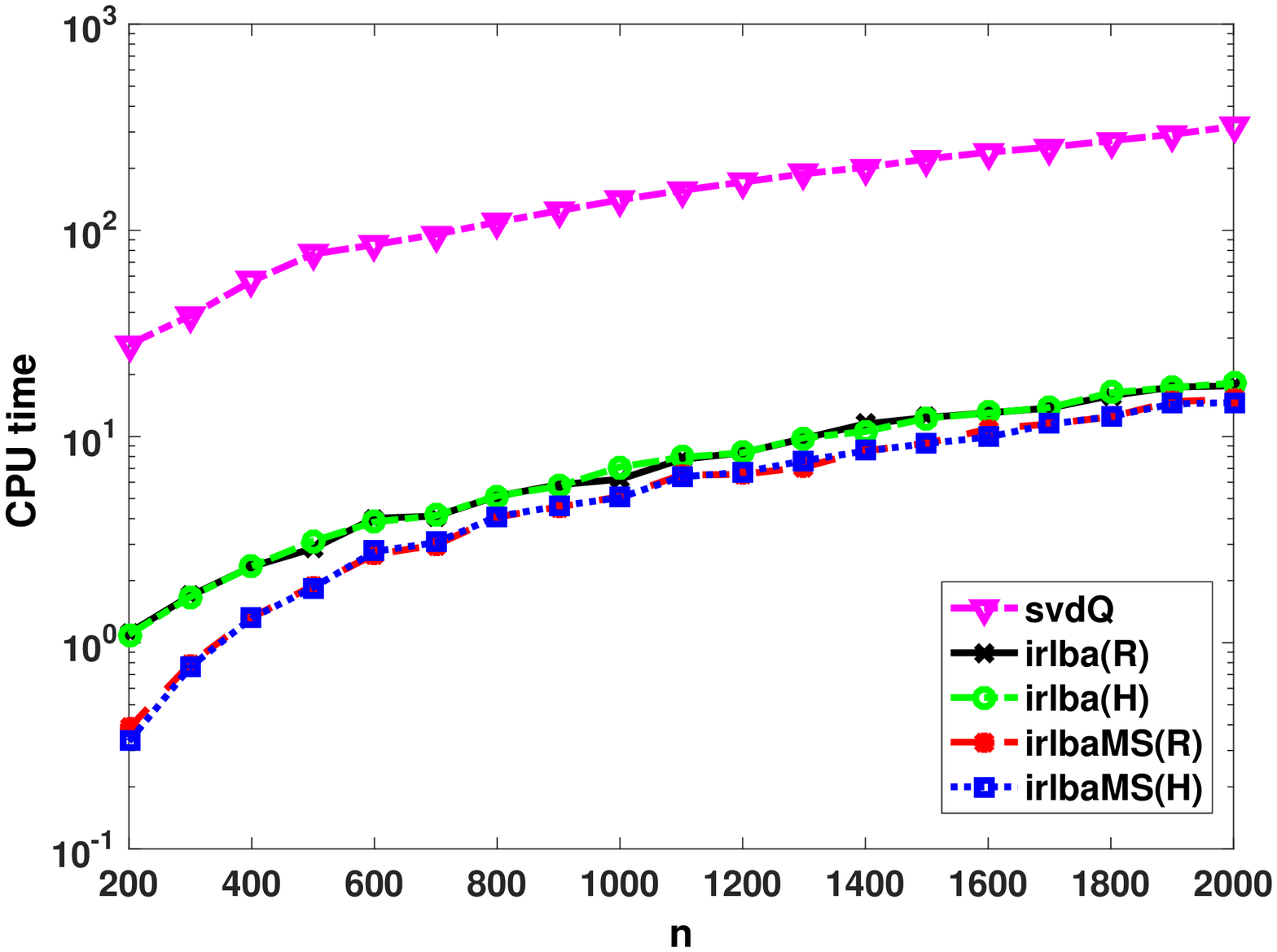}
\includegraphics[height=5cm, width=6cm]{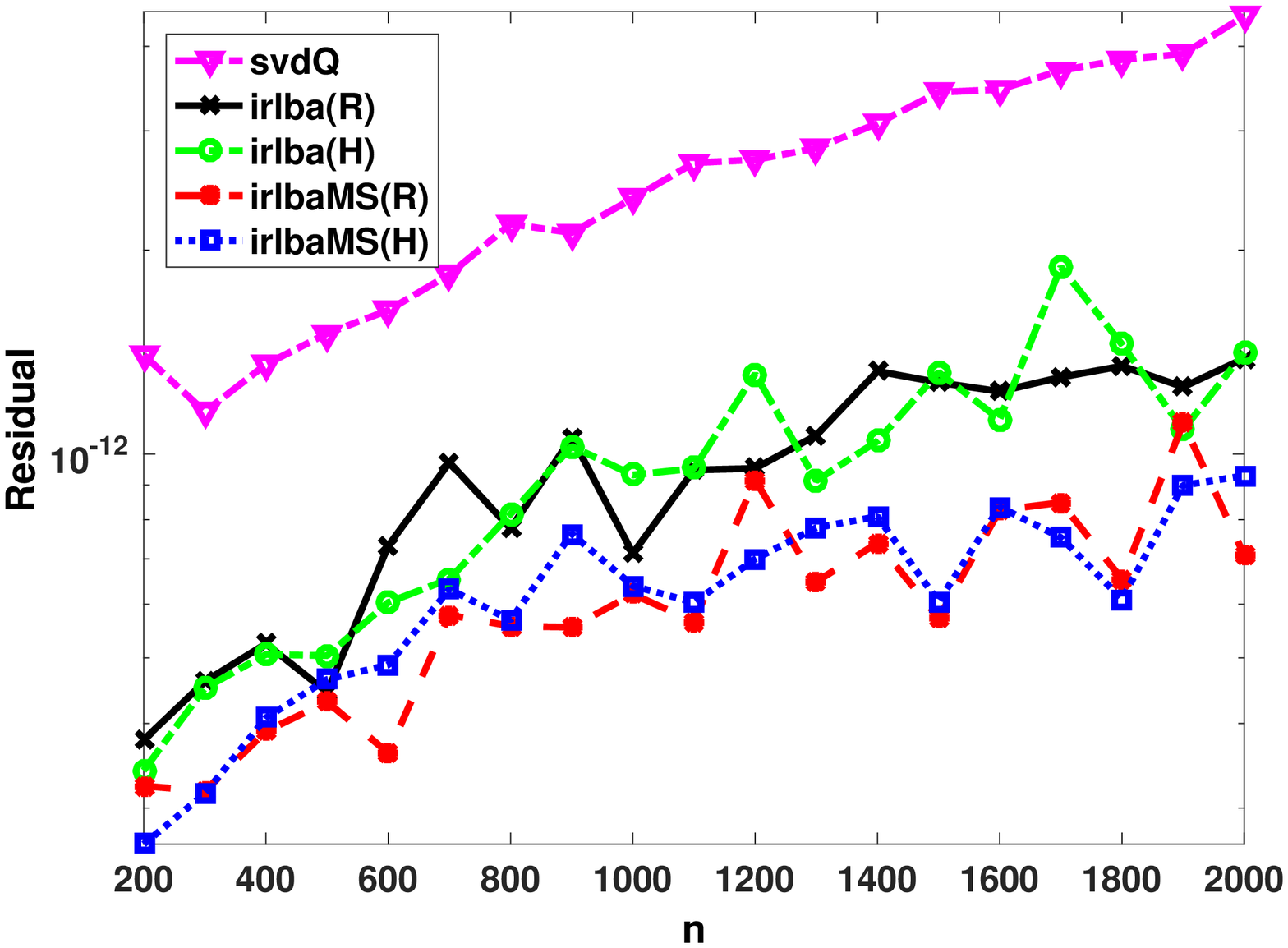}
\end{center}
 \caption{CPU times (left column) and residuals (right column) on calculating  the first $k$ smallest singular triplets. Parameters setting: $m_b=40$ and $k=1$(top row) or $10$ (bottom row).}\label{fig:EX3_small_cpu2residual2}
\end{figure}

\end{example}

\vskip18pt
\begin{example}[Color Face Recognition]\label{ex:cfr}
Color information is one of the most important characteristics in reflecting the structural information of an image. Face recognition performance with color images can be significantly better than that with grey-scale images; see, e.g., \cite{jlz17}.
Suppose that there are $l$ training color image samples,
denoted by $m \times n$ pure quaternion matrices $\Fq_{1}, \Fq_{2},\ldots, \Fq_{l}$, and the average is $\Psi=\frac{1}{l}\sum_{s=1}^{l}\Fq_{s}\in \mathbb{Q}^{m\times m}$. Let
\begin{equation*}
  \Xq=[vec(\Fq_{1})-vec(\Psi),\cdots,vec(\Fq_{l})-vec(\Psi) ],
\end{equation*}
where $vec(\cdot)$ means to stack the columns of a matrix into a single long vector. The
core work of color principal component analysis is to compute the right
singular vectors corresponding to first $k$ largest singular values of $\Xq$. These vectors
are called eigenfaces, and generate the projection subspace, denoted as $\Vq$.

In this experiment, we apply \texttt{lansvdQ, eigQ and irlbaMS(R)}  into color
images principal component analysis, based on the  Georgia Tech face database\footnote{The Georgia Tech face database. http://www.anefian.com/research/facereco.htm}. The setting is same to  \cite{jns19na}.
All images in the Georgia
Tech face database are manually cropped, and then resized to $120\times 120$ pixels. There
are $50$ persons to be used. The first ten face images per individual person are chosen
for training and the remaining five face images are used for testing. The number of
chosen eigenfaces, $k$, increases from 1 to 30. In each case, computing $k$ largest
singular triplets of a  quaternion matrix, $\Xq$, of size $14400\times 500$. Here the $14400$ rows refer to
$120\times120$ pixels and the $500$ columns refer to $50$ persons with $10$ faces each.

The detailed comparison on CPU times and recognition accuracies
of these methods is shown in Figure \ref{fig:EX8}. We see  that \texttt{irlbaMS(R)}
is  faster than other the two algorithms, while the recognition accuracies  are almost
same.
\begin{figure}[!t]
\center\includegraphics[height=5cm, width=12cm]{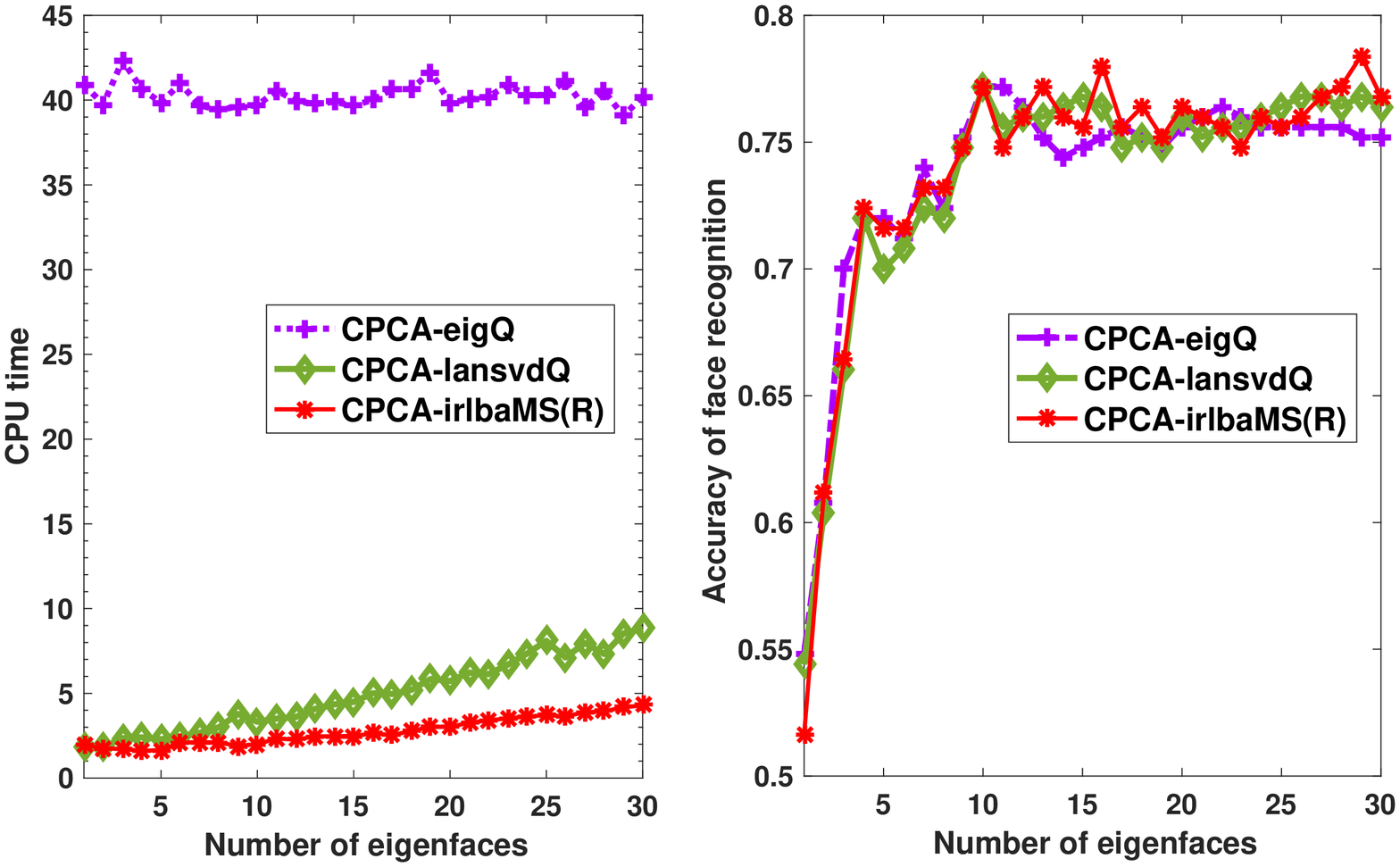}
  \caption{The CPU time and the accuracy by \texttt{lansvdQ, eigQ and irlbaMS(R)}.}\label{fig:EX8}
\end{figure}
\end{example}

\begin{example}[Color Video Compressing and Reconstruction]\label{ex:cv}
All frames of
a color video can be stacked into a quaternion matrix, $\Aq\in \mathbb{Q}^{(lm)\times n}$, where $l$ is
the number of frames, and $m$ and $n$ denote numbers of rows and columns of each
frame, respectively. 
Based on the SVD theory of quaternion matrix, the optimal rank-$k$ approximation to $\Aq$ can be reconstructed from its first $k$ largest singular values and corresponding left/right singular vectors. Denote
such approximation as
\begin{equation*}
  \Aq_{k}=\Uq_{k}S_{k}\Vq_{k}^{*},
\end{equation*}
where $S_{k} = \texttt{diag}(\sigma_{1},\ldots,\sigma_{k})$ consists of the first $k$ largest singular values of $\Aq$, and
the columns of $\Uq_{k}$ and $\Vq_{k}$ are left and right corresponding singular vectors. Then the relative distances from $\Aq_{k}$ to $\Aq$ are
\begin{equation}\label{AK-A}
  \frac{\|\Aq_{k}-\Aq\|_{2}}{\|\Aq\|_{2}}=\frac{\sigma_{k+1}}{\|\Aq\|_{2}},\ \ \frac{\|\Aq_{k}-\Aq\|_{F}}{\|\Aq\|_{F}}=\frac{(\sum_{j=k+1}^{min(lm,n)}\sigma_{j}^{2})^{\frac{1}{2}}}{\|\Aq\|_{F}}.
\end{equation}

In this example, we apply \texttt{lansvdQ},  \texttt{eigQ} and \texttt{irlbaMS(R)} to compute $k$ largest quaternion singular triplets of a
large-scale video quaternion matrix.  
The  direct method \texttt{eigQ}   is firstly used to compute the eigenvalue problem of the Hermitian quaternion matrix  $\Aq^{*}\Aq$.  The eigenvectors of $\Aq^{*}\Aq$ are in fact the  left singular vectors of $\Aq$, saved in a unitary matrix $\Vq$, and the square roots of the eigenvalues are the singular values, saved in a
diagonal matrix $S$. The right singular vectors of $\Aq$ is generated by $\Uq = \Aq\Vq S^{-1}$. Let
$\hat{\Uq}_{k}=\Uq(:,1:k)$, $\hat{S}_{k}=S(1 : k, 1 : k)$, and $\hat{\Vq}_{k}=\Vq(:, 1 : k)$, then we get another
approximation to $\Aq$, $\hat{\Aq}_{k}=\hat{\Uq}_{k}\hat{S}_{k}\hat{\Vq}_{k}$. We expect that these methods can achieve at
the same accuracy, but the costed  CPU time of \texttt{irlbaMS(R)} is shortest. 

Let $\Fq$ and $\Fq_{k}$ denote the original frame and its approximation, respectively. The
peak signal-to-noise ratio (PSNR) value of $F_{k}$ is defined as
\begin{equation*}
  PSNR(F_{k},F)=10*log_{10}(\frac{255^{2}mn}{\|F_{k}-F\|_{F}^{2}})
\end{equation*}
The structural similarity (SSIM) index of $F_{k}$ and F is defined as
\begin{equation*}
  SSIM(F_{k},F)=\frac{(4\mu_{x}\mu_{y}+c_{1})(2\sigma_{xy}+c_{2})}{(\mu_{x}^{2}+\mu_{y}^{2}+c_{1})(\sigma_{x}^{2}+\sigma_{y}^{2}+c_{2})},
\end{equation*}
where $x$ and $y$ denote the vector forms of $F$ and $F_{k}$, respectively, $\mu_{x,y}$ denotes the
average of $x$, $y$, $\sigma_{x,y} $ the variance of $x$, $y$, $\sigma_{xy}$ the covariance of $x$ and $y$, and $c_{1,2}$ are
two constants.

First, we take the color video $yunlonglake.mp4$ provided in \cite{jns19na} for testing the efficiencies
 of \texttt{lansvdQ, eigQ and irlbaMS(R)} on color video compressing and reconstruction. This video consists of 31 frames (each frame is of size 544$\times$960). Second, we test two methods with the color video, \texttt{children.mov}(from \cite{jns19na}), which consists of 20 frames and each
frame is of size 1280$\times$360.  The relative distances defined as in \eqref{AK-A}, PSNR and
SSIM values of the approximations to randomly chosen 10 frames by three methods
are computed and the average values are shown in Table \ref{table:EX9}. The CPU times of
computing $k$ singular triplets by \texttt{lansvdQ, eigQ and irlbaMS(R)} are shown in Figure \ref{fig:demo4}.
We can see that  \texttt{irlbaMS(R)} is the fastest one.

\begin{table}[!t]
\centering
\caption{The average relative distances, PSNR and SSIM values of the approximations of 10 frames with applying $k$ singular triplets ($k = 30$).}
\vskip  15pt
\begin{tabular}{|c|c|c|c|c|c|}
  \hline
  Video & Method & PSNR& SSIM &  \large{$\frac{\|A_{k}-A\|_{2}}{\|A\|_{2}}$} & \large{$\frac{\|A_{k}-A\|_{F}}{\|A\|_{F}}$} \\
  \hline
             & \texttt{eigQ} & 29.5953  &  0.9236 &0.0148  &   0.0620\\
  yunlonglake & \texttt{lansvdQ} &  29.5953 & 0.9236  &  0.0148 &     0.0620\\
            & \texttt{irlbaMS(R)} & 29.5953 & 0.9236  & 0.0148   &    0.0620 \\
  \hline
    & \texttt{eigQ} &  23.4508 & 0.8428  & 0.0273  &  0.1243 \\
  children & \texttt{lansvdQ}& 23.4508  &  0.8428 & 0.0273  & 0.1243   \\
    & \texttt{irlbaMS(R)} & 23.4508& 0.8428  &  0.0273&  0.1243   \\
  \hline
\end{tabular}
\label{table:EX9}
\end{table}

\begin{figure}[!t]
\center \includegraphics[height=5cm,width=12cm]{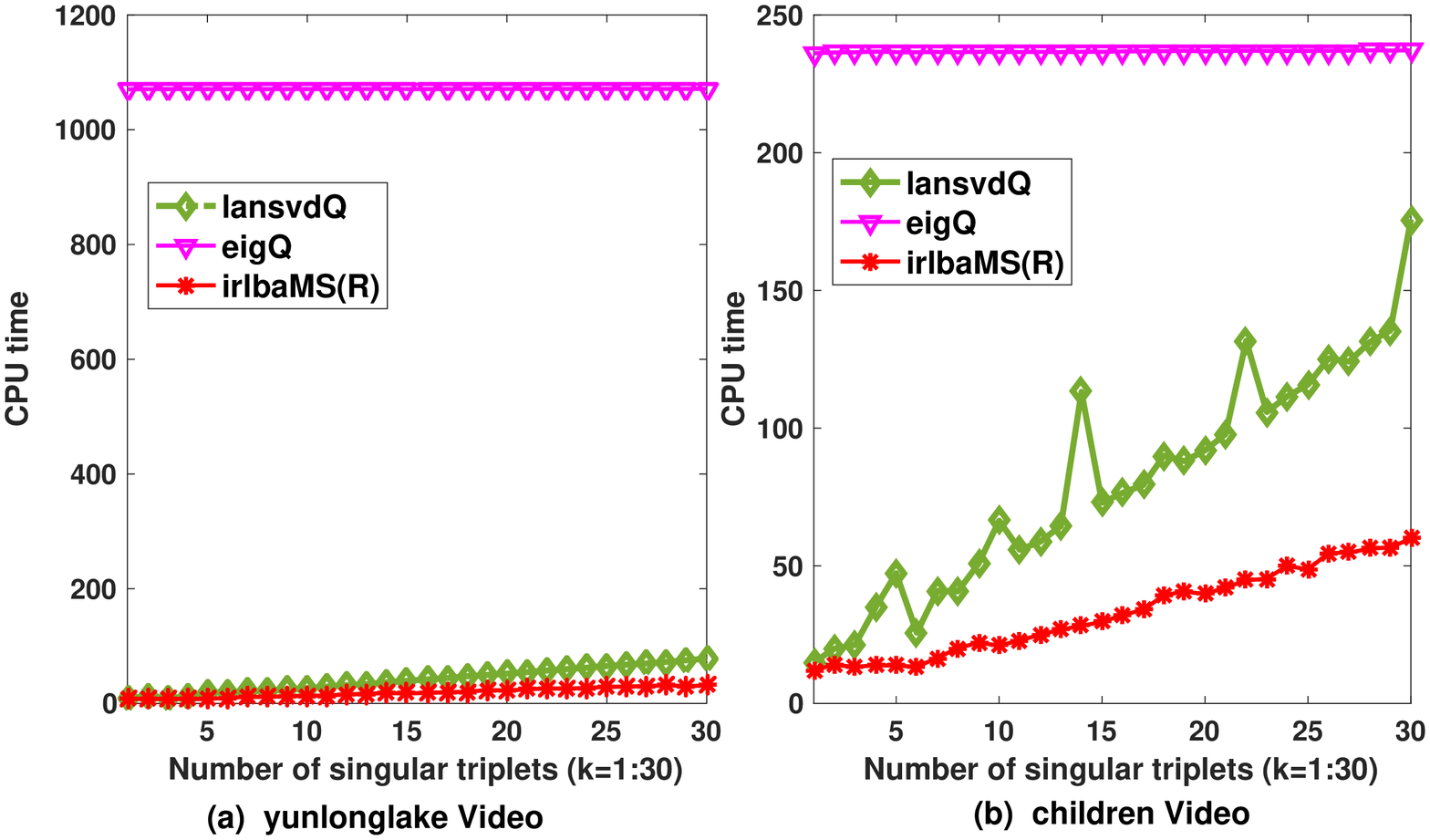}
  \caption{The computational CPU times of $k$ singular triplets by \texttt{lansvdQ, eigQ and irlbaMS(R)}.
x-axis: the number of singular triplets; y-axis: CPU times.}\label{fig:demo4}
\end{figure}

\end{example}

\section{Conclusion}\label{s:clu}
In this paper, a multi-symplectic Lanzcos  method is proposed and applied to compute the $k$ largest and smallest singular triplets of JRS-symmetric matrix. 
The augmented Ritz  and  harmonic Ritz vector are applied respectively to perform implicitly restarting to obtain a satisfactory bidiagonal matrix. Two new associated algorithms are presented: \texttt{irlbaMS(R)}    and \texttt{irlbaMS(H)}.  \texttt{irlbaMS(H)} is the first reliable algorithm of computing $k$ smallest singular triplets to the best of our knowledge. 
The proposed multi-symplectic algorithms  performs better than the state-of-the-art algorithms in  numerical experiments.

\section*{Acknowledgments}
We are grateful to Prof. Michael K. Ng from The University of Hong Kong for his helpful suggestions to the  primary version of this paper.

\bibliographystyle{siamplain}

\end{document}